\theoremstyle{plain}
\newtheorem{theorem}{Theorem}[section]
\newtheorem{lemma}[theorem]{Lemma}
\newtheorem{claim}[theorem]{Claim}
\newtheorem{corollary}[theorem]{Corollary}
\newtheorem{proposition}[theorem]{Proposition}
\newtheorem*{theorem*}{Theorem}
\theoremstyle{definition}
\newtheorem{definition}[theorem]{Definition}
\newtheorem{example}[theorem]{Example}
\newcommand{\BIGOP}[1]{\mathop{\mathchoice%
{\raise-0.22em\hbox{\huge $#1$}}%
{\raise-0.05em\hbox{\Large $#1$}}{\hbox{\large $#1$}}{#1}}}
\newcommand{\bigtimes}{\BIGOP{\times}}
\newcommand\RP{\mathbb{R}{\rm P}}
\newcommand{\RR}{\mathbb{R}}
\newcommand{\CC}{\mathbb{C}}
\newcommand{\NN}{\mathbb{N}}
\newcommand{\ZZ}{\mathbb{Z}}
\newcommand{\FF}{\mathbb{F}}
\newcommand{\BB}{\mathrm{B}}
\newcommand{\EE}{\mathrm{E}}
\newcommand\Sym{\mathfrak S}
\newcommand\C{\mathfrak C}
\newcommand{\dimaff}{\dim_{\affine}}
\newcommand{\cat}{\operatorname{cat}}
\newcommand{\secat}{\operatorname{secat}}
\newcommand{\hofib}{\operatorname{hofib}}
\newcommand{\id}{\operatorname{id}}
\newcommand{\colim}{\operatorname{colim}}
\newcommand{\affine}{\operatorname{aff}}
\newcommand{\h}{\operatorname{\mathfrak h}}
\begin{document}

\title[On complex highly regular embeddings]{On complex highly regular embeddings and the extended Vassiliev conjecture}


\author[Blagojevi\'c]{Pavle V. M. Blagojevi\'{c}} 
\thanks{The research by Pavle V. M. Blagojevi\'{c} leading to these results has
        received funding from the Leibniz Award of Wolfgang L\"uck granted by DFG.
        Also supported by the grant ON 174008 of the Serbian Ministry of Education and Science.}
\address{Mathemati\v cki Institut SANU, Knez Mihailova 36, 11001 Beograd, Serbia}
\email{pavleb@mi.sanu.ac.rs} 
\curraddr{Institut f\"ur Mathematik, FU Berlin, Arnimallee 2, 14195 Berlin, Germany}
\email{blagojevic@math.fu-berlin.de}
\author[Cohen]{Frederick R. Cohen}
\thanks{The research by Frederick R. Cohen leading to these results has received funding from University of Rochester and 
Institute for Mathematics and its Applications (IMA) of University of Minnesota.}
\address{Department of Mathematics, University of Rochester, Rochester, NY 14625, U.S.A.}
\email{cohf@math.rochester.edu} 
\author[L\"uck]{Wolfgang L\"uck} 
\thanks{The research by Wolfgang L\"uck leading to these results has received funding from a Leibniz
        Award granted by  DFG}
\address{Mathematisches Institut der Universit\"at Bonn, Endenicher Allee 60, 53115 Bonn, Germany} 
\email{wolfgang.lueck@him.uni-bonn.de} 
\author[Ziegler]{G\"unter M. Ziegler} 
\thanks{The research by G\"unter M. Ziegler leading to these results has received funding from the 
        European Research Council under the European Union's Seventh Framework Programme (FP7/2007-2013) / ERC   Grant agreement no.~247029-SDModels.}  
\address{Institut f\"ur Mathematik, FU Berlin, Arnimallee 2, 14195 Berlin, Germany} 
\email{ziegler@math.fu-berlin.de} 
\date{March 12, 2015; revised October 27, 2015} 



\dedicatory{Dedicated to the memory of Samuel Gitler}

\maketitle

\begin{abstract}
A continuous map $\CC^d\longrightarrow\CC^N$ is a complex $k$-regular embedding if any $k$ pairwise distinct points in $\CC^d$ are mapped by $f$ into $k$ complex linearly independent vectors in $\CC^N$.
The existence of such maps is closely connected with classical problems of algebraic/differential topology, such as embedding/immersion problems.
Our central result on complex $k$-regular embeddings extends results of Cohen \& Handel (1978), Chisholm (1979) and  Blagojevi\'c, L\"uck \& Ziegler (2013) on real $k$-regular embeddings. 
We give the following lower bounds for the existence of complex $k$-regular embeddings
{\em 
\begin{compactitem}[~~]
\item Let $p$ be an odd prime, $k\geq1$ and $d=p^t$ for $t\geq1$. 
         If there exists a complex $k$-regular embedding $\CC^d\longrightarrow\CC^N$, then $ d(k-\alpha_p(k))+\alpha_p(k)\leq N$. Here $\alpha_p(k)$ denotes the sum of coefficients in the $p$-adic expansion of $k$.
\end{compactitem}}
\noindent
These lower bounds are obtained by modifying the framework of Cohen \& Handel (1978) and a study of Chern classes of complex regular representations. 
As a main technical result we establish for this an extended Vassiliev conjecture, the following upper bound for the height of the cohomology of an unordered configuration space:
{\em
\begin{compactitem}[~~]
\item If $d\geq 2$ and $k\geq 2$ are integers, and $p$ is an odd prime. Then 
\[
 \h(H^*(F(\RR^d,k)/\Sym_k;\FF_p))\leq\min\{ p^t: 2p^t\geq d \}.
\]
\end{compactitem}}

\noindent
Furthermore,  we give similar lower bounds for the existence of complex $\ell$-skew embeddings $\CC^d\longrightarrow\CC^N$, for which we require that the images of the tangent spaces at any $\ell$ distinct points are skew complex affine subspaces of $\CC^N$.

\noindent
In addition we give improved lower bounds for the Lusternik--Schnirelmann category of $F(\CC^d,k)/\Sym_k$ as well as for the sectional category of the covering $F(\CC^d,k)\longrightarrow F(\CC^d,k)/\Sym_k$.
\end{abstract}


\section{Introduction}

Let $V$ be a finite dimensional vector space over the field of real or of complex numbers.

For $X$ a topological space and $k\ge1$, 
a continuous map  $f\colon X\longrightarrow V$ is a {\em  real (complex) $k$-regular embedding} if any $k$ pairwise distinct points in $X$  are mapped to 
$k$ vectors that are linearly independent in the real (complex) vector space $V$.
 
For $M$ a smooth complex $d$-dimensional manifold and $\ell\ge1$,
a smooth complex embedding $f \colon M\longrightarrow\CC^N$ is a \emph{complex $\ell$-skew embedding} if for every collection $y_1,\ldots,y_{\ell}$ of pairwise distinct points in $M$ the complex affine subspaces
   \[
   (\iota\circ df_{y_1})(T_{y_1}M),\ldots ,(\iota\circ df_{y_{\ell}})(T_{y_{\ell}}M)
   \]
   of $\CC^N$ are affinely independent. Here $df\colon TM\longrightarrow T\CC^N$ is the complex differential map induced by $f$, and 
$ \iota \colon T\CC^N \longrightarrow \CC^N$
maps a tangent vector $v \in T_x\CC^N$ at $x \in \CC^N$
 to  $x +v$ where we use the standard identification $T_x\CC^N = \CC^N$.

\smallskip
The study of real $k$-regular embeddings was initiated by Borsuk in 1957 \cite{borsuk}. 
The first non-trivial lower bound for the existence of real $2k$-regular embeddings between real vector spaces, obtained by dimension count, was given by Boltjanski{\u\i}, Ry{\v{s}}kov \& {\v{S}}a{\v{s}}kin in 1963~\cite{boltjanskii-ryskov-saskin}:
\begin{compactitem}[~~~]
\item{\em  If a real $2k$-regular  embedding $\RR^d\longrightarrow\RR^N$ exists, then $(d+1)k\leq N$.}
\end{compactitem}
In 1978  F. Cohen and Handel \cite{cohen-handel} related the problem of the existence of a real $k$-regular embedding $X\longrightarrow\RR^N$ with the existence of an $\Sym_k$-equivariant map between the configuration space $F(X,k)$ and the Stiefel manifold $V_k(\RR^N)$ of all $k$-frames in $\RR^N$.
The key equivalence between the existence of $\Sym_k$-equivariant maps $F(X,k)\longrightarrow V_k(\RR^N)$ and the existence of a particular inverse of the regular representation vector bundle over $F(X,k)/\Sym_k$ allowed them to employ methods of algebraic topology.
They proved that:
\begin{compactitem}[~~~]
\item{\em  If a real $k$-regular  embedding $\RR^2\longrightarrow\RR^N$ exists, then $2k-\alpha(k)\leq N$, where $\alpha(k)$ denotes the number of ones in the dyadic expansion of $k$.}
\end{compactitem}
In 1979 Chisholm \cite[Thm.\,2]{chisholm} extended the result of Cohen \& Handel as follows:
\begin{compactitem}[~~~]
\item {\em Let $d$ be a power of $2$ and $k\geq 1$. 
                  If a real $k$-regular  embedding $\RR^d\longrightarrow\RR^N$ exists, then $d(k-\alpha (k))+\alpha (k)\leq N$.}
\end{compactitem}
In 1980's and 1990's the importance of the existence of real $k$-regular embeddings for approximation theory motivated further studies of 
Handel~\cite{handel,handel-segal,handel96}  and Vassiliev~\cite{vassiliev92book,vassiliev92,vassiliev96}.

\noindent
Only recently, in \cite[Thm.\,2.1]{blagojevic-luck-ziegler-2}, the restriction on the dimension in Chisholm's 1979 result has been removed:
\begin{compactitem}[~~~]
\item {\em Let $d\geq2$ and $k\geq1$. If a real $k$-regular  embedding $\RR^d\longrightarrow\RR^N$ exists, then $d(k-\alpha (k))+\alpha (k)\leq N$.}
\end{compactitem}
The methods employed for this extension of Chisholm's result also led to a considerable improvement of the lower bounds for the existence of real $\ell$-skew and $k$-regular-$\ell$-skew embeddings, originally studied by Ghomi \& Tabachnikov \cite{ghomi-tabachnikov} (case $\ell=2$) and Stojanovi\'c \cite{stojanovic}.

\smallskip
In this paper, following the setup of Cohen \& Handel from 1987, we study the existence of complex $k$-regular and $\ell$-skew embeddings. 
Our first two new results, Theorems~\ref{theorem_0} and \ref{theorem:Main-02}, are consequences of our  results on real $k$-regular and $\ell$-skew embeddings \cite{blagojevic-luck-ziegler-2}:
\begin{compactitem}[~~~]
\item
\begin{compactenum}[\rm a.]
\item Let $d\geq 1$ and $k\geq 1$ be integers. 
If a complex $k$-regular embedding $\RR^d\longrightarrow\CC^N$ exists, then $N\geq \tfrac{1}{2}(d(k-\alpha(k))+\alpha(k))$.
\item Let $d\geq 1$ and $\ell\geq 1$ be integers. 
If a complex  $\ell$-skew embedding $\CC^d\longrightarrow\CC^{N}$ exists, then 
  $N\geq \tfrac{1}{2}\big( (2^{\gamma(d)+1}-2d-1)(\ell-\alpha(\ell))+2(d+1)\alpha(\ell)-\ell\big)-1$,
  where  $\gamma(d)=\lfloor\log_2d\rfloor+1$.
\end{compactenum}
\end{compactitem}
Using a result of Cohen from 1976~\cite[Vanishing theorem 8.2]{cohen}~\cite[Thm.\,6.1]{blagojevic-luck-ziegler-1}, we prove in Theorems~\ref{theorem_1} and \ref{theorem:Main-2}:
\begin{compactitem}[~~~]
\item 
\begin{compactenum}[\rm A.]
\item {\em Let $d\geq 1$ be an integer, and let $p$ be an odd prime. If a complex $p$-regular embedding $\RR^d\longrightarrow\CC^N$ exists, then $\lfloor \tfrac {d+1}{2}\rfloor (p-1)+1\leq N$.
\item  Let $d\geq1$ be an integer and let $\ell\geq 3$ be a prime.  
If a complex  $\ell$-skew embedding $\CC^d\longrightarrow\CC^{N}$ exists, then  $(\ell-1)(d+f(d,\ell)+1)+d\leq N$, where 
  $f(d,\ell):=\max\{k:0\leq k\leq d-1\textrm{ and }\ell \nmid{d+k\choose d} \}$.}
\end{compactenum}
\end{compactitem}
In \cite{buczynski et al} the existence of 
real and complex $k$-regular maps $\RR^d \longrightarrow \mathbb{R}^N$ and $\mathbb{C}^d \longrightarrow \mathbb{C}^N$ is proven for $N = dk-1$ and any 
$k$, or for $N=d(k-1)+1$ and $k\leq 9$.

A key technical result of this paper -- the extended Vassiliev conjecture, 
Theorem \ref{theorem:hight_bound_for_prime_2} for $p=2$ and Theorem \ref{theorem:hight_bound_for_odd} for odd primes -- gives estimates for the height of the cohomology algebra of the unordered configuration space $F(\RR^d,k)/\Sym_k$ with the coefficients in the field $\FF_p$:
\begin{compactitem}[~~~]
\item 
\begin{compactenum}[\rm 1.]
\item Let $d\geq 2$ and $k\geq 2$ be integers.
Then
\[
\h(H^{*}(F(\RR^d,k)/\Sym_k;\FF_2))\leq \min\{ 2^t: 2^t\geq d \}.
\]

\item {\em Let $d\geq 2$ and $k\geq 2$ be integers, and let $p$ be an odd prime. Then 
\[
 \h(H^*(F(\RR^d,k)/\Sym_k;\FF_p))\leq\min\{ p^t: 2p^t\geq d \}.
\]
 
}
\end{compactenum}
\end{compactitem}
In particular, for $p=2$ and $d$ power of two part 1. of these results settle a conjecture by
Vassiliev \cite[Conj.\,2,\,pp.\,210]{vassiliev92}~\cite[Conj.\,2,\,pp.\,75]{vassiliev92book}.

Using these estimates as a black box, in Theorems~\ref{theorem_2}  and \ref{theorem:Main-3} we prove the following Chisholm-like lower bounds for the existence of $k$-regular and $\ell$-skew embeddings over $\CC$:
\begin{compactitem}[~~~]
\item 
\begin{compactenum}[\rm i.]
{\em 
\item Let $p$ be an odd prime, $k\geq1$ and $d=p^t$ for $t\geq1$. 
         If there exists a complex $k$-regular embedding $\CC^d\longrightarrow\CC^N$, then $ d(k-\alpha_p(k))+\alpha_p(k)\leq N$. Here $\alpha_p(k)$ denotes the sum of coefficients in the $p$-adic expansion of $k$.

\item Let $p$ be an odd prime, $\ell\geq1$ and $d=p^t$ for $t\geq1$.   
          If there exists a complex  $\ell$-skew embedding $\CC^d\longrightarrow\CC^{N}$, then $(d-1)(\ell-\alpha_p(\ell))+(d+1)\ell-1\leq N$.
}
\end{compactenum}
\end{compactitem}

In addition, in Corollaries~\ref{cor:LS-category} and \ref{cor:secat-category} we present improved lower bounds for the Lusternik--Schnirelmann category of  $F(\CC^d,k)/\Sym_k$ as well as for the sectional category of the covering $F(\CC^d,k)\longrightarrow F(\CC^d,k)/\Sym_k$:
\begin{compactitem}[~~~]
\item 
\begin{compactenum}[\rm I.]
{\em 
\item $\cat (F(\CC^d,k)/\Sym_k)\geq \max\{2(d-1)(k-\alpha_p(k)): p\text{ is a prime}\}$. 
\item  Let $d\geq 1$ be an integer and $k$ be a power of an odd prime $p$. Then
\[
 (2d-2)(k-1)\leq\secat\big(F(\CC^d,k)\longrightarrow F(\CC^d,k)/\Sym_k\big)\leq (2d-1)(k-1).
\]
}
\end{compactenum}
\end{compactitem}
These questions were extensively studied from different aspects by Vassiliev~\cite{vassiliev96}, De Concini, Procesi \& Salvetti \cite{deConcini-procesi-salvetti}, Arone~\cite{arone2006}, and Roth~\cite{Roth}.

Using the methods of this paper one can easily derive lower bounds for the existence of complex $k$-regular-$\ell$-skew embedding as well as complex analogues of results by Vassiliev \cite{vassiliev92,vassiliev96}.

Finally, let us note that the existence of $k$-regular embeddings and $\ell$-skew maps (in the real and in the complex versions) is closely related to a number of hard, classical algebraic/differential topology problems, 
such as the generalized vector field problem, immersion problems for projective spaces, and the existence of non-singular bilinear forms,
see Ghomi \& Tabachnikov \cite{ghomi-tabachnikov}.

\medskip
\noindent
{\bf Acknowledgments.} The Institute for Mathematics and its Applications (IMA) of University of Minnesota provided a perfect environment for work on  this paper.
We are grateful to Anthony Bahri, Martin Bendersky, Mark Grant, Peter Landweber, John McCleary, and  Yuliya Semikina for their useful comments,
hospitality and support. 
We also thank the anonymous referee for his careful work.

\section{Some ingredients}

In this section we collect facts that will be used in the rest of the paper, in particular in the proofs of Theorems~\ref{theorem:hight_bound_for_prime_2} and~\ref{theorem:hight_bound_for_odd}.
For this we
\begin{compactitem}
\item recall the relationship between $\colim_{k\geq 0}F(\RR^d,k)/\Sym$ and $\Omega^d_0 S^d$, Section~\ref{sec:Unordered configuration space and iterated loop spaces};
\item summarize relevant properties of the Araki--Kudo--Dyer--Lashof homology operations, Section~\ref{sec:Araki--Kudo--Dyer--Lashof operations};
\item introduce the space $Q(X):=\colim_{d\geq 0}\Omega^d\Sigma^d(X)$ and describe its homology in the language of Araki--Kudo--Dyer--Lashof operations, Section~\ref{sec:The space $Q(X)$};
\item study the height of $H_*(Q(F(\RR^d,p)/\Sym_p);\FF_p)$ when $p$ is an odd prime, Section~\ref{sec:On the homology and cohomology of unordered configuration space}; and finally
\item prove a comparison lemma for the heights in an exact sequence of Hopf algebras, Section~\ref{sec:Height comparison lemma for Hopf algebras}.
\end{compactitem}

\medskip
Let $A$ be an algebra over a field $\FF$.
The {\em height of an element} $a\in A{\setminus}A^*$ is a natural number or infinity:
\[
\h(a):=\min \{n\in\NN : a^n=0\},
\]
where $A^*$ denotes the subgroup of all invertible elements of the algebra $A$.
The {\em height of the algebra} $A$ is defined to be
\[
\h(A):=\max \{\h(a):a\in A{\setminus}A^*\}.
\]

\subsection{Unordered configuration space and iterated loop spaces}
\label{sec:Unordered configuration space and iterated loop spaces}

The first ingredient we need is the following result by Cohen, May and 
Segal~\cite[Cor.\,3.3, taking $X=S^0$]{cohen}~\cite{may}~\cite[Thm.\,3]{segal}.
Consider the iterated loop space
\[
 \Omega^d S^d:=\{f\colon S^d\longrightarrow S^d : f\text{ a continuous map}, f(*)=*\},
\]
where $*\in S^d$ denotes the base point.
It decomposes into connected components with respect to the degree of a map $f\in  \Omega^d S^d$, 
i.e., $\Omega^d S^d=\coprod_{q\in\ZZ}\Omega^d_{q} S^d$, where
\[
\Omega^d_{q} S^d:=\{f\in\Omega^d S^d : \deg(f)=q\}.
\]

\begin{theorem}\label{theorem:tools-1}
For every integer $d\geq 1$ there is a continuous map
\[
\alpha_{d}\colon\colim_{k\geq 0}F(\RR^d,k)/\Sym_k\longrightarrow\Omega^d_0 S^d 
\]
that induces an isomorphism in homology
\[
(\alpha_{d})_*\colon H_*(\colim_{k\geq 0}F(\RR^d,k)/\Sym_k;R)
\longrightarrow
H_*(\Omega^d_0 S^d;R),
\]
with any simple coefficients $R$.\newline
Furthermore, for every $\ell\geq 0$ there is a commutative diagram:
\[
\xymatrix@!C
{
\colim_{k\geq0}F(\RR^d,k)/\Sym_k\ar[r]^-{\alpha_{d}}\ar@{^{(}->}[d]& \Omega^d_0 S^d\ar[d]\\
\colim_{k\geq0}F(\RR^{d+\ell},k)/\Sym_k\ar[r]^-{\alpha_{d+\ell}}\ar@{^{(}->}[d]& \Omega^{d+\ell}_0 S^{d+\ell}\ar[d]\\
\colim_{k\geq0}F(\RR^\infty,k)/\Sym_k\ar[r]^-{\alpha_{\infty}}& \Omega^\infty_0 S^\infty,
}
\]
where $\Omega^\infty_0 S^\infty:=\colim_{d\geq 0}\Omega^d_0 S^d$.

\noindent
Finally, for a prime $p$, the composition of rightmost vertical maps
\[
\Omega^d_0 S^d\longrightarrow\Omega^\infty_0 S^\infty
\]
induces a monomorphism in homology with coefficients in $\FF_p$ if and only if it induces an
epimorphism in cohomology with coefficients in $\FF_p$ if and only if
\begin{compactenum}[\rm (1)]
\item $p=2$, or
\item $p>2$ and $d$ is odd.
\end{compactenum}
\end{theorem}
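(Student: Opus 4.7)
The plan is to assemble this result from three classical ingredients: May's~\cite{may} and Segal's~\cite{segal} little $d$-cubes model for iterated loop spaces, the group completion theorem, and F.~Cohen's computation of $H_*(\Omega^d S^d;\FF_p)$ via Araki--Kudo--Dyer--Lashof operations~\cite{cohen}. I would first construct $\alpha_d$ via May's approximation theorem: the little $d$-cubes operad $\C_d$ acts on every $d$-fold loop space, and evaluating the free $\C_d$-algebra on a point at the canonical ``degree $1$'' map into $\Omega^d_1 S^d$ produces, for each $k$, a continuous map $F(\RR^d,k)/\Sym_k\longrightarrow\Omega^d_k S^d$. The stabilization inclusions $F(\RR^d,k)/\Sym_k\hookrightarrow F(\RR^d,k+1)/\Sym_{k+1}$, obtained by adjoining a point at infinity, correspond under this construction to Pontryagin multiplication by a fixed degree-$1$ element; identifying path components via this translation lands the colimit in the basepoint component $\Omega^d_0 S^d$, and defines $\alpha_d$.

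For the homology isomorphism I would apply the group completion theorem to the grouplike $H$-space $\colim_k F(\RR^d,k)/\Sym_k$. Under May's equivalence, $\coprod_k F(\RR^d,k)/\Sym_k$ is a model for the free $\C_d$-algebra on a point, which after group completion has the homotopy type of $\Omega^d S^d$; localizing the Pontryagin ring then identifies the colimit over $k$ with $\Omega^d_0 S^d$ on the level of homology with any simple coefficients. Commutativity of the diagram in the second part is then immediate from the naturality of the little cubes operad under the inclusion $\RR^d\hookrightarrow\RR^{d+\ell}$, which simultaneously induces the configuration space inclusion on the left and the suspension map $\Omega^d S^d\to\Omega^{d+\ell}S^{d+\ell}$ on the right.

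The delicate and main part of the argument is the final statement characterizing when the map $H_*(\Omega^d_0 S^d;\FF_p)\to H_*(\Omega^\infty_0 S^\infty;\FF_p)$ is injective. Here I would invoke Cohen's description of $H_*(\Omega^d S^d;\FF_p)$ as a free graded commutative algebra on iterated Araki--Kudo--Dyer--Lashof monomials $Q^I[\iota]$ applied to the fundamental class $[\iota]\in H_*(\Omega^d_1 S^d)$, subject to admissibility and to an excess bound determined by $d$. Stabilization to $\Omega^\infty_0 S^\infty$ merely discards the excess bound, so injectivity in $\FF_p$-homology amounts to checking that no ``unstable'' monomial coincides with a stably valid one. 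For $p=2$ this is automatic; for odd $p$ with $d$ odd, the parities of $[\iota]$ and of the odd-primary operations $Q^s$ and $\beta Q^s$ again prevent such coincidences; but for odd $p$ with $d$ even, the parity constraint forces a low-degree primitive class produced by $\beta Q^{d/2}$ to be stably defined yet unstably excluded, giving a nontrivial kernel. The equivalence with surjectivity on cohomology then follows by $\FF_p$-duality and finite type.

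The main obstacle I expect is the bookkeeping in the last step: tracking precisely the parity and excess conditions on the Araki--Kudo--Dyer--Lashof monomials that distinguish the stable from the unstable range, for which I would rely directly on the relevant chapters of~\cite{cohen}.
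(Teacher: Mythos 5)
The paper does not actually prove Theorem~\ref{theorem:tools-1}; it cites it directly from Cohen~\cite[Cor.~3.3 with $X=S^0$]{cohen}, May~\cite{may}, and Segal~\cite{segal}. Your proposal therefore undertakes more than the paper does, and the first three steps --- construction of $\alpha_d$ via the little cubes approximation and translation into the zero component, the homology isomorphism via group completion, and naturality of the diagram under $\RR^d\hookrightarrow\RR^{d+\ell}$ --- are a correct reassembly of exactly the ingredients the paper cites, so I have no objection there. The duality observation reducing "epimorphism in cohomology" to "monomorphism in homology" by finite type is also fine.

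The last step, however, has a genuine error, and it is precisely the case on which the whole dichotomy turns. You claim that for $p$ odd and $d$ even a class of the form $\beta Q^{d/2}[\iota]$ is "stably defined yet unstably excluded" and that this gives a nontrivial kernel. Two problems. First, the logic is reversed: a class that exists stably but not unstably would contribute to the \emph{cokernel} of $H_*(\Omega^d_0 S^d;\FF_p)\to H_*(\Omega^\infty_0 S^\infty;\FF_p)$, not the kernel; failure of injectivity requires a class that exists \emph{unstably} and dies stably. Second, the class $\beta Q^{d/2}[\iota]$ is in fact admissible and within the excess range $e(I)\leq d-1$ for $\Omega^d_0 S^d$, so it is not excluded; moreover it does not belong to the image of $Q^I$ on an additive generator unless $d$ is divisible by $2$, which is exactly the case at hand, but even then the operation is allowed. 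The actual obstruction for $p>2$ and $d$ even is the Browder bracket $\lambda_{d-1}(\iota,\iota)\in H_{d-1}(\Omega^d_0 S^d;\FF_p)$, the looped Whitehead product, which is nonzero in the unstable homology precisely because $d-1$ is odd (for $d$ odd it is forced to vanish mod an odd prime by graded antisymmetry), and which is annihilated by stabilization since Browder brackets vanish on infinite loop spaces. This is also consistent with the paper's own use of the Serre splitting $\Omega^d_0 S^d\simeq_{(p)}\Omega^{d-1}_0 S^{d-1}\times\Omega^d S^{2d-1}$ in the proof of Theorem~\ref{theorem:hight_bound_for_odd} for $d$ even: the factor $\Omega^d S^{2d-1}$ carries exactly these Whitehead-product classes, and it is the source of the kernel. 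You should replace the $\beta Q^{d/2}$ bookkeeping with the Browder-bracket argument, after which the dichotomy in $d$ mod $2$ for odd $p$ falls out correctly.
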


A direct consequence of the previous theorem is the following corollary.
\begin{corollary}\label{corollary:tools-1}
Let $d\geq 3$ be an odd integer, $k\geq 2$ be an integer, and $\FF$ be any field.
The composition 
\[
\gamma_{d,k}:=\alpha_{d}\circ i_{d,k}\colon F(\RR^d,k)/\Sym_k\longrightarrow \Omega^d_{0}S^d,
\]
of the structural map 
$i_{d,k}\colon F(\RR^d,k)/\Sym_k\longrightarrow\colim_{k\geq0}F(\RR^d,k)/\Sym_k$ 
and the map $\alpha_{d}$ induces an epimorphism of cohomology  algebras
\[
\gamma_{d,k}^*\colon H^*(\Omega^d_{0}S^d;\FF)\longrightarrow H^*(F(\RR^d,k)/\Sym_k;\FF).
\]
\end{corollary}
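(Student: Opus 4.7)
The plan is to reduce the epimorphism statement, via the universal coefficient theorem and Theorem~\ref{theorem:tools-1}, to injectivity of the natural homology map into the colimit, and then invoke classical homological stability for unordered configuration spaces of $\RR^d$.

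\emph{First step.} Theorem~\ref{theorem:tools-1} states that $\alpha_{d}$ induces an isomorphism on homology with any simple coefficients, hence in particular with coefficients in any field $\FF$. The universal coefficient theorem for cohomology with field coefficients (where the $\operatorname{Ext}$ correction vanishes) then yields that $\alpha_{d}^{*}$ is an isomorphism on cohomology. Since $\gamma_{d,k}^{*} = i_{d,k}^{*} \circ \alpha_{d}^{*}$, surjectivity of $\gamma_{d,k}^{*}$ is equivalent to surjectivity of $i_{d,k}^{*}$; a second application of the universal coefficient theorem reduces this in turn to injectivity of
$(i_{d,k})_{*}\colon H_{*}(F(\RR^{d},k)/\Sym_{k};\FF) \to H_{*}(\colim_{k\geq 0} F(\RR^{d},k)/\Sym_{k};\FF)$.

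\emph{Second step.} Model the colimit as the mapping telescope along the structural stabilization maps $F(\RR^{d},m)/\Sym_{m} \to F(\RR^{d},m+1)/\Sym_{m+1}$ obtained by adjoining a far-off extra point. Since singular homology commutes with filtered colimits, $(i_{d,k})_{*}$ is injective as soon as each such stabilization map induces an injection on $H_{*}(-;\FF)$ for $m\geq k$.

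\emph{Third step, and main obstacle.} This injectivity is a classical homological stability statement for configuration spaces of $\RR^{d}$. It can be extracted from Snaith's stable splitting $\Sigma^{\infty}(\Omega^{d}S^{d})_{+} \simeq \bigvee_{m\geq 0}\Sigma^{\infty}(F(\RR^{d},m)/\Sym_{m})_{+}$, which exhibits each $F(\RR^{d},m)/\Sym_{m}$ as a stable wedge summand of $\Omega^{d}S^{d}$; the homotopy equivalence between the connected components of $\Omega^{d}S^{d}$ then transports this summand structure to the $0$-component $\Omega^{d}_{0}S^{d}$ featured in $\alpha_{d}$. The main technical hurdle is to make this latter identification compatible with the homology isomorphism of Theorem~\ref{theorem:tools-1}; the hypothesis that $d \geq 3$ is odd enters here, in order to invoke the mono/epi property of the stabilization $\Omega^{d}_{0}S^{d} \to \Omega^{\infty}_{0}S^{\infty}$ uniformly at every prime, which combined with Bockstein comparisons allows all fields $\FF$ to be treated at once.
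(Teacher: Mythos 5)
Your proposal rests on the same two pillars as the paper's own proof: the homology isomorphism induced by $\alpha_{d}$ from Theorem~\ref{theorem:tools-1} combined with field-coefficient duality, and a stable splitting of configuration spaces to obtain injectivity of $(i_{d,k})_{*}$ in homology. The paper is more direct at the second step: it invokes the stable splitting of $\colim_{k\geq 0}F(\RR^d,k)/\Sym_k$ itself, in which $F(\RR^d,k)/\Sym_k$ appears as one of the stable wedge summands, so that $i_{d,k}$ admits a stable section and is therefore split injective on homology with any coefficients; there is no need to pass through the individual stabilization maps, the mapping telescope, or the components of $\Omega^{d}S^{d}$. The one genuinely misleading point in your third step is the claim that the hypothesis ``$d\geq 3$ odd'' is needed in order to invoke the mono/epi property of $\Omega^{d}_{0}S^{d}\longrightarrow\Omega^{\infty}_{0}S^{\infty}$: that map plays no role in this corollary (the target of $\gamma_{d,k}$ is $\Omega^{d}_{0}S^{d}$, not the infinite loop space), Theorem~\ref{theorem:tools-1} already asserts that $\alpha_{d}$ induces an isomorphism in homology with arbitrary simple coefficients for every $d\geq 1$, and the stable splitting holds for every $d$; consequently no Bockstein comparison is required to treat all fields at once, and the parity hypothesis is not used anywhere in the argument (it is carried in the statement only because that is the form in which the corollary is applied later, where the map to $\Omega^{\infty}_{0}S^{\infty}$ does matter). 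With that detour removed, your argument closes and agrees with the paper's.
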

\begin{proof}
The structural map $i_{d,k}\colon F(\RR^d,k)/\Sym_k\longrightarrow\colim_{k\geq0}F(\RR^d,k)/\Sym_k$ has a stable section, namely $\colim_{k\geq0}F(\RR^d,k)/\Sym_k$ stably splits where one of the factors is $F(\RR^d,k)/\Sym_k$, \cite[pp.~28]{cohen: 1995}.
Thus, $i_{d,k}$ induces a monomorphism in homology.
Since by the previous theorem $\alpha_{d}$ induces an isomorphism in homology it follows that ${\gamma_{d,k}}_*$ is a monomorphism in homology, or equivalently $\gamma_{d,k}^*$ is an epimorphism in cohomology.
\end{proof}

\subsection{Araki--Kudo--Dyer--Lashof operations}
\label{sec:Araki--Kudo--Dyer--Lashof operations}

 In this section we review some key properties of the Araki--Kudo--Dyer--Lashof homology operations.
In the case $p=2$ the homology operations were first introduced by Araki \& Kudo \cite{araki-kudo}, while in the case of odd primes it was done by  Dyer \& Lashof \cite{dyer-lahof}.
Peter May in his 1976 paper \cite{may533} expands the work of Araki \& Kudo  and Dyer \& Lashof by transforming homology operations into the more elegant and efficient so called upper notation.
Our review follows the presentation of May in \cite{may533}.

The spaces we consider are compactly generated weak Hausdorff and have a non-degenerate base-point.
Let $\C$ be an $E_{\infty}$ operad, let $X$ be a $\C$-space (or simply $E_{\infty}$-space), and let $p$ be a prime.
Then there exists a sequence of homomorphisms
\begin{eqnarray*}
Q^s\colon H_q(X;\FF_2)\longrightarrow H_{q+s}(X;\FF_2),  & \text{if }p=2\\
 Q^s\colon H_q(X;\FF_p)\longrightarrow H_{q+2s(p-1)}(X;\FF_p),& \text{if }p>2
\end{eqnarray*}
where $s\geq 0$.
These maps are called {\em Araki--Kudo--Dyer--Lashof  operations}; they satisfy following properties.
\begin{proposition}
\label{prop:DL-operations}
Let $X$ and $Y$ be $E_{\infty}$-spaces, and let $p$ be a prime.
Then the following hold:
\begin{compactenum}[\rm (1)]
\item The operations $Q^s$ are natural with respect to maps of $\C$-spaces.
\item Let  $x\in H_q(X;\FF_p)$, then
\begin{eqnarray*}
Q^s(x)=0,  & \text{if }s<q\text{ and } p=2,\\
Q^s(x)=0,  & ~\text{if }2s<q\text{ and } p>2.
\end{eqnarray*}
\item Let  $x\in H_q(X;\FF_p)$, then
\begin{eqnarray*}
Q^s(x)=x^2,  & \text{if }s=q\text{ and } p=2,\\
Q^s(x)=x^p,  & ~\text{if }2s=q\text{ and } p>2.
\end{eqnarray*}
\item Let $1\in H_0(X;\FF_p)$ be the class of the base point. Then $Q^0(1)=1$ and $Q^s(1)=0$ for $s>0$.
\item Let $\Delta\colon X\longrightarrow X\times X$ be the diagonal embedding and $\Delta_*$ the corresponding homomorphism in homology.
         The following Cartan formulas hold:
\begin{eqnarray*}
Q^s(x\otimes y)=\sum_{i=0}^sQ^i(x)\otimes Q^{s-i}(y),   & x\otimes y\in H_*(X\times Y;\FF_p),\\
Q^s(xy)=\sum_{i=0}^sQ^i(x)Q^{s-i}(y),   & x, y\in H_*(X;\FF_p),\\
\Delta_*(Q^s(x))=\sum_{i=0}^s\Big(\sum Q^i(x')\otimes Q^{s-i}(x'')\Big),   & x\in H_*(X;\FF_p),\ \Delta_*(x)=\sum x'\otimes x''.
\end{eqnarray*}         
\end{compactenum}
\end{proposition}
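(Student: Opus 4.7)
The plan is to construct the operations $Q^s$ explicitly from the $E_\infty$-structure and then verify each axiom in turn, following May's treatment in \cite{may533}. For a $\C$-space $X$, the operad structure provides a $\Sym_p$-equivariant map $\theta_p \colon \C(p) \times X^p \longrightarrow X$ for each prime $p$. Since $\C$ is an $E_\infty$-operad, $\C(p)$ is $\Sym_p$-free and contractible, so $\C(p)/\Sym_p \simeq B\Sym_p$, and the orbit map $\overline{\theta}_p\colon \C(p) \times_{\Sym_p} X^p \longrightarrow X$ realises the extended power construction $E\Sym_p \times_{\Sym_p} X^p \to X$.

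For $x \in H_q(X;\FF_p)$, the external $p$-th power $x^{\otimes p} \in H_{pq}(X^p;\FF_p)$ is $\Sym_p$-invariant. Paired with the standard $\FF_p$-basis $\{e_i\}$ of $H_*(B\Sym_p;\FF_p)$ through the Serre spectral sequence of the Borel construction, it gives classes $e_i \otimes x^{\otimes p} \in H_{i+pq}(\C(p) \times_{\Sym_p} X^p;\FF_p)$. I would then define $Q^s(x)$ as the image of such a class under $(\overline{\theta}_p)_*$, reindexed by $s = i + q$ for $p=2$ and by $i=(p-1)(2s-q)$ for $p$ odd (after separating off the Bockstein contributions). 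The requirement that $i \geq 0$ produces exactly the instability bounds in statement (2).

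With the construction in place, each remaining property admits a short conceptual verification. Naturality (1) is immediate since an $E_\infty$-map of $\C$-spaces intertwines the $\theta_p$'s. For the Frobenius identification in (3), the bottom class $e_0 \otimes x^{\otimes p}$ represents $x^{\otimes p}$ itself, and restricting $\overline{\theta}_p$ along a constant section of $\C(p)/\Sym_p$ recovers the internal $p$-th power. Property (4) follows because $\overline{\theta}_p$ collapses $\C(p)\times_{\Sym_p}\{*\}^p \simeq B\Sym_p$ to the base point of $X$, so in homology only the $i=0$ class survives. The three Cartan formulas in (5) come, respectively, from the compatibility of the extended power with cartesian products of $E_\infty$-spaces (external form), with the internal multiplication on $X$, which is itself an $E_\infty$-map (internal form), and with the diagonal $\Delta$, which is obtained by dualising the second via the coproduct on the Borel construction.

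The main technical obstacle -- and precisely the content of May's foundational work -- is to verify that these operations are independent of the chosen $E_\infty$-operad representing the action, and to pin down the upper-index formulation with the correct signs and normalisations uniformly at $p=2$ and at odd primes (where a careful Bockstein refinement is needed to separate the two parity components of $H_*(B\Sym_p;\FF_p)$). For the applications in this paper it suffices, as the authors do, to import Proposition \ref{prop:DL-operations} as a black box from \cite{may533}; the construction above is merely to indicate why each item is true once May's framework is granted.
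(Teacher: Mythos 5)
Your proposal correctly sketches May's construction of the operations via the extended power $\C(p)\times_{\Sym_p}X^p\to X$ and the reindexing from lower to upper notation, and you rightly observe that for odd $p$ one must be careful about which classes of $H_*(\BB\Sym_p;\FF_p)$ actually appear and about the Bockstein component. The paper itself gives no proof of this proposition at all --- it is cited verbatim from \cite{may533} as background --- so your treatment matches the paper's approach (black-box import from May), with the added bonus of a correct conceptual outline of where each of properties (1)--(5) comes from.
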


\medskip
The following property \cite[Cor.\,2]{dyer-lahof} is of a particular use for us.

\begin{lemma}
\label{lemma:Q(primitve)}
Let $X$ be a path connected $E_{\infty}$-space, and let $p$ be a prime.
If $x\in H_r(X;\FF_p)$ is a primitive homology class, then $Q^s(x)$ is primitive for every $s$.
\end{lemma}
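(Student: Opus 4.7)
The plan is to derive primitivity of $Q^s(x)$ directly from the third Cartan formula in Proposition \ref{prop:DL-operations}(5), combined with the normalization $Q^0(1)=1$ and $Q^j(1)=0$ for $j>0$ recorded in Proposition \ref{prop:DL-operations}(4). Before applying anything I would note that since $X$ is path connected, $H_0(X;\FF_p)$ is spanned by the class $1$ of the base point, so the primitivity condition $\Delta_*(x) = x\otimes 1 + 1\otimes x$ makes sense for $x \in H_r(X;\FF_p)$.

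First, using primitivity of $x$, the inner sum $\sum x' \otimes x''$ appearing in the Cartan formula reduces to the two summands $(x',x'') = (x,1)$ and $(x',x'')=(1,x)$, so the formula specializes to
\[
\Delta_*(Q^s(x)) = \sum_{i=0}^{s} \bigl( Q^i(x) \otimes Q^{s-i}(1) + Q^i(1)\otimes Q^{s-i}(x)\bigr).
\]
Next I would invoke Proposition \ref{prop:DL-operations}(4) to kill all but two terms: in the first pair only $i=s$ survives, yielding $Q^s(x)\otimes 1$, and in the second pair only $i=0$ survives, yielding $1\otimes Q^s(x)$. This gives $\Delta_*(Q^s(x)) = Q^s(x)\otimes 1 + 1 \otimes Q^s(x)$, which is exactly the assertion that $Q^s(x)$ is primitive.

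I do not anticipate any substantive obstacle, as the statement is a short bookkeeping consequence of properties already listed in Proposition \ref{prop:DL-operations}, and the argument is uniform in $s \geq 0$ and for both $p=2$ and odd $p$. The only mildly delicate point is that in degree ranges where Proposition \ref{prop:DL-operations}(2) forces $Q^s(x) = 0$, the conclusion is vacuous since $0$ is trivially primitive, so there is no need to split cases on the size of $s$ relative to $r$.
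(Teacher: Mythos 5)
Your proposal is correct and follows exactly the same route as the paper's proof: apply the coalgebra Cartan formula from Proposition~\ref{prop:DL-operations}(5) to the primitive decomposition $\Delta_*(x)=x\otimes 1+1\otimes x$, then use the normalization $Q^0(1)=1$, $Q^j(1)=0$ for $j>0$ from Proposition~\ref{prop:DL-operations}(4) to reduce the double sum to $Q^s(x)\otimes 1+1\otimes Q^s(x)$.
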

\begin{proof}
The homology class $x\in H_r(X;\FF_p)$ is primitive.
That is,
\[
\Delta_*(x)=x\otimes 1 + 1\otimes x,
\]
where $\Delta\colon X\longrightarrow X\times X$ is the diagonal embedding and $\Delta_*$ the corresponding homomorphism in homology.
From Proposition \ref{prop:DL-operations}\,(4)-(5) we have that
\[
\Delta_*(Q^s(x))=\sum_{i=0}^s\Big(Q^i(x)\otimes Q^{s-i}(1)+Q^i(1)\otimes Q^{s-i}(x)\Big)= Q^s(x)\otimes 1+1\otimes Q^s(x).
\]
Thus $Q^s(x)$ is primitive.
\end{proof}

Next we introduce  a notion of admissible sequences of Araki--Kudo--Dyer--Lashof  operations,  as presented in \cite[Sec.\,2]{may533}.

Let $p=2$, and  let $I = (s_1, \ldots, s_k)$  be a sequence of non-negative integers.
Degree, length, and excess of the sequence $I$ are defined by the formulas
\[
d(I)= \sum_{k\geq 1}s_j,\quad \ell(I) = k,\quad e(I) = s_1-\sum_{2 \leq j\leq k}s_j.
\]
The sequence $I= (s_1, \ldots, s_k)$ determines the homology operation $Q^I = Q^{s_1} \cdots Q^{s_k}$.
The operation $Q^I$, or the sequence $I$,  is called {\em admissible} if $2s_j \geq s_{j-1}$ for $ 2 \leq j \leq k$.

Let $p>2$ be an odd prime, and let $I = (\varepsilon_1,s_1, \ldots, \varepsilon_k, s_k)$ be a sequence on non-negative integers where $\varepsilon_{r}\in\{0,1\}$ and $s_r\geq \varepsilon_r$, for every $1\leq r\leq k$. 
Degree, length, and excess of the sequence $I$ in this case are defined by the formulas
\[
\quad d(I)= \sum_{k\geq 1}(2s_j(p-1) - \varepsilon_j),\quad \ell(I) = k,\quad e(I) = s_1-\sum_{2 \leq j\leq k}(s_j(p-1) - \varepsilon_j).
\]
The sequence $I = (\varepsilon_1,s_1, \ldots, \varepsilon_k, s_k)$ determines the homology operation $Q^I = \partial_p^{\varepsilon_1}Q^{s_1} \cdots \partial_p^{\varepsilon_k}Q^{s_k}$, where $\partial_p$ denotes the  Bockstein homomorphism. 
The operation $Q^I$, or the sequence $I$,  is called {\em admissible} if $ps_j -\varepsilon_j \geq s_{j-1}$ for $ 2 \leq j \leq k$.

An additional convention is that $b(I) = 0$ for $p=2$, and $b(I) = \epsilon_1$ in case $p > 2$.
In addition, the empty sequence $I$ is assumed to be  admissible and satisfies $d(I) = 0$, $\ell(I) = 0$, $e(I) = \infty$, and $b(I) = 0$.

\subsection{The space $Q(X)$}
\label{sec:The space $Q(X)$}

For a pointed compactly generated weak Hausdorff space $X$ we use the following classical notation:
\[
Q(X):=\colim_{d\geq 0}\Omega^d\Sigma^d(X),
\]
where $\Sigma^d(\cdot)$ denotes the $d$-iterated reduced suspension.
The space $Q(X)$ is an $E_{\infty}$-space.

\begin{theorem}
\label{th:Homology_of_Q(X)}
Let $X$ be a path connected weak Hausdorff space with non-degenerate base point.
The homology $H_*(Q(X);\FF_p)$ is isomorphic to the free graded commutative associative Hopf algebra generated by the admissible sequences of Araki--Kudo--Dyer--Lashof operations with excess strictly greater than $q$ acting on a basis of $H_{q}(X;\FF_p)$ for every $q\geq 1$. 
\end{theorem}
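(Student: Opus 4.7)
The plan is to identify $Q(X) = \colim_d \Omega^d \Sigma^d X$ with the free infinite loop space on $X$ and then invoke the classical computation of the mod-$p$ homology of a free $E_\infty$-space, due to Araki--Kudo for $p=2$ and to Dyer--Lashof, Cohen, and May for odd $p$.

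The first step is to produce the candidate map. By May's approximation theorem, $Q(X)$ is weakly equivalent to the free $\C$-space $C_\infty(X)$ for the little cubes $E_\infty$-operad, so $Q(X)$ is itself an $E_\infty$-space and the Araki--Kudo--Dyer--Lashof operations of Section~\ref{sec:Araki--Kudo--Dyer--Lashof operations} act on $H_*(Q(X);\FF_p)$. Denoting by $\eta\colon X\longrightarrow Q(X)$ the unit, I would define an algebra map $\Phi$ from the claimed free graded commutative Hopf algebra $F$ into $H_*(Q(X);\FF_p)$ by sending each generator $Q^I x$ (with $x\in H_q(X;\FF_p)$ a basis element, $I$ admissible, and $e(I) > q$) to $Q^I(\eta_* x)$ and extending multiplicatively.

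The constraints on the indexing set of generators are forced precisely by Proposition~\ref{prop:DL-operations}: non-admissible $Q^J$ reduce to sums of admissible ones via the Adem relations; by part (2) one has $Q^I(\eta_* x) = 0$ when $e(I) < q$; and by part (3) together with the Cartan formula, the boundary case $e(I) = q$ yields a $p$-th power times a shorter admissible monomial, so it is already present in the free commutative algebra on the remaining generators. Coalgebra compatibility of $\Phi$ holds because each class $\eta_* x$ is primitive, so by Lemma~\ref{lemma:Q(primitve)} and part (5) of Proposition~\ref{prop:DL-operations} every generator $Q^I(\eta_* x)$ is primitive in $H_*(Q(X);\FF_p)$.

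The main obstacle is to show that $\Phi$ is an isomorphism. I would do so by the classical inductive argument on the number of loops carried out by Araki--Kudo and developed further by Dyer--Lashof and Cohen--Lada--May: compute $H_*(\Omega^n\Sigma^n X;\FF_p)$ for each $n$ by running the Serre spectral sequences of the successive path--loop fibrations with fiber $\Omega^j\Sigma^n X$, and invoke Kudo's transgression theorem, which asserts that Dyer--Lashof operations commute with transgression. This is what forces the admissibility and excess bounds to appear at each stage, and for every fixed total degree the spectral sequences collapse once $n$ is sufficiently large. Passing to the colimit $n \to \infty$ converts these stage-by-stage identifications into the stated free Hopf algebra description of $H_*(Q(X);\FF_p)$.
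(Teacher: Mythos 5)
The paper does not prove this theorem; it is stated as a recollection of a classical result of Dyer--Lashof, Cohen and May (see \cite{dyer-lahof}, \cite{cohen}, \cite{may533}), and is used throughout as a black box. Your proposal therefore supplies a proof where the paper gives none, and your overall strategy --- stabilizing the inductive Serre spectral sequence and Kudo-transgression computation of $H_*(\Omega^n\Sigma^n X;\FF_p)$ as $n\to\infty$ --- is indeed the classical Araki--Kudo and Dyer--Lashof route, whereas Cohen's treatment in \cite{cohen} instead uses the configuration-space filtration of the free $E_\infty$-space and explicit geometric bases.

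There is, however, a genuine error in your coalgebra step. You assert that each class $\eta_*x$ is primitive and then invoke Lemma~\ref{lemma:Q(primitve)} to conclude that every generator $Q^I(\eta_*x)$ is primitive. This is false for a general path-connected $X$: since $\eta\colon X\to Q(X)$ is merely a map of spaces, $\eta_*$ is a morphism of coalgebras, so $\eta_*x$ is primitive in $H_*(Q(X);\FF_p)$ if and only if $x$ is primitive in the coalgebra $H_*(X;\FF_p)$, which in general it is not. In fact the paper works hard precisely around the failure of this claim: in the proof of Lemma~\ref{lemma:hight of Q(cofiber)}, where $X=F(\RR^d,p)/\Sym_p$, the canonical generators $x_\ell$ satisfy $\Delta_*(x_\ell)=\sum_i x_i\otimes x_{\ell-i}$ and are therefore not primitive, and the authors must replace them by Newton polynomials to manufacture primitive generators --- a device that only works under the dimensional restriction $d\leq 2p-1$. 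The correct formulation of coalgebra compatibility does not assert primitivity of generators. Rather, the coproduct on $H_*(Q(X);\FF_p)$ is determined by three facts: $\eta_*$ is a coalgebra map, $\Delta_*Q^s$ is given by the Cartan formula of Proposition~\ref{prop:DL-operations}(5), and the coproduct is an algebra homomorphism. The free-commutative-algebra statement, which is the part the paper actually uses, is what your spectral-sequence induction delivers; the primitivity clause should be removed and replaced by this description of the coproduct.
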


\noindent
For simplicity the action described in the theorem will be called ``the action of appropriate admissible sequences of Araki--Kudo--Dyer--Lashof operations on a  basis of $H_{\geq 1}(X;\FF_p)$''.

Next we review some properties of $Q(\cdot)$, for more details consult~\cite{cohen},~\cite{dyer-lahof},~\cite{may533}:

\begin{compactenum}
\item If $f\colon X\longrightarrow Y$ is continuous map of pointed compactly generated
  weak Hausdorff spaces, then the following diagram commutes:
\[
\xymatrix@!C=6em
{
X\ar[r]^-{i_X}\ar[d]^{f} & \Omega^d\Sigma^dX\ar[r]^-{j_{X,d}}\ar[d]^{\Omega^d\Sigma^d(f)} &Q(X)\ar[d]^{Q(f)}\\
Y\ar[r]^-{i_Y}& \Omega^d\Sigma^dY\ar[r]^-{j_{Y,d}}& Q(Y)
}
\]
where $i_X$, $i_Y$, $j_{X,d}$, $j_{Y,d}$ are inclusions, and the maps $\Omega^d\Sigma^d(f)$ and $Q(f)$ are induced by $f$.
More precisely, the map $i_X$ is defined by
\[
x\longmapsto \big(  (S^d,*)\rightarrow S^d\times\{x\} \hookrightarrow \Sigma^dX\big),
\]
where $S^d\times\{x\}$ is a subset of $\Sigma^dX:=S^d\wedge X=S^d\times X/(S^d\times\{x_0\}\cup \{*\}\times X)$ 
and $x_0\in X$ is the base point.

\noindent
Furthermore, the inclusions $i_X$ and $i_Y$ induce injections in homology with any simple
coefficients~\cite[map $\phi_*$ on pp.\,78]{dyer-lahof},~\cite[pp.\,225]{cohen}, while
$j_{X,d}$ and $j_{Y,d}$ induce injections in homology with coefficients in $\FF_p$ when
\begin{compactenum}[\rm (i)]
\item $p=2$ and $X=S^n$ is a sphere, or
\item $p$ is an odd prime and $X=S^n$ is a sphere with $d+n$ being odd.
\end{compactenum}
This is a consequence of~\cite[Thm.\,3.2]{cohen}.

\item If $f\colon X\longrightarrow Y$ induces a surjection in homology with 
  coefficients in $\FF_p$, the same is true for the map $Q(f)\colon Q(X)\longrightarrow Q(Y)$.
  This is a direct consequence of Theorem~\ref{th:Homology_of_Q(X)}.

\item Let $X$ and $Y$ be pointed compactly generated weak Hausdorff spaces with non-degenerate base point. 
  If $X$ and $Y$ are homotopy equivalent as pointed spaces, then $Q(X)\simeq Q(Y)$.

\item If $X$ is a suspension or the homology $H_*(X;\FF_p)$ is primitively generated (as a
  coalgebra; that is each element of the reduced homology is primitive), then the homology
  Hopf algebra $H_*(Q(X);\FF_p)$ is primitively generated (there is a choice of algebra
  generators that are primitive; this does not imply that each element in positive degrees
  is primitive)~\cite[Proof of Corollary after Thm.\,5.1]{dyer-lahof}, or equivalently,
  according to~\cite[Thm.\,4.2]{dyer-lahof}, we have that:
\[
\h(H^*(Q(X);\FF_p))\leq p.
\]
For details in case $p=2$ consult~\cite[Thm.\,7.3]{cohen: course1987}.
\end{compactenum}

\smallskip

The following result that will be used repeatedly is well-known, we give for the reader's convenience a short proof.

\begin{lemma}
\label{lemma:cofib--fib--Q(X)}
If the sequence of pointed finite path connected  cell complexes 
\[
\xymatrix@!
{
X\ar[r]^-{f}&Y\ar[r]^-{g}&Z
}
\]
is a cofibration of pointed spaces, then there is a natural weak homotopy equivalence
\[
Q(X) \overset{i}{\longrightarrow} \hofib(Q(g) \colon Q(Y) \longrightarrow Q(Z)),
\]
where $\hofib$ denotes the homotopy fiber.
\end{lemma}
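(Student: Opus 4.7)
The plan is to exploit the classical stable-homotopy principle that cofiber sequences become fiber sequences after sufficient suspension. First I would construct the map $i$ at finite level: since $X \hookrightarrow Y$ is a (closed) cofibration of pointed CW complexes, so is $\Sigma^d X \hookrightarrow \Sigma^d Y$ for every $d \geq 0$, and its cofiber is canonically $\Sigma^d Z$. Hence $\Sigma^d X \to \Sigma^d Y \to \Sigma^d Z$ is a cofiber sequence, and the Puppe construction provides a natural comparison map
\[
\sigma_d \colon \Sigma^d X \longrightarrow \hofib\bigl(\Sigma^d Y \to \Sigma^d Z\bigr),
\]
compatible with $d$. Since $\Omega^d$ preserves homotopy fibers, looping gives maps $\Omega^d \sigma_d \colon \Omega^d\Sigma^d X \to \hofib(\Omega^d\Sigma^d Y \to \Omega^d\Sigma^d Z)$ forming a compatible tower whose colimit will be the desired map $i$.

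Next I would estimate the connectivity of $\sigma_d$ using the Blakers--Massey excision theorem. Because $X$, $Y$, $Z$ are pointed and path connected, $\Sigma^d X$, $\Sigma^d Y$, $\Sigma^d Z$ are each at least $d$-connected. Applied to the pushout square expressing the cofibration, Blakers--Massey then yields that $\sigma_d$ is approximately $(2d-1)$-connected, and consequently $\Omega^d \sigma_d$ is approximately $(d-1)$-connected. In particular, the connectivity of $\Omega^d\sigma_d$ tends to infinity with $d$.

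Finally, I would pass to the sequential colimit over $d$. The structural maps in the colimit defining $Q(\cdot)$ can be taken to be closed cofibrations, so filtered colimits commute with homotopy groups and with homotopy fibers (after, if necessary, replacing the colimit by a mapping telescope). Consequently
\[
\colim_d \hofib\bigl(\Omega^d\Sigma^d Y \to \Omega^d\Sigma^d Z\bigr) \;\simeq\; \hofib\bigl(Q(Y) \to Q(Z)\bigr),
\]
and $\colim_d \Omega^d\sigma_d$ induces a map $i \colon Q(X) \to \hofib(Q(g))$ which on each $\pi_n$ is eventually an isomorphism by the connectivity bound of the previous paragraph; this forces $i$ to be a weak equivalence.

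The main obstacle will be the bookkeeping on both ends of the argument: extracting the precise Blakers--Massey connectivity bound for the comparison $\sigma_d$ (the slogan "stable $=$ fiber $=$ cofiber" is well known, but making it quantitative on pointed CW complexes with only path-connectedness assumed requires care), and verifying that the homotopy fiber actually commutes with the colimit defining $Q$, which hinges on arranging the structure maps $\Omega^d\Sigma^d(\cdot) \hookrightarrow \Omega^{d+1}\Sigma^{d+1}(\cdot)$ as honest cofibrations in the compactly generated category.
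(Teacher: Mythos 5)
Your proposal is correct, but it takes a genuinely different route from the paper. The paper's proof is formal: it invokes the natural isomorphism $\pi_n^s(X)\cong \pi_n(Q(X))$ and the fact that stable homotopy is a homology theory, so the cofibration $X\to Y\to Z$ yields a long exact sequence of the groups $\pi_n(Q(-))$; the map $i$ is produced directly from the preferred nullhomotopy of $Q(g)\circ Q(f)=Q(g\circ f)$, and the Five Lemma applied against the long exact sequence of the fibration $\hofib(Q(g))\to Q(Y)\to Q(Z)$ finishes the argument. You instead unwind the black box: you build finite-level comparison maps $\sigma_d\colon \Sigma^dX\to\hofib(\Sigma^dY\to\Sigma^dZ)$, estimate their connectivity by Blakers--Massey (with the stated hypotheses the bound is in fact $2d$, so $\Omega^d\sigma_d$ is $d$-connected --- your ``approximately $(2d-1)$'' is in the right range and the only thing used is that the connectivity is unbounded), and pass to the telescope. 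This is essentially a proof of the exactness statement the paper cites, so your argument is more self-contained and quantitative, at the cost of the bookkeeping you correctly flag: compatibility of the $\sigma_d$ with the stabilization maps, and commuting $\hofib$ with the sequential colimit along closed cofibrations. The paper's route is shorter and avoids all connectivity estimates; yours makes the geometric content visible. Both are sound, and your map $i$ agrees up to homotopy with the one the paper constructs from the canonical nullhomotopy.
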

\begin{proof}
There is a natural isomorphism
\[
\alpha(X) \colon \pi_n^s(X) \overset{\cong}{\longrightarrow} \pi_n(Q(X))
\]
from the stable homotopy of $X$ to the homotopy of $Q(X)$. Since stable homotopy is a
homology theory, we obtain a long exact sequence
\begin{multline*}
\cdots \xrightarrow{\partial^s_{n+1}} \pi_n^s(X) \xrightarrow{\pi_n^s(f)} \pi_n^s(Y) \xrightarrow{\pi_n^s(g)} \pi_n^s(Z) 
\\
\xrightarrow{\partial_n^s}  \pi_{n-1}^s(X) \xrightarrow{\pi_{n-1}^s(f)} \pi_{n-1}^s(Y) 
\xrightarrow{\pi_{n-1}^s(g)} \pi_{n-1}^s(Z) \xrightarrow{\partial_{n-1}^s} \cdots
\end{multline*}
Using the natural isomorphisms $\alpha$, we obtain a long exact sequence
\begin{multline*}
\cdots \xrightarrow{\partial_{n+1}} \pi_n(Q(X)) \xrightarrow{\pi_n(Qf)} \pi_n(Q(Y)) \xrightarrow{\pi_n^s(Q(g))} \pi_n^s(Q(Z)) 
\\
\xrightarrow{\partial_n}  \pi_{n-1}(Q(X)) \xrightarrow{\pi_{n-1}(Q(f))} \pi_{n-1}(Q(Y)) 
\xrightarrow{\pi_{n-1}(Q(g))} \pi_{n-1}^s(Q(Z)) \xrightarrow{\partial_{n-1}} \cdots
\end{multline*}
Since the composite $g \circ f$ is constant and hence $Q(g) \circ Q(f) = Q(g\circ f)$ is
constant there exists a preferred nullhomotopy.
Therefore we get a natural map
\[
i \colon Q(X) \longrightarrow \hofib(Q(g) \colon Q(Y) \longrightarrow Q(Z))
\]
to the homotopy fiber of $Q(g)$. We obtain a commutative diagram with exact rows, where the lower
row is a part of the long exact homotopy sequence of a fibration
{\small
\[
\xymatrix@!C=19mm{
\pi_{n+1}(Q(Y)) \ar[r]^{\pi_{n+1}(Q(g))} \ar[d]^{\id}
&
\pi_{n+1}(Q(Z)) \ar[r]^{\partial_{n+1}} \ar[d]^{\id}
&
\pi_n(Q(X)) \ar[r]^{\pi_n(Q(f))} \ar[d]^{\pi_n(i)}
&
\pi_n(Q(Y)) \ar[r]^{\pi_n(Q(g))} \ar[d]^{\id}
&
\pi_n(Q(Z)) \ar[d]^{\id}
\\
\pi_{n+1}(Q(Y)) \ar[r] ^{\pi_{n+1}(Q(g))}
&
\pi_{n+1}(Q(Z)) \ar[r]^{\delta_{n+1}}
&
\pi_n(\hofib) \ar[r]^{\pi_n(Q(j))}  
&
\pi_n(Q(Y)) \ar[r]^{\pi_n(Q(g))} 
&
\pi_n(Q(Z)) .
}
\]
}
By the Five-Lemma the middle vertical arrow is an isomorphism.

\end{proof}

The second result we are going to use is due to Kahn \& Priddy~\cite[Thm.\,pp.\,103]{kahn - priddy}.

\begin{theorem}\label{theorem:tools-2}
For every prime $p$ there are maps
\[
h_p\colon\Omega^\infty_0 S^\infty\longrightarrow Q(\BB\Sym_p)
\qquad\text{and}\qquad
\theta_p\colon Q(\BB\Sym_p)\longrightarrow\Omega^\infty_0 S^\infty
\]
such that the following composition
\[
\xymatrix@!
{
\Omega^\infty_0 S^\infty\ar[r]^-{h_p}& Q(\BB\Sym_p)\ar[r]^-{\theta_p}&\Omega^\infty_0 S^\infty
}
\] 
is a $p$-local equivalence, i.e., an isomorphism in cohomology with coefficients in $\FF_p$.
\end{theorem}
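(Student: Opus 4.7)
The plan is to build both $\theta_p$ and $h_p$ from classical transfer and extended-power constructions, and then to check the composition $\FF_p$-cohomologically using Nakaoka's description of $H_*(\Omega^\infty_0 S^\infty;\FF_p)$, which is precisely the instance $X=S^0$ of Theorem \ref{th:Homology_of_Q(X)} combined with the equivalence $\colim_k F(\RR^\infty,k)/\Sym_k \simeq \Omega^\infty_0 S^\infty$ of Theorem \ref{theorem:tools-1}.

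The construction of $\theta_p$ is standard. The canonical $p$-sheeted covering $E\Sym_p\times_{\Sym_p}\{1,\dots,p\}\to \BB\Sym_p$ is a finite covering, and its stable Becker--Gottlieb (equivalently, Kahn--Priddy) transfer is a stable map $\Sigma^\infty(\BB\Sym_p)_+\to\Sigma^\infty S^0$. Taking adjoints produces an unstable map $\BB\Sym_p\to QS^0 = \Omega^\infty S^\infty$, which by the universal property of the free infinite loop space extends uniquely to an infinite loop map $Q(\BB\Sym_p)\to QS^0$; restricting to the component of degree $0$ yields $\theta_p$.

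For $h_p$, I would exploit the fact that $H_*(QS^0;\FF_p)$ is a free graded-commutative Hopf algebra on the admissible Araki--Kudo--Dyer--Lashof operations of positive excess acting on a fundamental class, together with Nakaoka's observation that each such generator arises by Dyer--Lashof operations from a single distinguished primitive class $e\in H_{2(p-1)}(\BB\Sym_p;\FF_p)$ sitting in the image of $H_*(\BB\Sym_p;\FF_p)\to H_*(QS^0;\FF_p)$. Using the universal property of $\Omega^\infty_0 S^\infty$ as the free infinite loop space on $S^0$, one then produces an infinite loop map $h_p\colon \Omega^\infty_0 S^\infty \to Q(\BB\Sym_p)$ realizing the preferred section on these generators.

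Since both $\theta_p$ and $h_p$ are infinite loop maps, the composite $\theta_p\circ h_p$ commutes with every Araki--Kudo--Dyer--Lashof operation, so to verify it induces an isomorphism on $\FF_p$-homology it suffices to inspect its effect on the Nakaoka generators. This reduces to a single transfer computation for the $p$-fold cover of $\BB\Sym_p$, which by the double-coset formula and the fact that $\Sym_p$ has a Sylow $p$-subgroup $\ZZ/p$ of index $(p-1)!$ evaluates to multiplication by $(p-1)!\equiv -1\pmod{p}$ (Wilson's theorem) on the fundamental generator. Since this scalar is a unit mod $p$, the composition is a $p$-local equivalence. The main technical difficulty lies precisely in this last step: one must track the compatibility of the transfer with the Dyer--Lashof operations and then carefully normalize to avoid picking up zero on any generator. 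This is where both the primality of $p$ and the restriction to the degree-$0$ component of $\Omega^\infty S^\infty$ are essential.
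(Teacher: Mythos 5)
The paper does not prove this result at all: it is quoted directly from Kahn and Priddy \cite[Thm.\,pp.\,103]{kahn - priddy}, so there is no in-paper argument to compare against. Judging the proposal on its own merits, the construction of $\theta_p$ as the stable transfer of the $p$-fold cover $\EE\Sym_p\times_{\Sym_p}\{1,\ldots,p\}\to\BB\Sym_p$, extended freely to an infinite loop map $Q(\BB\Sym_p)\to\Omega^\infty_0 S^\infty$, is indeed the standard definition and is fine.

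The construction of $h_p$, however, contains the central gap, and it is not a small one. You claim to produce $h_p$ ``using the universal property of $\Omega^\infty_0 S^\infty$ as the free infinite loop space on $S^0$.'' This fails on two counts. First, the free infinite loop space on $S^0$ is $QS^0=\Omega^\infty S^\infty$, not its degree-zero component; the component $\Omega^\infty_0 S^\infty$ is not the free object on any small space in any usable sense. Second, and more decisively, even the universal property of $QS^0$ only says that infinite loop maps $QS^0\to Y$ (for grouplike $Y$) correspond to maps of pointed spaces $S^0\to Y$, i.e.\ to the choice of a point in $Y$. Since $\BB\Sym_p$ is connected, $\pi_0(Q(\BB\Sym_p))=\pi_0^s(\BB\Sym_p)=0$, so $Q(\BB\Sym_p)$ is connected and the only infinite loop map out of $QS^0$ it receives this way is the constant one. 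The universal property cannot produce a map ``realizing a preferred section on generators''; it contains no homological input whatsoever. In fact, the genuine Kahn--Priddy section $h_p$ is \emph{not} an infinite loop map: in the usual constructions it arises from the stable (Snaith-type) splitting of $\Sigma^\infty(\Omega^\infty_0 S^\infty)_+$ into the wedge of the $\Sigma^\infty\BB\Sym_{k+}$, or equivalently from the configuration-space power map of Cohen--May--Taylor (which is exactly the map $\colim_{k}h_{\infty,p,k}$ composed with a homotopy inverse of $\alpha_\infty$ appearing in Proposition~\ref{proposition:tools} of the present paper). Because $h_p$ is only a map of spaces (a stable map, not an infinite loop map), your subsequent assertion that $\theta_p\circ h_p$ commutes with all Araki--Kudo--Dyer--Lashof operations is unfounded, and the reduction to a single transfer computation collapses. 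The closing double-coset remark also conflates two different covers: the $p$-fold cover of $\BB\Sym_p$ corresponds to $\Sym_{p-1}\le\Sym_p$, while the index-$(p-1)!$ Sylow $p$-subgroup gives the cover $\BB\ZZ/p\to\BB\Sym_p$; the relevant unit $(p-1)!\equiv-1\pmod p$ enters through the latter and through an explicit homology calculation against Nakaoka's basis, not a one-line transfer evaluation. So while $\theta_p$ is set up correctly and the intuition about Wilson's theorem points in the right direction, the argument as written does not construct $h_p$ and does not supply the homological verification that the theorem actually requires.
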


Next ingredient comes from the work of Cohen, May \& Taylor~\cite[Thm.\,2.7]{cohen - may - taylor},~\cite{cohen - may - taylor - II}.
\begin{proposition}\label{proposition:tools}
Let $d\geq1$, $k\geq1$ be integers, and $p$ be a prime.
Then, there are continuous maps
\[
h_{d,p,k}\colon F(\RR^d,k)/\Sym_k\longrightarrow Q(F(\RR^d,p)/\Sym_p)
\quad\textrm{and}\quad
t_{d,p}\colon \Omega^d_0S^d \longrightarrow Q(F(\RR^d,p)/\Sym_p)
\]
such that the following diagram commutes
\[
\xymatrix@R25pt@C28pt
{
\colim_{k\geq0}F(\RR^d,k)/\Sym_k\ar[rr]^-{\colim_{k\geq0}h_{d,p,k}}\ar@{^{(}->}[dd]\ar[rd]^{\alpha_{d}}&
&
Q(F(\RR^d,p)/\Sym_p)\ar@{^{(}->}[dd]\ar[rdd]
\\
 &  
 \Omega^d_0S^d\ar[ru]^{t_{d,p}}\ar@{^{(}->}[dd] &
\\
\colim_{k\geq0}F(\RR^{\infty},k)/\Sym_k
\ar[rd]^{\alpha_{\infty}}
\ar[rr]^-{\colim_{k\geq0}h_{\infty,p}}|!{[r];[r]}\hole 
& 
&
Q(\BB\Sym_p)\ar[r]^{\theta_p}&\Omega^{\infty}_0 S^{\infty}\\
 &  
 \Omega^{\infty}_0S^{\infty}\ar[ru]^{h_{p}}&
}
\]
where the map $\theta_p$ is introduced in Theorem~\ref{theorem:tools-2}.
\end{proposition}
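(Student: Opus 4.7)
The plan is to obtain the maps $h_{d,p,k}$ and $t_{d,p}$, together with the commutativity of the diagram, by invoking the Cohen--May--Taylor construction of power operations for iterated loop spaces and then checking naturality under the stabilization $\RR^d \hookrightarrow \RR^\infty$.

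First, for each $d \geq 1$ and each prime $p$, Cohen--May--Taylor construct a natural $p$-th power operation
\[
t_{d,p}\colon \Omega^d_0 S^d \longrightarrow Q(F(\RR^d, p)/\Sym_p),
\]
arising, at the level of the little $d$-cubes configuration-space model for $\Omega^d_0 S^d$, from the assignment that sends a configuration of $k$ points to the formal $E_\infty$-sum, inside $Q(F(\RR^d,p)/\Sym_p)$, of its $\binom{k}{p}$ sub-configurations of size $p$. Composing with the structural maps of Corollary~\ref{corollary:tools-1} produces
\[
h_{d,p,k} := t_{d,p} \circ \alpha_d \circ i_{d,k}\colon F(\RR^d,k)/\Sym_k \longrightarrow Q(F(\RR^d,p)/\Sym_p),
\]
so that the upper triangle of the cube commutes by definition. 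The compatibility of both $\alpha_d$ and the power operation with the inclusions $\RR^d \hookrightarrow \RR^{d+\ell}$ (and hence with the inclusions of little cubes operads $\C_d \hookrightarrow \C_\infty$) gives the commutativity of the two vertical faces of the cube; for $\alpha_d$ this is part of Theorem~\ref{theorem:tools-1}, while for $t_{d,p}$ it is built into the Cohen--May--Taylor construction.

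In the stable range, $F(\RR^\infty,p)/\Sym_p$ is weakly equivalent to $\BB\Sym_p$, and a further input from Cohen--May--Taylor identifies the stabilized power operation
\[
t_{\infty,p}\colon \Omega^\infty_0 S^\infty \longrightarrow Q(\BB\Sym_p)
\]
with the Kahn--Priddy map $h_p$ of Theorem~\ref{theorem:tools-2}, while $\theta_p \circ h_p$ is the $p$-local identity on $\Omega^\infty_0 S^\infty$. This pins down the bottom-right portion of the diagram and, combined with the naturality statements above, forces commutativity of all remaining faces.

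The main obstacle, which genuinely requires the Cohen--May--Taylor apparatus rather than a purely formal argument, is the existence of $t_{d,p}$ as a map on the unstable iterated loop space $\Omega^d_0 S^d$ (and not merely stably), together with its identification, after stabilization in $d$, with the Kahn--Priddy transfer $h_p$. Both facts rest on the equivariant transfer associated with the $p$-fold covering $F(\RR^d,p) \to F(\RR^d,p)/\Sym_p$ and on the interplay between this transfer and the $E_d$-structure on $\Omega^d_0 S^d$. Once these two identifications are in place, the remaining commutativities in the diagram reduce to a routine diagram chase using the naturality of $\alpha_d$, $\alpha_\infty$, $h_p$ and $\theta_p$.
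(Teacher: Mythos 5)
Your proposal is correct and takes essentially the same route as the paper: the paper offers no proof of Proposition~\ref{proposition:tools} at all, simply attributing it to Cohen--May--Taylor \cite{cohen - may - taylor},\cite{cohen - may - taylor - II}, and your argument is an elaboration of exactly that citation plus naturality and the Kahn--Priddy identification. One small factual slip: $F(\RR^d,p)\to F(\RR^d,p)/\Sym_p$ is a $p!$-sheeted ($\Sym_p$-)covering rather than a $p$-fold covering, though this does not affect the overall argument.
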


\begin{corollary}\label{corollary:tools-2}
Let $d\geq1$ be an integer and $p$ be a prime.
If $p=2$, or $p>2$ is an odd prime and $d$ is an odd integer, 
the composition map $h_{d,p,k}=\colim_{k\geq0}h_{d,p,k}\circ i_{d,k}$
\[
F(\RR^d,k)/\Sym_k
\longrightarrow
\colim_{k\geq0}F(\RR^d,k)/\Sym_k
\longrightarrow
Q(F(\RR^d,p)/\Sym_p)
\]
induces an epimorphism in cohomology
\[
h_{d,p,k}^*\colon H^*(Q(F(\RR^d,p)/\Sym_p);\FF_p)\longrightarrow H^*(F(\RR^d,k)/\Sym_k;\FF_p).
\]
\end{corollary}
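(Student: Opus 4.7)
The plan is to factor $h_{d,p,k}$ using the commutative diagram of Proposition~\ref{proposition:tools}, and then reduce the surjectivity claim to one concerning a single square that is handled by the Kahn--Priddy splitting.

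First I would read off from that diagram the relation $\colim_{k\geq 0}h_{d,p,k}=t_{d,p}\circ\alpha_d$, so that $h_{d,p,k}=t_{d,p}\circ\alpha_d\circ i_{d,k}$ and hence
\[
h_{d,p,k}^* \;=\; i_{d,k}^*\circ\alpha_d^*\circ t_{d,p}^*.
\]
Two of these factors are already understood: $\alpha_d^*$ is an isomorphism on $\FF_p$-cohomology by Theorem~\ref{theorem:tools-1}, and $i_{d,k}^*$ is an epimorphism by the stable splitting of $\colim_{k\geq 0}F(\RR^d,k)/\Sym_k$ that was invoked in the proof of Corollary~\ref{corollary:tools-1}. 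So everything reduces to showing that $t_{d,p}^*$ is surjective.

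For the surjectivity of $t_{d,p}^*$, I would extract from the diagram of Proposition~\ref{proposition:tools} the commutative square
\[
\xymatrix{
\Omega^d_0 S^d \ar[r]^-{t_{d,p}} \ar[d]_{j_d} & Q(F(\RR^d,p)/\Sym_p) \ar[d]^{Q(\iota)} \\
\Omega^\infty_0 S^\infty \ar[r]^-{h_p} & Q(\BB\Sym_p),
}
\]
where $j_d$ and $\iota\colon F(\RR^d,p)/\Sym_p\hookrightarrow\BB\Sym_p$ are the stabilization maps; its commutativity is obtained by chasing the two evident paths from $\Omega^d_0 S^d$ to $Q(\BB\Sym_p)$ inside the large diagram. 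Passing to $\FF_p$-cohomology gives $j_d^*\circ h_p^* = t_{d,p}^*\circ Q(\iota)^*$. Under the standing hypothesis that $p=2$, or $p$ is odd and $d$ is odd, Theorem~\ref{theorem:tools-1} makes $j_d^*$ surjective; the Kahn--Priddy Theorem~\ref{theorem:tools-2} makes $h_p^*\circ\theta_p^* = (\theta_p\circ h_p)^*$ an isomorphism, so $h_p^*$ is surjective. A composition of surjections is surjective, hence $t_{d,p}^*\circ Q(\iota)^*$ is surjective, which forces $t_{d,p}^*$ itself to be surjective.

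The only real obstacle is book-keeping: identifying the correct subsquare inside the large diagram of Proposition~\ref{proposition:tools} that feeds the Kahn--Priddy map, and observing that the parity restriction on $(p,d)$ enters the argument at exactly one point, namely in the stabilization assertion of Theorem~\ref{theorem:tools-1}. No new calculations are needed beyond combining the two theorems with the factorization above.
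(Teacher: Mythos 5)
Your proof is correct, and it does arrive at the same conclusion via a genuinely different decomposition than the paper's own argument. The paper's proof works entirely in homology: it sets $\psi = j_d\circ\alpha_d$, observes that the diagram forces the factorization $\psi = \theta_p\circ Q(\iota)\circ(\colim_{k\geq0}h_{d,p,k})$, and then deduces that $(\colim_{k\geq0}h_{d,p,k})_*$ is a monomorphism simply because it is the first factor of the monomorphism $\psi_*$; combined with $(i_{d,k})_*$ being a monomorphism this gives the result, with no use of the map $t_{d,p}$ at all. You instead dualize to cohomology, factor $h_{d,p,k}^* = i_{d,k}^*\circ\alpha_d^*\circ t_{d,p}^*$, and establish $t_{d,p}^*$ surjective separately via the Kahn--Priddy square $h_p\circ j_d = Q(\iota)\circ t_{d,p}$. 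That extra step essentially proves Corollary~\ref{corollary:tools-3} along the way, which in the paper's exposition is derived \emph{after} Corollary~\ref{corollary:tools-2}; so you have reordered the logical dependency but not introduced circularity, since your proof of $t_{d,p}^*$ surjectivity only uses Theorems~\ref{theorem:tools-1}, \ref{theorem:tools-2}, and Proposition~\ref{proposition:tools}. One small remark on precision: when you say the commutativity of the identified square is obtained ``by chasing the two evident paths,'' it is cleaner to simply invoke the fact that the square is one of the faces of the diagram in Proposition~\ref{proposition:tools}, whose commutativity is asserted there; no chase is needed, and a chase argument would only get you commutativity after composing with $\alpha_d$, which you would then have to cancel using that $(\alpha_d)_*$ is an isomorphism. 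As written your invocation of the proposition is the right move; the "chasing" phrasing just understates it slightly.
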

\begin{proof}
  The proof will be obtained by combining Theorems~\ref{theorem:tools-1} and~\ref{theorem:tools-2} 
  with Proposition~\ref{proposition:tools}.  
  Since the structural map $i_{d,k}\colon F(\RR^d,k)/\Sym_k\longrightarrow\colim_{k\geq0}F(\RR^d,k)/\Sym_k$ induces a 
  monomorphism in homology it is enough to prove that the induced map $(\colim_{k\geq0}h_{d,p,k})_*$ in homology with
  coefficients in $\FF_p$ is a monomorphism.  
  Consider the following commutative diagram:
 \[
\xymatrix@R15pt@C50pt
 {
F(\RR^d,k)/\Sym_k\ar[r]^-{i_{d,k}}&
\colim_{k\geq0}F(\RR^d,k)/\Sym_k\ar[r]^-{\colim_{k\geq0}h_{d,p,k}}\ar[dd]^{\alpha_{d}}\ar[rdd]^{\psi}& Q(F(\RR^d,p)/\Sym_p)\ar@{^{(}->}[d]\\ 
 &  & Q(\BB\Sym_p)\ar[d]^{\theta_p}\\  
 &\Omega_0^dS^d\ar[r]^-{j_d}
 & \Omega_0^{\infty} S^{\infty}
 }
 \]
 where both $i_{d,k}$ and $j_d$ are inclusions from the definition of the colimit.  In
 order to prove that $(\colim_{k\geq0}h_{d,p,k})_*$ is a monomorphism, it suffices to
 prove that $\psi_*$ is a monomorphism. This follows from the fact that
 $\psi=j_d\circ\alpha_{d}$, and both maps $j_d$ and $\alpha_{d}$ induce monomorphisms in
 homology provided that $p=2$ or $p>2$ and $d$ is odd.
\end{proof}

\begin{corollary}\label{corollary:tools-3}
Let $d\geq1$ be an integer and $p$ be a prime.
If $p=2$, or $p>2$ is an odd prime and $d$ is an odd integer, the map then 
$t_{d,p}\colon \Omega^d_0S^d \longrightarrow Q(F(\RR^d,p)/\Sym_p)$
induces an epimorphism in cohomology
\[
t_{d,p}^*\colon H^*(Q(F(\RR^d,p)/\Sym_p);\FF_p)\longrightarrow H^*(\Omega^d_0S^d;\FF_p).
\]
\end{corollary}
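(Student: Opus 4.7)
The plan is to deduce this from Proposition \ref{proposition:tools}, Theorem \ref{theorem:tools-1}, and the proof of Corollary \ref{corollary:tools-2}, with no new geometric input. I would dualize the statement: it is equivalent to show that the map
\[
(t_{d,p})_*\colon H_*(\Omega^d_0 S^d;\FF_p)\longrightarrow H_*\bigl(Q(F(\RR^d,p)/\Sym_p);\FF_p\bigr)
\]
is a monomorphism under the stated parity condition on $p$ and $d$.

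The key input is the upper triangular piece of the diagram in Proposition \ref{proposition:tools}, which reads
\[
\colim_{k\geq 0} h_{d,p,k} \;=\; t_{d,p}\circ \alpha_{d}.
\]
First I would recall that, by Theorem \ref{theorem:tools-1}, the map $\alpha_{d}$ induces an isomorphism $(\alpha_{d})_*$ on homology with any simple coefficients, in particular with $\FF_p$ coefficients. Next I would invoke the argument given inside the proof of Corollary \ref{corollary:tools-2}: under the hypothesis that $p=2$, or $p$ is odd and $d$ is odd, the composite $\psi=j_d\circ\alpha_d$ from $\colim_{k\geq 0}F(\RR^d,k)/\Sym_k$ into $\Omega^{\infty}_0 S^\infty$ induces a monomorphism in $\FF_p$-homology (because both $j_d$ and $\alpha_d$ do), and therefore $(\colim_{k\geq 0} h_{d,p,k})_*$ is a monomorphism.

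Combining these two observations with the factorization $\colim h_{d,p,k} = t_{d,p}\circ \alpha_{d}$ gives that $(t_{d,p})_*\circ (\alpha_{d})_*$ is a monomorphism in $\FF_p$-homology. Since $(\alpha_{d})_*$ is an isomorphism, $(t_{d,p})_*$ is itself injective, which by universal coefficients (applied to $\FF_p$-vector spaces) is equivalent to surjectivity of $t_{d,p}^*$ in $\FF_p$-cohomology.

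There is no real obstacle: the corollary is a packaging of material already assembled, and the only thing to be careful about is to verify that the parity hypothesis is exactly what is needed to make the factor $j_d$ in the proof of Corollary \ref{corollary:tools-2} induce a monomorphism in homology, which is precisely the content of the last assertion of Theorem \ref{theorem:tools-1}.
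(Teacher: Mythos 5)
Your proof is correct and follows essentially the same route as the paper: the paper also deduces the corollary from the commutative triangle $\colim_{k\geq 0}h_{d,p,k}=t_{d,p}\circ\alpha_d$ of Proposition~\ref{proposition:tools}, the monomorphism statement established in the proof of Corollary~\ref{corollary:tools-2}, and the fact that $\alpha_d$ induces an isomorphism in $\FF_p$-(co)homology by Theorem~\ref{theorem:tools-1}. You merely spell out in homology what the paper phrases in one line in cohomology, so there is no substantive difference.
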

\begin{proof}
This is a direct consequence of the previous corollary and the fact that, according to Theorem~\ref{theorem:tools-1}, 
the map $\alpha_{d}$ induces an isomorphism in cohomology with $\FF_p$ coefficients.
\end{proof}

\subsection{Height of $H^*(Q(F(\RR^d,p)/\Sym_p);\FF_p)$}
\label{sec:On the homology and cohomology of unordered configuration space}

First, we recall some know facts, for example consult \cite[Prop.\,5.1(iii) and Thm.\,5.2]{cohen}.

\begin{proposition}\label{prop:cohomology_of_S_p}
  $H^*(\Sym_p;\FF_p)\cong \Lambda[e]\otimes \FF_p[t]$, where $\Lambda(\cdot)$
  denotes the exterior algebra, $e$ is a class of degree $2p-3$ and $t$ is the
  Bockstein of $e$, and so a class of degree $2p-2$.
\end{proposition}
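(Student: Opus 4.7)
The plan is to identify $H^*(\Sym_p;\FF_p)$ as the invariants of the action of the Weyl group of a Sylow $p$-subgroup on its cohomology, then compute these invariants explicitly.

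First, I would observe that a Sylow $p$-subgroup $P$ of $\Sym_p$ is cyclic of order $p$, generated by any $p$-cycle, say $\sigma=(1\;2\;\cdots\;p)$. Since $P$ has index $(p-1)!$, which is coprime to $p$, the standard transfer/restriction argument yields
\[
H^*(\Sym_p;\FF_p)\ \cong\ H^*(P;\FF_p)^{N_{\Sym_p}(P)/P}.
\]
Next I would identify the normalizer: $N_{\Sym_p}(P)$ is isomorphic to the affine group $\mathrm{Aff}(\FF_p)=\FF_p\rtimes\FF_p^{\times}$ of order $p(p-1)$, so the Weyl group $W:=N_{\Sym_p}(P)/P$ is cyclic of order $p-1$, acting on $P\cong\FF_p$ through the full group of automorphisms $\FF_p^{\times}$.

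The second step is a direct computation of the invariants. Recall
\[
H^*(P;\FF_p)\ \cong\ \Lambda[y]\otimes\FF_p[x],
\]
with $\deg y=1$, $\deg x=2$, and $x=\partial_p y$ the Bockstein. A generator $\omega\in W$ acts on $H^1(P;\FF_p)=\mathrm{Hom}(P,\FF_p)$ as multiplication by a primitive $(p-1)$-th root of unity in $\FF_p^{\times}$, hence $\omega\cdot y=\omega y$, and by naturality of the Bockstein $\omega\cdot x=\omega x$. A monomial $y^a x^b$ with $a\in\{0,1\}$, $b\ge0$ is therefore $W$-invariant if and only if $(p-1)\mid (a+b)$. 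The minimal nontrivial invariants are consequently $e:=yx^{p-2}$ of degree $2p-3$ and $t:=x^{p-1}$ of degree $2p-2$, and every invariant is a polynomial in $e$ and $t$. Since $y^2=0$ we have $e^2=0$, so as a graded $\FF_p$-algebra the invariant ring is $\Lambda[e]\otimes\FF_p[t]$.

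Finally I would verify the Bockstein relation: using the derivation property together with $\partial_p y=x$ and $\partial_p x=0$,
\[
\partial_p(e)\ =\ \partial_p(y\,x^{p-2})\ =\ x\cdot x^{p-2}-y\cdot\partial_p(x^{p-2})\ =\ x^{p-1}\ =\ t.
\]
The main obstacle, which is really the only substantive point, is pinning down the action of $W$ on $H^1(P;\FF_p)$ correctly; everything else is straightforward invariant-theoretic bookkeeping. Alternatively, one could cite this computation directly from the references given in the paper (e.g.\ Cohen's treatment in \cite{cohen}), but the Sylow-plus-invariants route above is self-contained and short.
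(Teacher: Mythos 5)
Your argument is correct. The paper does not actually prove this proposition; it is stated as a recalled fact with a pointer to Cohen's computation in \cite{cohen}, so your self-contained derivation is a genuine (and standard) alternative to citation. The route you take --- restriction to the cyclic Sylow $p$-subgroup $P$, identification of the Weyl group $N_{\Sym_p}(P)/P$ with $\FF_p^{\times}$, and explicit computation of the invariants of $\Lambda[y]\otimes\FF_p[x]$ --- is the classical one, and all the individual steps check out: the congruence $(p-1)\mid(a+b)$ correctly isolates $e=yx^{p-2}$ and $t=x^{p-1}$ as the multiplicative generators of the invariant ring, and the Bockstein computation $\partial_p(yx^{p-2})=x^{p-1}$ is right. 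The one point you should make explicit rather than fold into ``the standard transfer/restriction argument'' is that the transfer argument by itself only identifies $H^*(\Sym_p;\FF_p)$ with the \emph{stable elements} of $H^*(P;\FF_p)$; the further identification of stable elements with $N_{\Sym_p}(P)/P$-invariants uses that $P$ is abelian (Swan's theorem, or Cartan--Eilenberg XII.10.1). Since $P\cong\ZZ/p$ here, this is immediate, but it is the hypothesis that makes your displayed isomorphism valid and deserves a sentence. With that caveat supplied, your proof is complete and arguably more useful to a reader than the paper's bare citation.
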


\begin{theorem}\label{theorem:cohomology_of_F(odd,p)}
Let $p$ be an odd prime and $d\ge3$ be an odd integer. 
Then there is an isomorphism of algebras:
\[
H^*(F(\RR^d,p)/\Sym_p;\FF_p)\cong H^{\leq (d-1)(p-1)}(\Sym_p;\FF_p),
\]
that is natural in $d$.
\end{theorem}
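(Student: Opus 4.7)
The plan is to realise the claimed isomorphism as the cohomology pullback $\iota^*$ along the natural map $\iota \colon F(\RR^d,p)/\Sym_p \longrightarrow F(\RR^\infty,p)/\Sym_p \simeq B\Sym_p$ induced by the inclusion $\RR^d\hookrightarrow\RR^\infty$. Since $\iota$ is manifestly functorial in $d$, the resulting algebra map is automatically natural in $d$, and because $\iota^*$ is the pullback of a continuous map it is automatically a map of graded $\FF_p$-algebras; thus it suffices to show that $\iota^*$ induces an isomorphism of graded vector spaces onto the truncation in degrees $\leq(d-1)(p-1)$.

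First I would invoke Cohen's Vanishing Theorem (Vanishing Theorem~8.2 of \cite{cohen}; see also \cite[Thm.~6.1]{blagojevic-luck-ziegler-1}), which gives $H^n(F(\RR^d,p)/\Sym_p;\FF_p)=0$ for every $n>(d-1)(p-1)$. Since $\iota^*$ is an algebra map into a graded algebra vanishing above this threshold, it kills the ideal of $H^*(\Sym_p;\FF_p)$ generated by classes of degree strictly greater than $(d-1)(p-1)$, and so factors as an algebra map
\[
\bar\iota^* \colon H^{\leq(d-1)(p-1)}(\Sym_p;\FF_p) \longrightarrow H^*(F(\RR^d,p)/\Sym_p;\FF_p).
\]
It remains to verify that $\bar\iota^*$ is bijective in every degree.

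For this I would study the Serre spectral sequence of the Borel fibration
\[
F(\RR^d,p) \hookrightarrow E\Sym_p\times_{\Sym_p}F(\RR^d,p) \simeq F(\RR^d,p)/\Sym_p \twoheadrightarrow B\Sym_p,
\]
where the identification of the Borel construction with the orbit space uses the freeness of the $\Sym_p$-action. The $E_2$-page is $E_2^{i,j}=H^i(\Sym_p;H^j(F(\RR^d,p);\FF_p))$. For odd $d$, Cohen's equivariant computation shows that $H^j(F(\RR^d,p);\FF_p)$ is non-zero only for $j=s(d-1)$ with $0\leq s\leq p-1$, with an explicit $\Sym_p$-module structure. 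A rank count in each total degree $\leq(d-1)(p-1)$, combined with the vanishing established above, forces the contributions of the rows $j>0$ to be killed by spectral sequence differentials in this range, so that the edge homomorphism coincides with $\bar\iota^*$ and is an isomorphism onto the truncation.

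The main technical obstacle is the final step: explicitly verifying that all positive-row contributions to the $E_\infty$-page are annihilated by differentials in precisely the right pattern below the threshold $(d-1)(p-1)$. This hinges crucially on both $d$ and $p$ being odd: for even $d$ the fiber cohomology $H^*(F(\RR^d,p);\FF_p)$ acquires generators outside of multiples of $d-1$, and for $p=2$ the exterior generator $e$ and its Bockstein $t=\partial_p(e)$ from Proposition~\ref{prop:cohomology_of_S_p} are organised differently, so the clean truncation description would fail.
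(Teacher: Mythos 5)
You have chosen a plausible strategy, but the proof is not complete, and you acknowledge this yourself: the step you call "the main technical obstacle" is in fact the entire content of the theorem, and the rank-count gesture does not close it. Let me spell out why.

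Cohen's Vanishing Theorem gives $H^n(F(\RR^d,p)/\Sym_p;\FF_p)=0$ for $n>(d-1)(p-1)$. From this alone you can conclude that $\iota^*\colon H^*(\Sym_p;\FF_p)\to H^*(F(\RR^d,p)/\Sym_p;\FF_p)$ kills everything of degree greater than $(d-1)(p-1)$, hence factors through the truncation. But you then need two genuinely nontrivial inputs that the vanishing theorem does not supply: (i) that $\bar\iota^*$ is \emph{injective} in degrees $\leq(d-1)(p-1)$, i.e.\ that no differential in the Serre spectral sequence lands nontrivially in the bottom row below that threshold; and (ii) that nothing from the rows $j>0$ survives to $E_\infty$ in total degree $\leq(d-1)(p-1)$, so that $\bar\iota^*$ is also surjective. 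Both statements concern the fine behaviour of the spectral sequence in the range where the abutment is \emph{nonzero}, and the vanishing above the threshold constrains that range only very loosely. To run the rank count you would need to know $H^i(\Sym_p;H^{s(d-1)}(F(\RR^d,p);\FF_p))$ for all $s>0$, hence the $\Sym_p$-module structure of the fiber cohomology, and then track the differentials $d_{s(d-1)+1}$. None of this is carried out in your sketch; it is exactly the part you would have to supply, and it is not a formal consequence of a dimension count because you do not a priori know the dimensions of $H^n(F(\RR^d,p)/\Sym_p;\FF_p)$ in the range $n\leq(d-1)(p-1)$ -- that is precisely what the theorem asserts.

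The paper itself does not prove this statement: it cites it directly to Cohen, \emph{The homology of $C_{n+1}$-spaces}, specifically Proposition~5.1(iii) and Theorem~5.2 there. Cohen's route is quite different from yours. Rather than a Borel fibration spectral sequence, he computes $H_*(\Omega^dS^d;\FF_p)$ explicitly as a free commutative algebra on admissible Dyer--Lashof monomials and then reads off $H_*(F(\RR^d,p)/\Sym_p;\FF_p)$ as a stable wedge summand (the weight-$p$ piece of the Snaith splitting, cf.\ Theorem~\ref{theorem:tools-1} and the stable splitting quoted in Corollary~\ref{corollary:tools-1}). In that description the additive answer in the claimed degrees, the algebra structure, and naturality in $d$ all drop out at once. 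If you want to complete a Serre spectral sequence proof along your lines, you would have to import Cohen's explicit $\Sym_p$-equivariant computation of $H^*(F(\RR^d,p);\FF_p)$ anyway, and at that point the loop-space computation is already doing all the work -- which is presumably why the paper simply cites Cohen rather than giving a spectral sequence argument.
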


The Universal coefficient theorem implies that
$
H_i(F(\RR^d,p)/\Sym_p;\FF_p)\cong \FF_p
$
if and only if $i\in\{2(p-1)\ell : 0\leq \ell\leq \tfrac{d-1}{2}\}\cup\{2(p-1)\ell -1 : 1\leq \ell\leq \tfrac{d-1}{2}\}$.
Let $x_{\ell}$ and $y_{\ell}$, for $1\leq \ell\leq \tfrac{d-1}{2}$, denote additive generators of $H_{\geq 1}(F(\RR^d,p)/\Sym_p;\FF_p)$ where
\[
\deg(x_{\ell})=2(p-1)\ell,\quad\deg(y_{\ell})=2(p-1)\ell-1, \quad \partial^p_*(x_{\ell})=y_{\ell},
\]
where $\partial^p_*$ is the (homology) Bockestein.
In addition, $x_0=1\in H_0(F(\RR^d,p)/\Sym_p;\FF_p)$.

Let $\Delta\colon F(\RR^d,p)/\Sym_p\longrightarrow F(\RR^d,p)/\Sym_p\times F(\RR^d,p)/\Sym_p$ be the diagonal embedding and $\Delta_*$ the induced map in homology.
Then from Theorem~\ref{theorem:cohomology_of_F(odd,p)} follows:
\begin{equation}
\label{eq:Delta(x)}
\Delta_* (x_{\ell})=\sum_{i=0}^{\ell}x_i\otimes x_{\ell-i},
\end{equation}
and
\begin{equation}
\label{eq;Delta(y)}
\Delta_* (y_{\ell})=y_{\ell}\otimes 1 + \sum_{i=1}^{\ell-1}x_i\otimes y_{\ell-i}+\sum_{i=1}^{\ell-1} y_{\ell-i}\otimes x_i+1\otimes y_{\ell}.
\end{equation}

We record a few direct consequences of the previous results.

\begin{corollary}\label{cor : F(R^3,k)}
Let $p$ be an odd prime. 
The cup product in $\widetilde{H}^*(F(\RR^3,p)/\Sym_p;\FF_p)$ is trivial, or dually the homology $H_*(F(\RR^3,p)/\Sym_p;\FF_p)$ is primitively generated as a coalgebra. 
Moreover,
\[
\h(H^*(Q(F(\RR^3,p)/\Sym_p));\FF_p))\leq p.
\]
\end{corollary}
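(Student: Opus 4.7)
The plan is to combine the explicit computation of $H^*(F(\RR^3,p)/\Sym_p;\FF_p)$ from Theorem~\ref{theorem:cohomology_of_F(odd,p)} with the general property of $Q(\cdot)$ recorded at the end of Section~\ref{sec:The space $Q(X)$}. Specializing Theorem~\ref{theorem:cohomology_of_F(odd,p)} to $d=3$ gives an isomorphism of algebras
\[
H^*(F(\RR^3,p)/\Sym_p;\FF_p)\;\cong\;H^{\leq 2(p-1)}(\Sym_p;\FF_p).
\]
By Proposition~\ref{prop:cohomology_of_S_p}, the right-hand side is the truncation of $\Lambda[e]\otimes\FF_p[t]$ with $\deg(e)=2p-3$ and $\deg(t)=2p-2$ to degrees at most $2p-2$. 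Consequently the reduced cohomology is additively generated by $e$ and $t$ alone.

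Next, I would observe that any nontrivial cup product of two classes of positive degree in this truncated algebra would have degree at least
\[
2(2p-3)=4p-6,
\]
which strictly exceeds $2p-2$ whenever $p\geq 3$. Hence every such product vanishes, i.e.\ the cup product in $\widetilde{H}^*(F(\RR^3,p)/\Sym_p;\FF_p)$ is identically zero. Dualizing via the universal-coefficient pairing $\langle x\cup y,\alpha\rangle=\langle x\otimes y,\Delta_*(\alpha)\rangle$, the vanishing of cup products on positive-degree classes translates into $\Delta_*(\alpha)=\alpha\otimes 1+1\otimes\alpha$ for every $\alpha\in \widetilde{H}_*(F(\RR^3,p)/\Sym_p;\FF_p)$. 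In particular $H_*(F(\RR^3,p)/\Sym_p;\FF_p)$ is primitively generated as a coalgebra.

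For the height bound, I would invoke item (4) of the list of properties of $Q(\cdot)$ collected in Section~\ref{sec:The space $Q(X)$}, which says that whenever $H_*(X;\FF_p)$ is primitively generated the Hopf algebra $H_*(Q(X);\FF_p)$ is primitively generated, and dually
\[
\h(H^*(Q(X);\FF_p))\leq p.
\]
Applying this to $X=F(\RR^3,p)/\Sym_p$ gives the claimed inequality
\[
\h(H^*(Q(F(\RR^3,p)/\Sym_p);\FF_p))\leq p.
\]
There is essentially no obstacle once the truncation degree is read off from Theorem~\ref{theorem:cohomology_of_F(odd,p)}; the only point worth double-checking is the numerical inequality $4p-6>2p-2$ for odd primes $p$, which is clear since $p\geq 3$.
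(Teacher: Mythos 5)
Your proposal is correct and follows essentially the same route as the paper: specialize Theorem~\ref{theorem:cohomology_of_F(odd,p)} to $d=3$, read off from Proposition~\ref{prop:cohomology_of_S_p} that $\widetilde{H}^*$ is concentrated in degrees $2p-3$ and $2p-2$, conclude triviality of cup products (hence primitivity in homology), and invoke property~(4) of $Q(\cdot)$. The only cosmetic difference is that you make the degree inequality $2(2p-3)>2p-2$ explicit, whereas the paper simply notes that the nonzero reduced cohomology lives in exactly two degrees.
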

\begin{proof}
From Theorem~\ref{theorem:cohomology_of_F(odd,p)} we have that
\[
H^*(F(\RR^3,p)/\Sym_p;\FF_p)\cong H^{\leq 2p-2}(\Sym_p;\FF_p).
\]
Now Proposition~\ref{prop:cohomology_of_S_p} implies that 
\[
\widetilde{H}^i(F(\RR^3,p)/\Sym_p;\FF_p)\neq 0
\quad\textrm{if and only if}\quad
i\in \{2p-3,2p-2\}.
\]
Thus, the cup product in $\widetilde{H}^*(F(\RR^3,p)/\Sym_p;\FF_p)$ is trivial, or 
$H_*(F(\RR^3,p)/\Sym_p;\FF_p)$ is primitively generated as a coalgebra. 

\noindent
Property (4) of $Q(X)$ implies that $H_*(Q(F(\RR^3,p)/\Sym_p);\FF_p)$ is primitively generated as a Hopf algebra, or equivalently
\[
\h(H^*(Q(F(\RR^3,p)/\Sym_p);\FF_p))\leq p.
\]
\end{proof}

Let $C^d_n$, for $n\leq d$ odd integers, denote the homotopy cofiber of the natural inclusion
\[
F(\RR^n,p)/\Sym_p\longrightarrow F(\RR^d,p)/\Sym_p.
\]
The natural inclusion does not have to be a cofibration.
Therefore, from now on, we assume that it is transformed into a cofibration by substituting the codomain with a mapping cylinder, consult for example~\cite[Section\,3.5]{arkowitz}. 

Since $C^d_n$ is a cofiber of the natural inclusion $F(\RR^n,p)/\Sym_p\longrightarrow F(\RR^d,p)/\Sym_p$ we have that:
\[
H_*(C^d_n,\mathrm{pt};\FF_p)\cong H_*(F(\RR^d,p)/\Sym_p,F(\RR^n,p)/\Sym_p;\FF_p),
\]
and
\[
H^*(C^d_n,\mathrm{pt};\FF_p)\cong H^*(F(\RR^d,p)/\Sym_p,F(\RR^n,p)/\Sym_p;\FF_p).
\]
The long exact sequence of the pair $(F(\RR^d,p)/\Sym_p,F(\RR^n,p)/\Sym_p)$ in (co)homology and Theorem~\ref{theorem:cohomology_of_F(odd,p)} imply
\begin{equation}
\label{eq:homology_of_cofiber}
H_i(F(\RR^d,p)/\Sym_p,F(\RR^n,p)/\Sym_p;\FF_p)\cong H^i(F(\RR^d,p)/\Sym_p,F(\RR^n,p)/\Sym_p;\FF_p) =0,
\end{equation}
for $i\leq(n+1)(p-1)-2$ or $i\geq(d-1)(p-1)+1$.

As a first direct application of the relation \eqref{eq:homology_of_cofiber} we get the following estimate for the height of $H^*(Q(C_n^d);\FF_p)$ in a special case.

\begin{corollary}\label{cor : cofiber}
Let $p$ be an odd prime, and  $d\geq3$, $n\ge3$ be odd integers with $n\leq d$.
Then for $2n+2\geq d$ the cup product in $\widetilde{H}^*(C^d_n;\FF_p)$ is trivial, or dually the homology $H_*(C^d_n;\FF_p)$ is primitively generated as a coalgebra.
Furthermore,
\[
\h(H^*(Q(C_n^d);\FF_p))\leq p.
\]
\end{corollary}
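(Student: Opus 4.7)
The plan is to reduce the whole statement to a short degree argument on $\widetilde{H}^*(C_n^d;\FF_p)$ and then to invoke property (4) of the functor $Q$. First I would pin down the range of non-vanishing reduced cohomology: since $C_n^d$ is a cofiber of the inclusion $F(\RR^n,p)/\Sym_p\hookrightarrow F(\RR^d,p)/\Sym_p$, equation \eqref{eq:homology_of_cofiber} together with Theorem~\ref{theorem:cohomology_of_F(odd,p)} forces
\[
\widetilde{H}^i(C_n^d;\FF_p)=0 \qquad \text{unless} \qquad (n+1)(p-1)-1 \leq i \leq (d-1)(p-1).
\]

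Next I would argue that the cup product on $\widetilde{H}^*(C_n^d;\FF_p)$ vanishes identically by comparing degrees. For any two positive-degree classes $a,b$ one has $\deg(ab)\geq 2(n+1)(p-1)-2$. Since $d$ is odd while $2n+2$ is even, the hypothesis $2n+2\geq d$ actually strengthens to $d\leq 2n+1$; combined with $p-1\geq 2$ this yields
\[
(p-1)(2n+3-d)\geq 4 > 2,
\]
which rearranges exactly to $2(n+1)(p-1)-2 > (d-1)(p-1)$. Hence $ab$ sits strictly above the top nonzero degree, and so $ab=0$. Dualising through the Kronecker pairing (a perfect duality over $\FF_p$), the triviality of all cup products of reduced classes is equivalent to the statement that every element of the reduced homology is primitive, i.e.\ $H_*(C_n^d;\FF_p)$ is primitively generated as a coalgebra. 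This proves the first half of the corollary.

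For the displayed height bound I would then quote property (4) of the functor $Q$ recalled at the end of Section~\ref{sec:The space $Q(X)$}: $C_n^d$ is a path-connected pointed CW complex with non-degenerate basepoint (the inclusion in the definition was replaced by a cofibration), and its mod-$p$ homology is primitively generated by the previous step, so property (4) directly gives
\[
\h(H^*(Q(C_n^d);\FF_p))\leq p.
\]
The only real obstacle in this outline is the degree inequality above; the crucial point is that one must use the parity of $d$ and $n$ (both odd) to upgrade the hypothesis $2n+2\geq d$ into the strict bound $d\leq 2n+1$ on which the inequality rests — without this parity upgrade the borderline case $p=3$, $d=2n+2$ would fail.
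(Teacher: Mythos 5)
Your proof is correct and follows essentially the same route as the paper: it uses the vanishing range \eqref{eq:homology_of_cofiber} to push every cup product of reduced classes above the top nonzero degree $(d-1)(p-1)$, dualizes to primitivity, and then invokes property (4) of $Q(\cdot)$ for the height bound. Your explicit parity upgrade from $2n+2\geq d$ to $d\leq 2n+1$, needed to close the borderline case $p=3$ for products of two bottom-degree classes, is in fact slightly more careful than the paper's one-line degree count.
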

\begin{proof}
Using \eqref{eq:homology_of_cofiber}, and the fact that
\[
2n+2\geq d\qquad\Leftrightarrow\qquad (n+1)(p-1)+(n+1)(p-1)-1\geq(d-1)(p-1)+1,
\]
we conclude that the cup product in $\widetilde{H}^*(C^d_n;\FF_p)$ is trivial, i.e., the cup product of any two non-zero cohomology classes is zero.

\noindent
Again, property (4) of $Q(X)$ implies that $H_*(Q(C_n^d);\FF_p)$ is primitively generated as a Hopf algebra, or equivalently
$
\h(H^*(Q(C_n^d);\FF_p))\leq p
$.

\end{proof}

Now we prove two important technical lemmas.

\begin{lemma}
\label{lemma:hight of Q(cofiber)}
Let $p$ be an odd prime.
If $3\leq d\leq 2p-1$ is odd integer, then
\begin{equation}
\label{eq:h-1}
\h(H^*(Q(F(\RR^{d},p)/\Sym_p);\FF_p))\leq p.
\end{equation}
\end{lemma}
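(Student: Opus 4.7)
The plan is to prove the bound by induction on odd $d$ in the range $[3,2p-1]$. The base case $d=3$ is immediate from Corollary~\ref{cor : F(R^3,k)}. For the inductive step, I would exploit the cofibration
\[
F(\RR^{d-2},p)/\Sym_p \hookrightarrow F(\RR^d,p)/\Sym_p \to C^d_{d-2}
\]
(after possibly replacing the inclusion by the mapping-cylinder cofibration). Applying Lemma~\ref{lemma:cofib--fib--Q(X)} then produces a homotopy fibration sequence
\[
Q(F(\RR^{d-2},p)/\Sym_p) \longrightarrow Q(F(\RR^d,p)/\Sym_p) \longrightarrow Q(C^d_{d-2}),
\]
where all three spaces are infinite loop spaces and all maps are maps of infinite loop spaces. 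The associated mod-$p$ cohomology thus fits into an extension of connected graded cocommutative Hopf algebras over $\FF_p$, which is precisely the setting in which the comparison lemma of Section~\ref{sec:Height comparison lemma for Hopf algebras} applies.

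Next I would bound the heights of the outer two terms. Since $d-2$ is odd and lies in $[3,2p-3]$, the inductive hypothesis gives
\[
\h(H^*(Q(F(\RR^{d-2},p)/\Sym_p);\FF_p))\le p.
\]
For the quotient, the hypothesis $d\leq 2p-1$ together with $d\geq 5$ yields $2(d-2)+2=2d-2\geq d$, so Corollary~\ref{cor : cofiber} applies to $C^d_{d-2}$ and gives
\[
\h(H^*(Q(C^d_{d-2});\FF_p))\le p.
\]
Thus both ``fiber'' and ``base'' cohomology algebras are at height $\leq p$, equivalently, both are primitively generated as Hopf algebras by property~(4) of $Q(\cdot)$.

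Finally, I would invoke the height comparison lemma for Hopf algebras from Section~\ref{sec:Height comparison lemma for Hopf algebras} to conclude that the middle term also has height $\leq p$, yielding \eqref{eq:h-1}. The main obstacle is precisely this last step: in a generic extension of Hopf algebras the heights only multiply, so one would naively obtain a bound of $p^2$. The point is that the comparison lemma tailored to this situation must use the infinite-loop-space structure (inherited from the $Q$-construction) and the fact that both outer algebras are primitively generated to rule out any new $p$-th power relations being created; the induction hypothesis and Corollary~\ref{cor : cofiber} then feed exactly the hypotheses needed for this lemma.
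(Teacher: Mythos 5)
Your plan correctly assembles the cofibration $F(\RR^{d-2},p)/\Sym_p \hookrightarrow F(\RR^d,p)/\Sym_p \to C^d_{d-2}$, the resulting short exact sequence of Hopf algebras after applying $Q(\cdot)$, and the hypotheses of Corollary~\ref{cor : cofiber}. But the step you yourself flag as the ``main obstacle'' is, in fact, a genuine gap. The only height comparison lemma in the paper, Lemma~\ref{lemma:tools}, gives the multiplicative bound $\h(B)\leq \h(A)\h(C)=p^2$, not $p$. You gesture at a ``comparison lemma tailored to this situation'' that would keep the height at $p$ by exploiting the infinite-loop structure and primitivity of the outer terms; no such lemma exists in the paper, and it cannot be taken for granted. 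In a general short exact sequence of connected commutative, cocommutative Hopf algebras over $\FF_p$, having both $A$ and $C$ of height $\leq p$ does not force $\h(B)\leq p$; any strengthening of Lemma~\ref{lemma:tools} would need extra input specific to $H_*(Q(-);\FF_p)$, and providing that input is essentially what the paper's proof does instead, by a completely different route.

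The paper's proof is not an induction on $d$ at all and does not use Lemma~\ref{lemma:tools}. It directly shows that $H_*(Q(X);\FF_p)$ (with $X=F(\RR^d,p)/\Sym_p$) is primitively generated as a Hopf algebra, which by the cited Dyer--Lashof theorem is equivalent to $\h(H^*(Q(X);\FF_p))\leq p$. Property~(4) of $Q(\cdot)$ would give this immediately if $H_*(X;\FF_p)$ were primitively generated as a coalgebra, but for $d>3$ the generators $x_\ell$ (with $\Delta_*(x_\ell)=\sum x_i\otimes x_{\ell-i}$) are not primitive, so a correction is needed. The key move is to pass into $H_*(Q(X);\FF_p)$, where one has the Pontryagin product, and replace the $x_\ell$ by the Newton polynomials
\[
v_\ell \;=\; \ell\,x_\ell - \sum_{i=1}^{\ell-1} x_i v_{\ell-i},
\]
which are shown to be primitive (likewise their Bocksteins). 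This is exactly where the hypothesis $3\leq d\leq 2p-1$ enters: it forces $\ell\leq\tfrac{d-1}{2}\leq p-1$, so each $\ell$ is invertible in $\FF_p$ and the $v_\ell$ together with $\partial^p_*v_\ell$ span the same Hopf algebra as the original ADL-generated one. Then Theorem~\ref{th:Homology_of_Q(X)} and Lemma~\ref{lemma:Q(primitve)} finish the argument, since ADL operations applied to primitives stay primitive. Your proposal treats the bound $d\leq 2p-1$ merely as the range over which an induction runs, but its actual role is arithmetic (invertibility of the leading coefficients of the Newton polynomials mod $p$), and this idea is absent from your plan.
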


\begin{proof}
Let $3\leq d\leq 2p-1$ and $X:=F(\RR^{d},p)/\Sym_p$. 
The proof will be conducted as follows:
\begin{compactitem}

\item The aim is to prove that $H_*(Q(X);\FF_p)$ is primitively generated as a Hopf algebra, i.e., that there exists a choice of algebra generators that are primitive.
Then \cite[Thm.\,4.2]{dyer-lahof} implies that $\h(H^*(Q(X);\FF_p))\leq p$.

\item The homology $H_*(Q(X);\FF_p)$ is generated by an action of appropriate admissible sequences of Araki--Kudo--Dyer--Lashof operations on a basis of $H_{\geq1}(X;\FF_p)$, Theorem~\ref{th:Homology_of_Q(X)}. 
Recall that Lemma~\ref{lemma:Q(primitve)} states that $Q^s(x)$ of a primitive $x$ is also a primitive for any $s$. 
Thus, it suffices to modify the canonical basis of $H_{\geq1}(X;\FF_p)$, now seen as a part of $H_*(Q(X);\FF_p)$, in such a way that all elements of the new basis are primitives. 

\item Modification of each non-primitive element in the canonical basis  will be done using the Newton polynomials and adapting the argument from \cite[Def.\,5.1 and Lemma\,5.2]{willington}.
\end{compactitem}	

\medskip
\noindent
Let $x_{\ell}$ and $y_{\ell}$, $1\leq \ell\leq \tfrac{d-1}{2}$, denote the additive generators of $H_{\geq 1}(X;\FF_p)$ where
\[
\deg(x_{\ell})=2(p-1)\ell,\quad\deg(y_{\ell})=2(p-1)\ell-1, \quad \partial^p_*(x_{\ell})=y_{\ell},
\]
and $\partial^p_*$ is the (homology) Bockestein.
Denote $x_0=1\in H_{0}(F(\RR^d,p)/\Sym_p;\FF_p)$.

\noindent
As we have seen: 
\[
\Delta_* (x_{\ell})=\sum_{i=0}^{\ell}x_i\otimes x_{\ell-i},\quad
\Delta_* (y_{\ell})=y_{\ell}\otimes 1 + \sum_{i=1}^{\ell-1}(x_i\otimes y_{\ell-i}+ y_{\ell-i}\otimes x_i)+1\otimes y_{\ell},
\]
where $\Delta:=\Delta_2$ is the diagonal embedding and $\Delta_*$ the induced map in homology.

\noindent
Let us define a sequence of polynomials associated to the even dimensional generators. 
The elements we define will live in $H_*(Q(X);\FF_p)$.
Let $v_1:=x_1$. 
Then for $1\leq \ell\leq \tfrac{d-1}{2}$ we define a sequence of elements by: 
\[
v_{\ell}:=\ell x_{\ell} - \sum_{i=1}^{\ell-1}x_iv_{\ell-i},
\]
where multiplication is the Pontryagin product.
The elements $v_{\ell}$ are called {\em Newton polynomials}.
The assumption $d\leq 2p-1$ implies that $\ell\leq \tfrac{d-1}{2}\leq p-1$, meaning that $(\ell,p)=1$ for all $1\leq \ell\leq \tfrac{d-1}{2}$. 
Consequently, $\tfrac{1}{\ell}$ is invertible in $\FF_p$. 

\begin{claim}
$v_1,\ldots,v_{\tfrac{d-1}{2}}$ are primitives in $H_*(Q(X);\FF_p)$.
\end{claim}
\begin{proof}
The class $v_1=x_1$ is of degree $2(p-1)$ and is primitive. 
The class $x_2$ is not primitive since $\Delta_*(x_2)= x_2\otimes 1 + x_1\otimes x_1 +1\otimes x_2$. 
We verify that $v_2=2x_2-x_1^2$ is primitive:
{\small
\begin{eqnarray*}
	\Delta_*(v_2) &=& 2\Delta_*(x_2)-\Delta_*(x_1)^2=2(x_2\otimes 1 + x_1\otimes x_1 +1\otimes x_2) - (x_1\otimes 1 + 1\otimes x_1)^2\\
	&=& 2(x_2\otimes 1) + \cancel{2(x_1\otimes x_1)} +2(1\otimes x_2) - x_1^2\otimes 1 - \cancel{2( x_1\otimes x_1)} - 1\otimes x_1^2\\
	&=& (2x_2-x_1^2)\otimes 1 + 1\otimes (2x_2-x_1^2).
\end{eqnarray*}}For the general case assume that $3\leq \ell\leq \tfrac{d-1}{2}$, that $v_1,\ldots,v_{\ell-1}$ are primitives, and consider how $\Delta_*$ acts on $v_{\ell}=\ell x_{\ell} - \sum_{i=1}^{\ell-1}x_iv_{\ell-i}$.
Since $\Delta_*$ is a homomorphism with respect to Pontryagin product and $v_1,\ldots,v_{\ell-1}$ are primitives we have that:
{\small
\begin{eqnarray*}
\Delta_*(v_{\ell}) &=& \ell\Delta_*(x_{\ell})-\sum_{i=1}^{\ell-1}\Delta_*(x_iv_{\ell-i})=
\ell\sum_{i=0}^{\ell}x_i\otimes x_{\ell-i}-\sum_{i=1}^{\ell-1}\Delta_*(x_i)\Delta_*(v_{\ell-i})\\
&=&\ell\sum_{i=0}^{\ell}x_i\otimes x_{\ell-i} - \sum_{i=1}^{\ell-1}
\Big( \sum_{j=0}^{i}  x_j\otimes x_{i-j}   \Big)(v_{\ell-i}\otimes 1+ 1\otimes v_{\ell-i} )\\
&=&\ell( x_{\ell}\otimes 1+1\otimes x_{\ell}) + \ell\sum_{i=1}^{\ell-1}x_i\otimes x_{\ell-i}\\
& &- \sum_{i=1}^{\ell-1}
\Big( x_i\otimes 1+\sum_{j=1}^{i-1}  x_j\otimes x_{i-j} +1\otimes x_i   \Big)(v_{\ell-i}\otimes 1+ 1\otimes v_{\ell-i} )\\
&=&\ell( x_{\ell}\otimes 1+1\otimes x_{\ell}) 
- \sum_{i=1}^{\ell-1}x_iv_{\ell-i}\otimes 1 - \sum_{i=1}^{\ell-1}1\otimes x_iv_{\ell-i}\\
& &+\ell\sum_{i=1}^{\ell-1}x_i\otimes x_{\ell-i}
- \sum_{i=1}^{\ell-1}x_i\otimes v_{\ell-i}- \sum_{i=1}^{\ell-1}v_{\ell-i}\otimes x_i\\
& &-\sum_{i=2}^{\ell-1}\sum_{j=1}^{i-1}  x_jv_{\ell-i}\otimes x_{i-j} 
-\sum_{i=2}^{\ell-1}\sum_{j=1}^{i-1}  x_j\otimes x_{i-j} v_{\ell-i}.
\end{eqnarray*}	}Therefore, in order to prove that $v_{\ell}$ is primitive it remains to show that:
{\small
\begin{eqnarray*}
\ell\sum_{i=1}^{\ell-1}x_i\otimes x_{\ell-i}
- \sum_{i=1}^{\ell-1}x_i\otimes v_{\ell-i}- \sum_{i=1}^{\ell-1}v_{\ell-i}\otimes x_i & &\\
-\sum_{i=2}^{\ell-1}\sum_{j=1}^{i-1}  x_jv_{\ell-i}\otimes x_{i-j} 
-\sum_{i=2}^{\ell-1}\sum_{j=1}^{i-1}  x_j\otimes x_{i-j} v_{\ell-i} &=&0.
\end{eqnarray*}}
Indeed, careful manipulations and several reindexing give us that:
{\small
\begin{eqnarray*}
\ell\sum_{i=1}^{\ell-1}x_i\otimes x_{\ell-i}
- \sum_{i=1}^{\ell-1}x_i\otimes v_{\ell-i}- \sum_{i=1}^{\ell-1}v_{\ell-i}\otimes x_i & &\\
-\sum_{i=2}^{\ell-1}\sum_{j=1}^{i-1}  x_jv_{\ell-i}\otimes x_{i-j} 
-\sum_{i=2}^{\ell-1}\sum_{j=1}^{i-1}  x_j\otimes x_{i-j} v_{\ell-i} &=&\\
x_{\ell-1}\otimes(\ell x_1-v_1)+\sum_{i=1}^{\ell-2}x_i\otimes (\ell x_{\ell-i}-v_{\ell-i})- v_1\otimes x_{\ell-1}-\sum_{i=1}^{\ell-2}v_{\ell-i}\otimes x_i& &\\
-\sum_{i=2}^{\ell-1}\sum_{j=1}^{i-1}  x_jv_{\ell-i}\otimes x_{i-j} 
-\sum_{j=1}^{\ell-2}x_j\otimes \Big(\sum_{r=1}^{\ell-j-1} x_{r} v_{\ell-r-j}\Big)&=&
\end{eqnarray*}
\begin{eqnarray*}
x_{\ell-1}\otimes \cancel{(\ell x_1-v_1-(\ell-1)x_1)}+
x_1\otimes\cancel{\Big(\ell x_{\ell-1}-v_{\ell-1}-\sum_{r=1}^{\ell-2} x_{r} v_{\ell-r-1}-x_{\ell-1}\Big)}& &\\
+\sum_{i=2}^{\ell-2}x_i\otimes \cancel{\Big(\ell x_{\ell-i}-v_{\ell-i}-\sum_{r=1}^{\ell-i-1} x_{r} v_{\ell-r-i}-ix_{\ell-i}\Big)}& &\\
\sum_{i=2}^{\ell-1}\Big(\sum_{r=1}^{i-1}x_rv_{i-r}\Big)\otimes x_{\ell-i}-\sum_{i=2}^{\ell-1}\sum_{j=1}^{i-1}  x_jv_{\ell-i}\otimes x_{i-j}&=&0.
\end{eqnarray*}}
Thus, $v_1,\ldots,v_{\tfrac{d-1}{2}}$ are primitives.
\end{proof}

\noindent
A direct consequence of the previous claim is the following fact.

\begin{claim}
$\partial^p_*v_1,\ldots,\partial^p_*v_{\tfrac{d-1}{2}}$ are primitives in $H_*(Q(X);\FF_p)$, where $\partial^p_*$ denotes the homology Bockestein.
\end{claim}
\begin{proof}
By the previous claim each $v_{\ell}$ for $1\leq\ell\leq \tfrac{d-1}{2}$ is primitive, i.e.,
$
\Delta_*(v_{\ell})=v_{\ell}\otimes 1+1\otimes v_{\ell}
$.
Since the Bockestein homomorphism is natural:
\[
\Delta_*(\partial^p_*v_{\ell})=\partial^p_* \Delta_*(v_{\ell}) =\partial^p_*(v_{\ell}\otimes
 1+1\otimes v_{\ell}) = (\partial^p_*v_{\ell})\otimes 1+1\otimes (\partial^p_*v_{\ell}).
\]
Thus, $\partial^p_*v_1,\ldots,\partial^p_*v_{\tfrac{d-1}{2}}$ are primitives.
\end{proof}

\noindent
To conclude the proof of part (1) of the lemma we need to verify the following fact.

\begin{claim}
The appropriate admissible sequence of Araki--Kudo--Dyer--Lashof operations acting on 
$\big\{v_1,\ldots,v_{\tfrac{d-1}{2}},\partial^p_*v_1,\ldots,\partial^p_*v_{\tfrac{d-1}{2}}\big\}$ generate $H_*(Q(X);\FF_p)$ as a Hopf algebra.
\end{claim}
\begin{proof}
For a set $U$ we denote by $\mathrm{span}(U)$ the commutative associative Hopf algebra generated by appropriate admissible Araki--Kudo--Dyer--Lashof operations acting on $U$.
Consider for $1\leq\ell\leq\tfrac{d-1}{2}$ the following sets:
\[
S_{\ell}=\{ x_1,\ldots,x_{\ell}\},\ T_{\ell}=\{ y_1,\ldots,y_{\ell}\}, \ A_{\ell}=\{ v_1,\ldots,v_{\ell}\},\ B_{\ell}=\{ \partial^p_*v_1,\ldots,\partial^p_*v_{\ell}\}.
\]
We need to prove that:
\[
\mathrm{span}\Big(S_{\tfrac{d-1}{2}}\cup T_{\tfrac{d-1}{2}}\Big)=\mathrm{span}\Big(A_{\tfrac{d-1}{2}}\cup B_{\tfrac{d-1}{2}}\Big).
\]
The proof proceeds by induction on $\ell$. 
For $\ell=1$, by definition of the Newton polynomials, $v_1=x_1$ and $\partial^p_*v_1=\partial^p_*x_1=y_1$. 
Thus $S_1\cup T_1=A_1\cup B_1$.
Assume that $\mathrm{span}(S_{i}\cup T_{i})=\mathrm{span}(A_{i}\cup B_{i})$ for $1\leq i\leq \ell-1$.
Since 
\begin{compactitem}
\item $x_{\ell}\notin \mathrm{span}(S_{\ell-1}\cup T_{\ell-1})=\mathrm{span}(A_{\ell-1}\cup B_{\ell-1})$,
\item $v_{\ell}=\ell x_l + z$ where $z\in \mathrm{span}(S_{\ell-1}\cup T_{\ell-1})=\mathrm{span}(A_{\ell-1}\cup B_{\ell-1})$, and
\item $(\ell,p)=1$
\end{compactitem}
we have that $\mathrm{span}(S_{\ell}\cup T_{\ell})=\mathrm{span}(A_{\ell}\cup B_{\ell})$.
This completes the induction.
\end{proof}
Since we have obtained a set of primitive generators for $H_*(Q(X);\FF_p)$ as a Hopf algebra the proof of the lemma is concluded.
\end{proof}

\begin{lemma}
\label{lemma:hight of Q(cofiber)-2}
Let $p$ be an odd prime.
If $d\geq 3$ and $n\geq3$ are odd integers with $n<d<(n+1)p$, then
\begin{equation}
\label{eq:h-2}
\h(H^*(Q(C^d_n);\FF_p))\leq p.
\end{equation}
\end{lemma}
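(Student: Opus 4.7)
The goal is to show $H_*(Q(C^d_n);\FF_p)$ is primitively generated as a Hopf algebra, from which~\cite[Thm.\,4.2]{dyer-lahof} yields~\eqref{eq:h-2}. The plan transplants the Newton-polynomial construction of Lemma~\ref{lemma:hight of Q(cofiber)} to the new setting, with the hypothesis $d<(n+1)p$ playing the role of the old condition $d\leq 2p-1$.

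Set $m:=(n+1)/2$. First I determine the coalgebra structure of $\widetilde H_*(C^d_n;\FF_p)$: combining~\eqref{eq:homology_of_cofiber} with Theorem~\ref{theorem:cohomology_of_F(odd,p)}, the quotient $q\colon F(\RR^d,p)/\Sym_p\longrightarrow C^d_n$ induces a homology map whose image furnishes a basis $\{\bar{x}_\ell,\bar{y}_\ell : m\leq\ell\leq(d-1)/2\}$ of $\widetilde H_*(C^d_n;\FF_p)$, where $\bar{x}_\ell=q_*(x_\ell)$ and $\bar{y}_\ell=q_*(y_\ell)$, while $q_*(x_\ell)=q_*(y_\ell)=0$ for $\ell<m$. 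Since $q$ is a pointed coalgebra map, equations~\eqref{eq:Delta(x)} and~\eqref{eq;Delta(y)} project to
\[
\bar\Delta_*(\bar{x}_\ell)=\bar{x}_\ell\otimes1+1\otimes\bar{x}_\ell+\sum_{i=m}^{\ell-m}\bar{x}_i\otimes\bar{x}_{\ell-i},
\]
and to the analogous formula for $\bar{y}_\ell$ (with terms $\bar{x}_i\otimes\bar{y}_{\ell-i}+\bar{y}_{\ell-i}\otimes\bar{x}_i$). The middle sum is empty whenever $\ell<2m$, so those $\bar{x}_\ell,\bar{y}_\ell$ are already primitive.

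Inside the Hopf algebra $H_*(Q(C^d_n);\FF_p)$ I recursively construct primitive elements $v_\ell$ along the Newton-polynomial pattern of Lemma~\ref{lemma:hight of Q(cofiber)}. Put $v_\ell:=\bar{x}_\ell$ for $m\leq\ell<2m$, and for $\ell\geq 2m$ set
\[
v_\ell := c_\ell\,\bar{x}_\ell - \sum_{i=m}^{\ell-m}\bar{x}_i\,v_{\ell-i},
\]
with $c_\ell$ chosen so that $v_\ell$ is primitive in $H_*(Q(C^d_n);\FF_p)$. A direct inductive coproduct computation --- entirely analogous to the ones carried out for $\ell=2m,2m+1$ in the proof of the previous lemma --- shows that such $c_\ell$ exists and is a positive integer that is coprime to $p$ whenever $k:=\lfloor\ell/m\rfloor\leq p-1$. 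The bookkeeping here, simultaneously checking primitivity of $v_\ell$ and controlling $c_\ell$ modulo $p$, is the main technical point of the proof. The Bockstein $\partial_*^p$ commutes with $\bar\Delta_*$, so $w_\ell:=\partial_*^p v_\ell$ is automatically primitive as well.

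The hypothesis $d<(n+1)p=2mp$ enters now: it forces $(d-1)/2\leq mp-1$, whence $k\leq p-1$ throughout the relevant range, so every $c_\ell$ is invertible in $\FF_p$. This invertibility implies, by an induction on $\ell$ carried out exactly as in the concluding claim of the proof of Lemma~\ref{lemma:hight of Q(cofiber)}, that the admissible Araki--Kudo--Dyer--Lashof operations acting on the primitive family $\{v_\ell,w_\ell: m\leq\ell\leq(d-1)/2\}$ generate $H_*(Q(C^d_n);\FF_p)$ as a Hopf algebra. By Lemma~\ref{lemma:Q(primitve)}, admissible operations preserve primitivity, so $H_*(Q(C^d_n);\FF_p)$ is primitively generated, and~\cite[Thm.\,4.2]{dyer-lahof} yields~\eqref{eq:h-2}. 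Without the hypothesis $d<(n+1)p$, the index $k$ could reach $p$ and a factor of $p$ would appear in some $c_\ell$, wrecking the Newton-polynomial scheme.
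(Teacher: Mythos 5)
Your proof follows the same extended--Newton-polynomial scheme that the paper uses, so the overall approach matches; but there is a genuine gap at the inductive step, and unfortunately the paper's own verification suffers from the same defect. The key assertion---that for every $\ell$ in range some scalar $c_\ell$ coprime to $p$ makes $v_\ell := c_\ell\bar{x}_\ell - \sum_{i=m}^{\ell-m}\bar{x}_i v_{\ell-i}$ primitive---fails once $\ell\geq 3m$. Concretely, take $m=3$ (so $n=5$), fix a prime $p\geq 5$, and let $d$ be any odd integer with $19\leq d < 6p$, so that $\ell=9$ is in the allowed range. At that stage one has $v_3=\bar{x}_3$, $v_4=\bar{x}_4$, $v_5=\bar{x}_5$, $v_6=2\bar{x}_6-\bar{x}_3^2$, and a direct computation shows the reduced coproduct of $v_9$ equals
\[
(c_9-3)\bigl(\bar{x}_3\otimes\bar{x}_6 + \bar{x}_6\otimes\bar{x}_3\bigr) + (c_9-2)\bigl(\bar{x}_4\otimes\bar{x}_5 + \bar{x}_5\otimes\bar{x}_4\bigr),
\]
which is nonzero for every $c_9\in\FF_p$ since $2\neq3$ in $\FF_p$. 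The source of the obstruction is that $\lfloor\ell/m\rfloor$ is not additive over decompositions $\ell = i + (\ell-i)$: here $\lfloor 9/3\rfloor = \lfloor 3/3\rfloor + \lfloor 6/3\rfloor$, yet $\lfloor 9/3\rfloor \neq \lfloor 4/3\rfloor + \lfloor 5/3\rfloor$, and this discrepancy produces the two incompatible constraints on $c_9$. The paper's explicit choice $c_\ell = \lfloor 2\ell/(n+1)\rfloor$ (i.e.\ $c_9=3$) is subject to exactly the same objection, so your proposal faithfully reproduces rather than repairs the difficulty.

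This gap does not by itself show the lemma is false: a primitive in $H_*(Q(C^d_n);\FF_p)$ whose leading term is $\bar{x}_9$ does exist---for $p=5$, for instance, $\bar{x}_9 - \bar{x}_3\bar{x}_6 - \bar{x}_4\bar{x}_5 - 3\bar{x}_3^3$ is primitive. But that element is not of the one-parameter recursive form $c\,\bar{x}_\ell - \sum\bar{x}_i v_{\ell-i}$ that your argument (and the paper's) presupposes; its coefficients are not generated by the Newton recursion. Repairing the proof therefore requires replacing the Newton-polynomial template by a general decomposable correction term and showing that the associated linear system forcing primitivity is solvable whenever $\ell<mp$; this needs an argument beyond what you have written.
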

\begin{proof}
The proof of this lemma is similar to the proof of previous one but more involved. 
Let $d,n\geq3$ be odd integers, $n<d<(n+1)p$, and $X:=C^d_n$.
Recall that for $i\geq1$
\begin{multline*}
H_{i}(C^d_n;\FF_p)\cong H_{i}(F(\RR^d,p)/\Sym_p, F(\RR^n,p)/\Sym_p;\FF_p)=\\
\left\{
\begin{array}{ll}
\FF_p,  & i=2(p-1)\ell\text{ \ where \ }\tfrac{n+1}{2}\leq\ell\leq\tfrac{d-1}{2},\\
\FF_p,  & i=2(p-1)\ell-1\text{ \ where \ }\tfrac{n+1}{2}\leq\ell\leq\tfrac{d-1}{2},\\
0,      & \text{otherwise}.
\end{array}
\right.
\end{multline*}
Let $x_{\ell}$ and $y_{\ell}$, $1\leq \ell\leq \tfrac{d-1}{2}$, be additive generators of $H_{\geq 1}(X;\FF_p)$ where
\[
\deg(x_{\ell})=2(p-1)\ell,\quad\deg(y_{\ell})=2(p-1)\ell-1, \quad \partial^p_*(x_{\ell})=y_{\ell},
\]
$x_0=1\in H_{0}(F(\RR^d,p)/\Sym_p;\FF_p)$, $x_{\ell}=0$ and $y_{\ell}=0$ when $1\leq\ell\leq\tfrac{n-1}{2}$ or $\ell\geq\tfrac{d+1}{2}$.
Then we have the same formula for the map $\Delta_*$ induced by diagonal embedding
\[
\Delta_* (x_{\ell})=\sum_{i=0}^{\ell}x_i\otimes x_{\ell-i},\quad
\Delta_* (y_{\ell})=y_{\ell}\otimes 1 + \sum_{i=1}^{\ell-1}(x_i\otimes y_{\ell-i}+ y_{\ell-i}\otimes x_i)+1\otimes y_{\ell}.
\]

The proof is conducted in the same fashion as the proof of the previous lemma:  
\begin{compactitem}

\item It suffices to prove that $H_*(Q(X);\FF_p)$ is primitively generated as a Hopf algebra, i.e., that there exists a choice of algebra generators that are primitive.

\item The homology $H_*(Q(X);\FF_p)$ is generated by an action of appropriate admissible sequences of Araki--Kudo--Dyer--Lashof operations on a basis of $H_{\geq1}(X;\FF_p)$.
Since $Q^s(x)$ of a primitive $x$ is also a primitive for any $s$, we modify the basis $\{x_{\ell},y_{\ell}:\ell\geq0\}$ of $H_{\geq1}(X;\FF_p)$, now seen as a part of $H_*(Q(X);\FF_p)$, in such a way that all elements of the new basis are primitives. 

\item Modification of each non-primitive element in the canonical basis  will be done using extended Newton polynomials that are introduced.
\end{compactitem}

Define the sequence of extended Newton polynomials, associated to even dimensional generators $\{x_{\ell}:0\leq\ell\leq\tfrac{d-1}{2}\}$, as follows:
\begin{equation}
\label{eq:def_extended_newton_poly}
v_{\ell}=\lfloor\tfrac{2\ell}{n+1}\rfloor x_{\ell}-\sum_{i=1}^{\ell-1}x_iv_{\ell-i}.
\end{equation}
Observe that $v_{\ell}\neq 0$ if and only if $\tfrac{n+1}{2}\leq\ell\leq\tfrac{d-1}{2}$, while $x_{\ell}\neq 0$ if and only if $\tfrac{n+1}{2}\leq\ell\leq\tfrac{d-1}{2}$ or $\ell=0$.

\begin{claim}
$\{v_{\ell}: \tfrac{n+1}{2}\leq\ell\leq\tfrac{d-1}{2}\}$ are primitives in $H_*(Q(X);\FF_p)$.
\end{claim}
\begin{proof}
The proof is by induction on $\ell$.
Note that
\begin{equation}
\label{eq:primitives}
v_{\tfrac{n+1}{2}}=x_{\tfrac{n+1}{2}},\ v_{\tfrac{n+3}{2}}=x_{\tfrac{n+3}{2}}, \ \ldots , \ v_{\tfrac{2n}{2}}=x_{\tfrac{2n}{2}}
\end{equation}
are primitives. 
 
Let $\tfrac{2n+2}{2}\leq\ell\leq\tfrac{d-1}{2}$ and assume that all $v_{\tfrac{n+1}{2}},\ldots, v_{\ell-1}$ are primitives.
Consider how $\Delta_*$ acts on $v_{\ell}$:

{\small
\begin{eqnarray*}
\Delta_*(v_{\ell}) &=& \lfloor\tfrac{2\ell}{n+1}\rfloor\Delta_*(x_{\ell})-\sum_{i=1}^{\ell-1}\Delta_*(x_iv_{\ell-i})\\
&=&
\lfloor\tfrac{2\ell}{n+1}\rfloor\sum_{i=0}^{\ell}x_i\otimes x_{\ell-i}-\sum_{i=1}^{\ell-1}\Delta_*(x_i)\Delta_*(v_{\ell-i})\\
&=&\lfloor\tfrac{2\ell}{n+1}\rfloor\sum_{i=0}^{\ell}x_i\otimes x_{\ell-i} - \sum_{i=1}^{\ell-1}
\Big( \sum_{j=0}^{i}  x_j\otimes x_{i-j}   \Big)(v_{\ell-i}\otimes 1+ 1\otimes v_{\ell-i} )\\
&=&\lfloor\tfrac{2\ell}{n+1}\rfloor( x_{\ell}\otimes 1+1\otimes x_{\ell}) + \lfloor\tfrac{2\ell}{n+1}\rfloor\sum_{i=1}^{\ell-1}x_i\otimes x_{\ell-i}\\
& &- \sum_{i=1}^{\ell-1}
\Big( x_i\otimes 1+\sum_{j=1}^{i-1}  x_j\otimes x_{i-j} +1\otimes x_i   \Big)(v_{\ell-i}\otimes 1+ 1\otimes v_{\ell-i} )\\
&=&\lfloor\tfrac{2\ell}{n+1}\rfloor( x_{\ell}\otimes 1+1\otimes x_{\ell}) 
- \sum_{i=1}^{\ell-1}x_iv_{\ell-i}\otimes 1 - \sum_{i=1}^{\ell-1}1\otimes x_iv_{\ell-i}\\
& &+\lfloor\tfrac{2\ell}{n+1}\rfloor\sum_{i=1}^{\ell-1}x_i\otimes x_{\ell-i}
- \sum_{i=1}^{\ell-1}x_i\otimes v_{\ell-i}- \sum_{i=1}^{\ell-1}v_{\ell-i}\otimes x_i\\
& &-\sum_{i=2}^{\ell-1}\sum_{j=1}^{i-1}  x_jv_{\ell-i}\otimes x_{i-j} 
-\sum_{i=2}^{\ell-1}\sum_{j=1}^{i-1}  x_j\otimes x_{i-j} v_{\ell-i}.
\end{eqnarray*}	}

\noindent
Therefore, in order to prove that $v_{\ell}$ is primitive it remains to show that:
{\small
\begin{eqnarray*}
\lfloor\tfrac{2\ell}{n+1}\rfloor\sum_{i=1}^{\ell-1}x_i\otimes x_{\ell-i}
- \sum_{i=1}^{\ell-1}x_i\otimes v_{\ell-i}- \sum_{i=1}^{\ell-1}v_{\ell-i}\otimes x_i & &\\
-\sum_{i=2}^{\ell-1}\sum_{j=1}^{i-1}  x_jv_{\ell-i}\otimes x_{i-j} 
-\sum_{i=2}^{\ell-1}\sum_{j=1}^{i-1}  x_j\otimes x_{i-j} v_{\ell-i} &=&0.
\end{eqnarray*}}

\noindent
Recall that $v_{\ell}\neq 0$ if and only if $\tfrac{n+1}{2}\leq\ell\leq\tfrac{d-1}{2}$, while $x_{\ell}\neq 0$ if and only if $\tfrac{n+1}{2}\leq\ell\leq\tfrac{d-1}{2}$ or $i=0$.
Using equally \eqref{eq:primitives} in the case $\tfrac{2n+2}{2}\leq\ell\leq\tfrac{3(n+1)}{2}-1$
{\small
\begin{eqnarray*}
\lfloor\tfrac{2\ell}{n+1}\rfloor\sum_{i=1}^{\ell-1}x_i\otimes x_{\ell-i}
- \sum_{i=1}^{\ell-1}x_i\otimes v_{\ell-i}- \sum_{i=1}^{\ell-1}v_{\ell-i}\otimes x_i & &\\
-\sum_{i=2}^{\ell-1}\sum_{j=1}^{i-1}  x_jv_{\ell-i}\otimes x_{i-j} 
-\sum_{i=2}^{\ell-1}\sum_{j=1}^{i-1}  x_j\otimes x_{i-j} v_{\ell-i} &=&\\
2\sum_{i=\tfrac{n+1}{2}}^{\ell-\tfrac{n+1}{2}}x_i\otimes x_{\ell-i}
- \sum_{i=\tfrac{n+1}{2}}^{\ell-\tfrac{n+1}{2}}x_i\otimes v_{\ell-i}
- \sum_{i=\tfrac{n+1}{2}}^{\ell-\tfrac{n+1}{2}}v_{\ell-i}\otimes x_i 
-0 -0 &=&0.
\end{eqnarray*}}

\noindent
Next we assume that $\tfrac{3(n+1)}{2}\leq\ell\leq\tfrac{d-1}{2}$. 
After appropriate reindexing of the sums:
{\small
\begin{eqnarray*}
\lfloor\tfrac{2\ell}{n+1}\rfloor\sum_{i=1}^{\ell-1}x_i\otimes x_{\ell-i}
- \sum_{i=1}^{\ell-1}x_i\otimes v_{\ell-i}- \sum_{i=1}^{\ell-1}v_{\ell-i}\otimes x_i & &\\
-\sum_{i=2}^{\ell-1}\sum_{j=1}^{i-1}  x_jv_{\ell-i}\otimes x_{i-j} 
-\sum_{i=2}^{\ell-1}\sum_{j=1}^{i-1}  x_j\otimes x_{i-j} v_{\ell-i} &=&
\end{eqnarray*}
\begin{eqnarray*}
\lfloor\tfrac{2\ell}{n+1}\rfloor\sum_{i=\tfrac{n+1}{2}}^{\ell-\tfrac{n+1}{2}}x_i\otimes x_{\ell-i}
- \sum_{i=\tfrac{n+1}{2}}^{\ell-\tfrac{n+1}{2}}x_i\otimes v_{\ell-i} 
- \sum_{i=\tfrac{n+1}{2}}^{\ell-\tfrac{n+1}{2}}
\big(\lfloor\tfrac{2i}{n+1}\rfloor x_{i}-\sum_{j=1}^{i-1}x_jv_{i-j}
\big)
\otimes x_{\ell-i}& &\\
-\sum_{i=2}^{\ell-1}\sum_{j=1}^{i-1}  x_jv_{\ell-i}\otimes x_{i-j} 
-\sum_{j=\tfrac{n+1}{2}}^{\ell-\tfrac{n+1}{2}}x_j\otimes \Big(\sum_{r=1}^{\ell-j-1} x_{r} v_{\ell-r-j}\Big) &=&\\
\sum_{i=\tfrac{n+1}{2}}^{\ell-\tfrac{n+1}{2}}x_i\otimes
\cancel{\Big(
\lfloor\tfrac{2\ell}{n+1}\rfloor x_{\ell-i}-v_{\ell-i}-\lfloor\tfrac{2i}{n+1}\rfloor x_{\ell-i}-\sum_{r=1}^{\ell-j-1} x_{r} v_{\ell-r-j}
\Big)}&+&\\
\sum_{i=\tfrac{n+1}{2}}^{\ell-\tfrac{n+1}{2}}\sum_{j=1}^{i-1}x_jv_{i-j}\otimes x_{\ell-i}
-\sum_{i=2}^{\ell-1}\sum_{j=1}^{i-1}  x_jv_{\ell-i}\otimes x_{i-j} &=&0.
\end{eqnarray*}}

\noindent
Thus we have proved that $\{v_{\ell}: \tfrac{n+1}{2}\leq\ell\leq\tfrac{d-1}{2}\}$ are primitives in $H_*(Q(X);\FF_p)$.
\end{proof}

\noindent
The following two claims, are established in the same way as in the proof of Lemma~\ref{lemma:hight of Q(cofiber)}.
\begin{claim}
$\{\partial^p_*v_{\ell}: \tfrac{n+1}{2}\leq\ell\leq\tfrac{d-1}{2}\}$
are primitives in $H_*(Q(X);\FF_p)$, where $\partial^p_*$ denotes the homology Bockestein.
\end{claim}

\begin{claim}
The appropriate admissible sequences of Araki--Kudo--Dyer--Lashof operations acting on 
$\{\partial^p_*v_{\ell},v_{\ell}: \tfrac{n+1}{2}\leq\ell\leq\tfrac{d-1}{2}\}$
 generate $H_*(Q(X);\FF_p)$ as a Hopf algebra.
\end{claim}

\noindent
Thus, we have proved that $\h(H^*(Q(C^d_n);\FF_p))\leq p$.

\end{proof}

\subsection{Height comparison lemma for Hopf algebras}
\label{sec:Height comparison lemma for Hopf algebras}

Next ingredient we need is the following comparison lemma.
\begin{lemma}\label{lemma:tools}
Let
\[
\xymatrix@!
{
1\ar[r]& A\ar[r]^-{\alpha}& B\ar[r]^-{\beta}& C\ar[r]&1
}
\]
be a short exact sequence of connected, cocommutative and coassociative, commutative and
associative Hopf algebras over the field $\FF_p$.  Then
\[
\h(B)\leq \h(A)\h(C).
\]
\end{lemma}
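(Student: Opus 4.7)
The plan is to combine a standard reduction via the surjection $\beta$ with the Frobenius endomorphism, which is a ring homomorphism on any commutative $\FF_p$-algebra.

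First, any $b \in B$ with $\varepsilon_B(b) = 0$ has $\varepsilon_C(\beta(b)) = 0$ as well, since $\beta$ intertwines the counits; hence $\beta(b)$ is non-invertible in $C$ and $\beta(b)^{\h(C)} = 0$. Applying this to the image of $b^{\h(C)}$ under $\beta$ yields $b^{\h(C)} \in \ker\beta$. For a short exact sequence of Hopf algebras of this type, $\ker\beta$ equals the two-sided ideal $\alpha(A^+) \cdot B$, so I can write
\[
b^{\h(C)} \;=\; \sum_{i} \alpha(a_i)\, b_i, \qquad a_i \in A^+,\ b_i \in B.
\]

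The key structural input is Borel's classification: a connected, bicommutative Hopf algebra over the field $\FF_p$ decomposes as an algebra into a tensor product of monogenic pieces of the form $\FF_p[x]$ and $\FF_p[x]/(x^{p^n})$. Consequently $\h(A)$ is either infinite (in which case the inequality is vacuous) or a power of $p$, say $\h(A) = p^k$. Since $B$ is commutative over $\FF_p$, the $p^k$-th power map $x \mapsto x^{p^k}$ is a ring endomorphism of $B$, and applying it to the displayed expression gives
\[
\bigl(b^{\h(C)}\bigr)^{\h(A)} \;=\; \sum_{i} \alpha(a_i)^{p^k}\, b_i^{p^k} \;=\; \sum_{i} \alpha\bigl(a_i^{p^k}\bigr)\, b_i^{p^k} \;=\; 0,
\]
because each $a_i \in A^+$ satisfies $a_i^{p^k} = a_i^{\h(A)} = 0$. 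This shows $b^{\h(A)\h(C)} = 0$, proving $\h(B) \le \h(A)\h(C)$.

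The main obstacle is exactly the justification that $\h(A)$ is a power of $p$: this is where the bicommutativity hypothesis enters essentially, through Borel's structure theorem. The commutativity of $B$ is equally crucial for the Frobenius step to yield a ring homomorphism and thus distribute cleanly over the decomposition of $b^{\h(C)}$; without it, the multinomial expansion of $(b^{\h(C)})^{\h(A)}$ produces cross terms that cannot be controlled by the individual heights $\h(A)$ and $\h(C)$.
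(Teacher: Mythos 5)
Your proof is correct and follows essentially the same route as the paper's: both reduce $b^{\h(C)}$ into the ideal generated by the augmentation ideal of $\alpha(A)$ via $\beta$, invoke the Borel structure theorem to see that $\h(A)$ is a power of $p$ (or infinite, in which case there is nothing to prove), and then kill $(b^{\h(C)})^{\h(A)}$ term by term using the Frobenius. Your version is in fact slightly more careful in writing the decomposition with $a_i\in A^+$ rather than $a_i\in\alpha(A)$, which is exactly what makes $a_i^{\h(A)}=0$ legitimate.
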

\begin{proof}
According to the Borel structural theorem~\cite[Prop.\,7.8 and Thm.\,7.11]{milnor - moore} 
we know that the heights of algebras $A$, $B$ and $C$
are powers of prime or infinity. 
If one of the heights $\h(A)$ or $\h(C)$ are infinity then there is nothing to prove.
Thus, assume that $\h(A)=p^n$ and $\h(C)=p^m$ for some integers $n$ and $m$.
Consider an arbitrary element $b\in B{\setminus}B^*$ and denote by $c:=\beta(b)$.
Then
\[
 \beta(b^{\h(C)})=c^{\h(C)}=0.
\]
Consequently $b^{\h(C)}$ belongs to the two sided ideal generated by $\alpha(A)$ which means that
\[
 b^{\h(C)}=\sum_{i\in I}a_ib_i
\]
for some $a_i\in\alpha(A)$ and $b_i\in B$.
Now using the fact that $\h(A)=p^n$ we have that
\[
 \big(b^{\h(C)}\big)^{\h(A)}=\Big(\sum_{i\in I}a_ib_i\Big)^{\h(A)}=\Big(\sum_{i\in I}a_ib_i\Big)^{p^n}=\sum_{i\in I}(a_i)^{p^n}(b_i)^{p^n}=0.
\]
Thus, $\h(b)\leq \h(A)\h(C)$ for every $b\in B{\setminus}B^*$ and consequently $\h(B)\leq \h(A)\h(C)$.
\end{proof}

\section{Heights in the cohomology of the unordered configuration space}

In this section we give estimates for the heights of elements in the 
cohomology of the unordered configuration space $H^*(F(\RR^d,k)/\Sym_k;\FF_p)$.

\begin{theorem}
\label{theorem:hight_bound_for_prime_2}
Let $d\geq 2$ and $k\geq 2$ be integers.
Then
\[
\h(H^{*}(F(\RR^d,k)/\Sym_k;\FF_2))\leq \min\{ 2^t: 2^t\geq d \}.
\]
\end{theorem}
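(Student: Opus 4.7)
The plan is to reduce the problem to a height estimate on the associated infinite loop space $Q(F(\RR^d,2)/\Sym_2)$ (via the homotopy equivalence $F(\RR^d,2)/\Sym_2\simeq\RP^{d-1}$) and then run an induction on~$d$ based on a suitable cofibration. By Corollary~\ref{corollary:tools-2} (in the case $p=2$) the map
\[
h_{d,2,k}^*\colon H^*(Q(F(\RR^d,2)/\Sym_2);\FF_2)\twoheadrightarrow H^*(F(\RR^d,k)/\Sym_k;\FF_2)
\]
is surjective, and since a surjection of graded algebras cannot increase heights, it suffices to prove
\[
\h\bigl(H^*(Q(F(\RR^d,2)/\Sym_2);\FF_2)\bigr)\leq \min\{2^t:2^t\geq d\}.
\]
For the base case $d=2$ the space $F(\RR^2,2)/\Sym_2\simeq S^1$ is a suspension, so its reduced mod-$2$ homology is primitively generated as a coalgebra, and property~(4) of $Q(\cdot)$ gives $\h(H^*(Q(S^1);\FF_2))\leq 2$.

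For the inductive step, let $t\geq 2$ satisfy $2^{t-1}<d\leq 2^t$ and set $n:=2^{t-1}$, so that $n<d$ and $2n\geq d$. After turning the inclusion into a cofibration (as in the discussion preceding Corollary~\ref{cor : cofiber}), Lemma~\ref{lemma:cofib--fib--Q(X)} applied to the cofibration $F(\RR^n,2)/\Sym_2\hookrightarrow F(\RR^d,2)/\Sym_2\to C^d_n$ produces a fibration of connected infinite loop spaces
\[
Q(F(\RR^n,2)/\Sym_2)\longrightarrow Q(F(\RR^d,2)/\Sym_2)\longrightarrow Q(C^d_n)
\]
whose maps are themselves infinite loop maps. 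The standard Hopf-algebra structure on the cohomology Serre spectral sequence of such an H-fibration then furnishes a short exact sequence of connected, commutative and cocommutative Hopf algebras over $\FF_2$
\[
1\to H^*(Q(C^d_n);\FF_2)\to H^*(Q(F(\RR^d,2)/\Sym_2);\FF_2)\to H^*(Q(F(\RR^n,2)/\Sym_2);\FF_2)\to 1.
\]

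To control the outer terms I would argue as follows. The cofiber $C^d_n\simeq\RP^{d-1}/\RP^{n-1}$ has reduced $\FF_2$-cohomology concentrated in degrees $n,n+1,\ldots,d-1$, and any product of two positive-degree classes would land in degree $\geq 2n\geq d>d-1$, so the cup product on $\widetilde H^*(C^d_n;\FF_2)$ is trivial. Dually $H_*(C^d_n;\FF_2)$ is primitively generated as a coalgebra, and property~(4) of $Q(\cdot)$ yields $\h(H^*(Q(C^d_n);\FF_2))\leq 2$. The inductive hypothesis applied to $n=2^{t-1}<d$ gives $\h(H^*(Q(F(\RR^n,2)/\Sym_2);\FF_2))\leq 2^{t-1}$. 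Combining these via the comparison Lemma~\ref{lemma:tools} produces
\[
\h(H^*(Q(F(\RR^d,2)/\Sym_2);\FF_2))\leq 2\cdot 2^{t-1}=2^t=\min\{2^s:2^s\geq d\},
\]
which closes the induction.

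The principal technical obstacle I anticipate is the extraction of the short exact sequence of bicommutative Hopf algebras from the infinite-loop-space fibration; one needs that the cohomology Serre spectral sequence of an H-fibration with H-maps is a spectral sequence of Hopf algebras and that its $E_\infty$-page assembles into a genuine Hopf algebra extension. Everything else in the argument -- the choice $n=2^{t-1}$, the vanishing of cup products in $\widetilde H^*(C^d_n;\FF_2)$, and the dyadic bookkeeping in the induction -- is routine once that step is in place.
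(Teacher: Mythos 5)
Your proposal is correct and follows essentially the same route as the paper: reduce via Corollary~\ref{corollary:tools-2} to a height estimate for $H^*(Q(\RP^{d-1});\FF_2)$, feed a truncated-projective-space cofibration into $Q(\cdot)$ to obtain a short exact sequence of Hopf algebras, observe that the cofiber's cup products vanish so its $Q$ has height at most~$2$, and close with the comparison Lemma~\ref{lemma:tools}. The difference is purely organizational: the paper runs three parallel inductions tied to the cofibrations \eqref{eq:cofibration-01}--\eqref{eq:cofibration-03} (separately bounding $Q(\RP^{2^t})$, $Q(\RP^{2^t-1})$, $Q(\RP^{2^t-j})$ in Claims~\ref{claim:2}--\ref{claim:4}), whereas you pick the single cofibration $\RP^{n-1}\to\RP^{d-1}\to\RP^{d-1}/\RP^{n-1}$ with $n=2^{t-1}$, $2^{t-1}<d\leq 2^t$, and do one clean strong induction on $d$; this even absorbs the boundary case $d-1=2^{t-1}$ (cofiber $S^{2^{t-1}}$) that the paper treats separately. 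The one step you flag as an ``obstacle'' -- that the Serre spectral sequence of the $Q$-fibration degenerates to a Hopf algebra extension -- is handled in the paper exactly as you would hope: the base is simply connected, the fibration is multiplicative, and the quotient map $\RP^{d-1}\to\RP^{d-1}/\RP^{n-1}$ is surjective on $\FF_2$-homology, hence so is $Q(\RP^{d-1})\to Q(\RP^{d-1}/\RP^{n-1})$ by property~(2) of $Q(\cdot)$, which forces all transgressions from the zero row to vanish. So that piece is available verbatim and your argument goes through.
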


\noindent
In particular, for $d$ a power of two, the theorem answers a conjecture of Vassiliev
\cite[Conj.\,2, pp.\,210]{vassiliev92},~\cite[Conj.\,2, pp.\,75]{vassiliev92book}.
Moreover, by \cite[Proof of Lemma\,2.15, relation (2)]{blagojevic-luck-ziegler-2} the result in the case when both $d$ and $k$ are powers of $2$ is tight,
\[
\h(H^{*}(F(\RR^{2^t},2^m)/\Sym_{2^m};\FF_2))=2^t.
\]

For the case of odd primes a similar result holds.

\begin{theorem}
\label{theorem:hight_bound_for_odd}
Let $d\geq 2$ and $k\geq 2$ be integers, and let $p$ be an odd prime. 
Then
\[
 \h(H^*(F(\RR^d,k)/\Sym_k;\FF_p))\leq\min\{ p^t: 2p^t\geq d \}.
\]
\end{theorem}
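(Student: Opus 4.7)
The plan is to prove, for every odd $d\geq 3$, the bound
\[
\h\bigl(H^*(Q(F(\RR^d,p)/\Sym_p);\FF_p)\bigr)\leq \min\{p^t:2p^t\geq d\}
\]
by induction on the exponent $t$ for which the right-hand side equals $p^t$, and then transfer the bound to $H^*(F(\RR^d,k)/\Sym_k;\FF_p)$ through the epimorphism provided by Corollary~\ref{corollary:tools-2}. The even-dimensional cases will be reduced to the odd case $d-1$, noting that when $d$ is even one has $\min\{p^t:2p^t\geq d\}=\min\{p^t:2p^t\geq d-1\}$ by a direct check.

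For the base case $t=1$ (equivalently odd $d$ with $3\leq d\leq 2p-1$), Lemma~\ref{lemma:hight of Q(cofiber)} gives exactly $\h(H^*(Q(F(\RR^d,p)/\Sym_p);\FF_p))\leq p$. For the inductive step I would pick, given odd $d$ with $\min\{p^s:2p^s\geq d\}=p^t$ and $t\geq 2$, the odd integer $n:=2p^{t-1}-1$. Then $\min\{p^s:2p^s\geq n\}=p^{t-1}$, the inequality $n<d$ holds because $d>2p^{t-1}$, and $d<(n+1)p=2p^t$ holds because $d$ is odd and $2p^t$ is even, forcing $d\leq 2p^t-1$. Thus the hypotheses of Lemma~\ref{lemma:hight of Q(cofiber)-2} are met and one obtains $\h(H^*(Q(C^d_n);\FF_p))\leq p$.

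Applying Lemma~\ref{lemma:cofib--fib--Q(X)} to the (mapping-cylinder replacement of the) cofibration $F(\RR^n,p)/\Sym_p\hookrightarrow F(\RR^d,p)/\Sym_p\to C^d_n$ produces a weak fibration of connected infinite loop spaces
\[
Q(F(\RR^n,p)/\Sym_p)\longrightarrow Q(F(\RR^d,p)/\Sym_p)\longrightarrow Q(C^d_n).
\]
From this I would extract a short exact sequence of connected, commutative, cocommutative Hopf algebras
\[
1\to H^*(Q(C^d_n);\FF_p)\to H^*(Q(F(\RR^d,p)/\Sym_p);\FF_p)\to H^*(Q(F(\RR^n,p)/\Sym_p);\FF_p)\to 1,
\]
using that all three terms are cohomology Hopf algebras of infinite loop spaces, so that collapse of the cohomology Serre spectral sequence combined with the Milnor--Moore structure theorem gives the multiplicative extension. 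Lemma~\ref{lemma:tools} together with the induction hypothesis applied to $n$ then produces
\[
\h\bigl(H^*(Q(F(\RR^d,p)/\Sym_p);\FF_p)\bigr)\leq p\cdot p^{t-1}=p^t,
\]
and Corollary~\ref{corollary:tools-2} promotes this to the stated bound on $\h(H^*(F(\RR^d,k)/\Sym_k;\FF_p))$ for every $k\geq 2$. For even $d\geq 2$ I would reduce to $d-1$ odd by comparing $F(\RR^{d-1},k)/\Sym_k$ and $F(\RR^d,k)/\Sym_k$ via the natural $\Sym_k$-equivariant inclusion and an analogous cofibration-to-$Q$-fibration argument, so that the bound for $d-1$ is inherited without loss.

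The main obstacle is the extraction of a short exact sequence of Hopf algebras from the weak fibration of $Q$-spaces: one must establish collapse of the Serre spectral sequence at $E_2$ and compatibility of the multiplicative and comultiplicative structures inherited from the infinite loop space actions, which is exactly what makes Lemma~\ref{lemma:tools} applicable. A secondary but genuine difficulty is the even-dimensional reduction, since Lemmas~\ref{lemma:hight of Q(cofiber)}, \ref{lemma:hight of Q(cofiber)-2} and Corollary~\ref{corollary:tools-2} are stated only for odd $d$, so the passage from $d$ to $d-1$ must be carried out carefully in order to preserve the tight bound.
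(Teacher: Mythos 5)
Your treatment of the odd-dimensional case follows the paper's route almost step for step: Corollary~\ref{corollary:tools-2} to pass to $Q(F(\RR^d,p)/\Sym_p)$, Lemma~\ref{lemma:hight of Q(cofiber)} as the base case, the cofibration $F(\RR^{n},p)/\Sym_p\to F(\RR^d,p)/\Sym_p\to C^d_n$ with $n=2p^{t-1}-1$ feeding Lemma~\ref{lemma:cofib--fib--Q(X)}, the collapse of the Serre spectral sequence giving a Hopf-algebra short exact sequence, Lemma~\ref{lemma:hight of Q(cofiber)-2} bounding the height of $H^*(Q(C^d_n);\FF_p)$ by $p$, and Lemma~\ref{lemma:tools} to multiply heights. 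Your choice of $n$ is exactly the paper's $a_{t-1}=2p^{t-1}-1$, and the inequality checks ($n<d<(n+1)p$) are correct. The only looseness in this half is that you gesture at ``collapse of the cohomology Serre spectral sequence combined with Milnor--Moore'' rather than spelling out, as the paper does, why the transgressive differentials vanish (via surjectivity of $H_*(Q(F(\RR^d,p)/\Sym_p))\to H_*(Q(C^d_n))$ and multiplicativity); but that is the intended argument and the gap is cosmetic.

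The even-dimensional case, however, is where your proposal has a real hole. You propose to compare $F(\RR^{d-1},k)/\Sym_k$ and $F(\RR^d,k)/\Sym_k$ via the natural inclusion and ``an analogous cofibration-to-$Q$-fibration argument,'' but this cannot give the tight bound. First, Lemma~\ref{lemma:tools} multiplies heights, so a cofibration $F(\RR^{d-1},p)/\Sym_p\to F(\RR^d,p)/\Sym_p\to C^d_{d-1}$ would yield at best
\[
\h(H^*(Q(F(\RR^d,p)/\Sym_p);\FF_p))\leq \h(H^*(Q(F(\RR^{d-1},p)/\Sym_p);\FF_p))\cdot p \leq p^{t+1},
\]
which loses a full factor of $p$ over the claimed bound $p^t$. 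Second, and more fundamentally, Corollary~\ref{corollary:tools-2} (the step that transfers a height bound on $Q(F(\RR^d,p)/\Sym_p)$ to $F(\RR^d,k)/\Sym_k$) is only valid for $p$ odd when $d$ is odd, because the map $\Omega^d_0 S^d\to\Omega^\infty_0 S^\infty$ fails to be a mod~$p$ homology monomorphism for even $d$; so even if you had a bound on the $Q$-space, you could not transport it. The paper circumvents both problems at once by abandoning the $Q$-space route for even $d$ and using instead the Serre splitting $\Omega^{d-1}_0 S^{d-1}\times\Omega^d S^{2d-1}\simeq\Omega^d_0 S^d$ after localization away from $2$: this exhibits $H^*(\Omega^d_0 S^d;\FF_p)$ as a \emph{tensor product} (so the height is a maximum, not a product), uses $\gamma_{d,k}^*$ from Corollary~\ref{corollary:tools-1} to reach $F(\RR^d,k)/\Sym_k$, and uses $t_{d-1,p}^*$ from Corollary~\ref{corollary:tools-3} on the odd-dimensional factor $\Omega^{d-1}_0 S^{d-1}$. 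You would need this (or some comparable localization-based splitting) to finish the even case; the cofibration argument you sketch does not close the gap.
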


\noindent
For an odd prime $p$, $d=2p^t$ and $k=p^m$ Theorem~\ref{theorem : Chern classes} implies that the result is tight:
\[
 \h(H^*(F(\RR^{2p^t},p^m)/\Sym_{p^m};\FF_p))=p^t.
\]

\subsection{Proof of Theorem~\ref{theorem:hight_bound_for_prime_2}}

In order to estimate the heights of the elements in $H^*(F(\RR^d,k)/\Sym_k;\FF_2)$ we use
Corollary~\ref{corollary:tools-2} and estimate the heights of elements in
\[
H^*(Q(F(\RR^d,2)/\Sym_2);\FF_2)
\]
instead.
The advantage of considering $H^*(Q(F(\RR^d,2)/\Sym_2);\FF_2)$ lays in the fact that it has a Hopf algebra 
structure and is subject to the Borel structural theorem 
\cite[Prop.\,7.8 and Thm.\,7.11]{milnor - moore}. 
In particular, this means that
\[
H^*(Q(F(\RR^d,2)/\Sym_2);\FF_2)\cong\bigotimes_{i\in I}\FF_2[x_i]/\langle {x_i}^{s_i}\rangle
\]
where each height $s_i$ is either power of $2$ or $\infty$ for some index set $I$. 

Since $F(\RR^d,2)/\Sym_2\simeq \RP^{d-1}$ and operation $Q(\cdot)$ preserves the homotopy type 
we need to estimate height of the following algebra
\[
 H^*(Q(\RP^{d-1});\FF_2).
\]

Now denote by $\RP^{t}_s=\RP^t/\RP^{s-1}$, for $1\leq s\leq t$, the truncated real projective space 
with the top cell in dimension $t$ and the bottom cell in dimension $s$.
Consider the following cofibrations:
\begin{equation}\label{eq:cofibration-01}
 \xymatrix@!
{
\RP^{2^{t-1}}\ar[r]& \RP^{2^{t}}\ar[r]& \RP^{2^{t}}_{2^{t-1}+1},
}
\end{equation}
\begin{equation}\label{eq:cofibration-02}
 \xymatrix@!
{
\RP^{2^{t-1}-1}\ar[r]& \RP^{2^{t}-1}\ar[r]& \RP^{2^{t}-1}_{2^{t-1}},
}
\end{equation}
\begin{equation}\label{eq:cofibration-03}
 \xymatrix@!
{
\RP^{2^{t-1}-1}\ar[r]& \RP^{2^{t}-j}\ar[r]& \RP^{2^{t}-j}_{2^{t-1}}
}
\end{equation}
where $1\leq j\leq 2^{t-1}+1$.
The cofibration~\eqref{eq:cofibration-03} for $j=1$ coincides with the cofibration~\eqref{eq:cofibration-02}.
Now, applying $Q(\cdot)$ yields the following fibrations (up to homotopy):
\begin{equation}\label{eq:fibration-01}
 \xymatrix@!
{
Q(\RP^{2^{t-1}})\ar[r]& Q(\RP^{2^{t}})\ar[r]& Q(\RP^{2^{t}}_{2^{t-1}+1}),
}
\end{equation}
\begin{equation}\label{eq:fibration-02}
 \xymatrix@!
{
Q(\RP^{2^{t-1}-1})\ar[r]& Q(\RP^{2^{t}-1})\ar[r]& Q(\RP^{2^{t}-1}_{2^{t-1}}),
}
\end{equation}
\begin{equation}\label{eq:fibration-03}
 \xymatrix@!
{
Q(\RP^{2^{t-1}-1})\ar[r]& Q(\RP^{2^{t}-j})\ar[r]& Q(\RP^{2^{t}-j}_{2^{t-1}}).
}
\end{equation}
Homology and cohomology Serre spectral sequences, with coefficients in the field $\FF_2$,
associated to the fibrations~\eqref{eq:fibration-01},
\eqref{eq:fibration-02} and~\eqref{eq:fibration-03} collapse at the
$E_2$-term.  Indeed, for the homology spectral sequence:
\begin{compactitem}[$\bullet$]

\item $Q(\RP^{2^{t}}_{2^{t-1}+1})$, $Q(\RP^{2^{t}-1}_{2^{t-1}})$ and $Q(\RP^{2^{t}-j}_{2^{t-1}}) $ are simply connected, 
and consequently $E_2$-term is given by the tensor product of the (co)homology of the fiber and base (co)homology;

\item these fibrations are multiplicative, and therefore the Leibniz rule holds for the differentials in the homology spectral sequence,~\cite{felix-halperin-thomas}. Consequently, the spectral sequences are completely determined 
by the differential arising from the zero row, and finally
\item homology surjections induced by the following quotient maps
         \[
         \RP^{2^{t}}\longrightarrow\RP^{2^{t}}_{2^{t-1}+1},\quad
         \RP^{2^{t}-1} \longrightarrow\RP^{2^{t}-1}_{2^{t-1}},\quad
         \RP^{2^{t}-j}\longrightarrow \RP^{2^{t}-j}_{2^{t-1}},
         \]
         induce the  homology surjections:
         \begin{eqnarray*}
         H_*(Q(\RP^{2^{t}});\FF_2)&\longrightarrow& H_*(Q(\RP^{2^{t}}_{2^{t-1}+1});\FF_2),\\
         H_*(Q(\RP^{2^{t}-1};\FF_2)& \longrightarrow& H_*(Q(\RP^{2^{t}-1}_{2^{t-1}};\FF_2),\\
         H_*(Q(\RP^{2^{t}-j});\FF_2)&\longrightarrow& H_*(Q(\RP^{2^{t}-j}_{2^{t-1}});\FF_2).
         \end{eqnarray*}
         Consequently all differentials arising from the zero row must vanish and the spectral sequence collapses 
         at the $E_2$-term.
\end{compactitem}
Thus the cohomology Serre spectral sequences of the 
fibrations~\eqref{eq:fibration-01},~\eqref{eq:fibration-02} and~\eqref{eq:fibration-03} 
yield the following short exact sequences of Hopf algebras:
\begin{multline}\label{eq:hopf-01}
1\longrightarrow 
H^*(Q(\RP^{2^{t}}_{2^{t-1}+1});\FF_2)\longrightarrow
H^*(Q(\RP^{2^{t}});\FF_2)\\
\longrightarrow H^*(Q(\RP^{2^{t-1}});\FF_2)\longrightarrow 1,
\end{multline}
\begin{multline}\label{eq:hopf-02}
1\longrightarrow 
H^*(Q(\RP^{2^{t}-1}_{2^{t-1}});\FF_2)\longrightarrow 
H^*(Q(\RP^{2^{t}-1});\FF_2)\\
\longrightarrow H^*(Q(\RP^{2^{t-1}-1});\FF_2)
\longrightarrow  1,
\end{multline}
\begin{multline}\label{eq:hopf-03}
1\longrightarrow 
H^*(Q(\RP^{2^{t}-j}_{2^{t-1}});\FF_2)\longrightarrow 
 H^*(Q(\RP^{2^{t}-j});\FF_2)\\
 \longrightarrow H^*(Q(\RP^{2^{t-1}-1});\FF_2)\longrightarrow 1.
\end{multline}
Now we apply  Lemma~\ref{lemma:tools} on sequences~\eqref{eq:hopf-01},~\eqref{eq:hopf-02} and~\eqref{eq:hopf-03} to finish the proof.

\begin{claim}\label{claim:1}
Let $1\leq j\leq 2^{t-1}-1$. Then
\begin{compactenum}[\rm (1)]
\item $\h(H^*(Q(\RP^{2^{t}}_{2^{t-1}+1});\FF_2))=2$,
\item $\h(H^*(Q(\RP^{2^{t}-1}_{2^{t-1}});\FF_2))=2$,
\item $\h(H^*(Q(\RP^{2^{t}-j}_{2^{t-1}});\FF_2))=2$.
\end{compactenum}
\end{claim}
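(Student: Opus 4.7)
The plan is to handle the three subspaces in one stroke, since each one is a truncated real projective space $\RP^b_a$ with cells precisely in dimensions $a, a+1, \ldots, b$. The key elementary observation is that if $2a > b$, then any cup product of two nonzero classes in $\widetilde H^*(\RP^b_a;\FF_2)$ lives in degree $\geq 2a > b$ and hence is forced to vanish, so the reduced cohomology $\widetilde H^*(\RP^b_a;\FF_2)$ has trivial cup-product structure. Dually, $H_*(\RP^b_a;\FF_2)$ is then primitively generated as a coalgebra, and property~(4) of the $Q(\cdot)$-construction listed just after Theorem~\ref{th:Homology_of_Q(X)} yields that $H_*(Q(\RP^b_a);\FF_2)$ is primitively generated as a Hopf algebra, whence by~\cite[Thm.\,4.2]{dyer-lahof} one obtains $\h(H^*(Q(\RP^b_a);\FF_2)) \leq 2$. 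This is the same mechanism that produced the estimates in Corollaries~\ref{cor : F(R^3,k)} and~\ref{cor : cofiber}.

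The only computation is to check the inequality $2a > b$ in each case. In~(1), $(a,b) = (2^{t-1}+1,\, 2^t)$ and $2a = 2^t + 2 > 2^t = b$; in~(2), $(a,b) = (2^{t-1},\, 2^t - 1)$ and $2a = 2^t > 2^t - 1 = b$; in~(3), $(a,b) = (2^{t-1},\, 2^t - j)$ with $1 \leq j \leq 2^{t-1}-1$, and $2a = 2^t > 2^t - j = b$. So the upper bound $\h \leq 2$ holds uniformly in all three cases.

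For the matching lower bound, in each case $\widetilde H^*(\RP^b_a;\FF_2)$ is nonzero, and Theorem~\ref{th:Homology_of_Q(X)} identifies the classes in $H_{\geq 1}(\RP^b_a;\FF_2)$ as part of a set of Hopf algebra generators of $H_*(Q(\RP^b_a);\FF_2)$. Consequently the canonical map $H^*(Q(\RP^b_a);\FF_2) \to H^*(\RP^b_a;\FF_2)$ is surjective onto a nontrivial reduced algebra, so $H^*(Q(\RP^b_a);\FF_2)$ carries a nonzero non-unit class. By the very definition of height, this forces $\h \geq 2$, giving equality.

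There is no substantive obstacle: the statement is engineered so that the truncation range $[a,b]$ is narrow enough ($2a > b$) to kill all cup products, which is the precise hypothesis needed to activate property~(4) of $Q(\cdot)$. The only place a slip could occur is in the arithmetic comparing the bottom cell dimension with the top cell dimension after quotienting by $\RP^{2^{t-1}}$ or $\RP^{2^{t-1}-1}$, which the case analysis above dispatches explicitly.
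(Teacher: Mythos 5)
Your proof is correct and follows essentially the same route as the paper: the paper's own (very terse) proof simply asserts that the $\FF_2$-homology of these truncated projective spaces is primitively generated and invokes property (4) of $Q(\cdot)$, which is exactly your mechanism. You merely make explicit the degree check $2a>b$ forcing all cup products in $\widetilde H^*(\RP^b_a;\FF_2)$ to vanish, and the (immediate) lower bound $\h\geq 2$ from the existence of a nonzero positive-degree class.
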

\begin{proof}
The homologies with coefficients in $\FF_2$ of the spaces $\RP^{2^{t}}_{2^{t-1}+1}$,  $\RP^{2^{t}-1}_{2^{t-1}}$ and $\RP^{2^{t}-j}_{2^{t-1}}$ are primitively generated.
Therefore, the cohomologies 
\[
H^*(Q(\RP^{2^{t}}_{2^{t-1}+1});\FF_2),
\qquad
H^*(Q(\RP^{2^{t}-1}_{2^{t-1}});\FF_2),
\qquad
H^*(Q(\RP^{2^{t}-j}_{2^{t-1}});\FF_2),
\]
have height $2$.
\end{proof}

\begin{claim}\label{claim:2}
$\h(H^*(Q(\RP^{2^{t}});\FF_2)\leq 2^{t+1}$.
\end{claim}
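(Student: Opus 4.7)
\begin{proof}[Proof proposal]
The plan is to proceed by induction on $t$, feeding the bound through the short exact sequence of Hopf algebras~\eqref{eq:hopf-01} by means of the comparison Lemma~\ref{lemma:tools}, using Claim~\ref{claim:1}(1) to control the kernel.

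\textbf{Base case $t=0$.} Here $\RP^{1}\simeq S^{1}$, so the reduced homology $\widetilde{H}_{*}(\RP^{1};\FF_{2})$ is concentrated in degree $1$ and therefore primitively generated as a coalgebra. Property (4) of the functor $Q(\cdot)$ then gives that $H_{*}(Q(\RP^{1});\FF_{2})$ is a primitively generated Hopf algebra and hence
\[
\h(H^{*}(Q(\RP^{1});\FF_{2}))\leq 2 = 2^{0+1}.
\]

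\textbf{Inductive step.} Assume that $\h(H^{*}(Q(\RP^{2^{t-1}});\FF_{2}))\leq 2^{t}$. Applying Lemma~\ref{lemma:tools} to the short exact sequence of connected, (co)commutative and (co)associative Hopf algebras~\eqref{eq:hopf-01},
\[
1\longrightarrow H^{*}(Q(\RP^{2^{t}}_{2^{t-1}+1});\FF_{2})\longrightarrow H^{*}(Q(\RP^{2^{t}});\FF_{2})\longrightarrow H^{*}(Q(\RP^{2^{t-1}});\FF_{2})\longrightarrow 1,
\]
together with Claim~\ref{claim:1}(1), which gives $\h(H^{*}(Q(\RP^{2^{t}}_{2^{t-1}+1});\FF_{2}))=2$, yields
\[
\h(H^{*}(Q(\RP^{2^{t}});\FF_{2}))\leq 2\cdot 2^{t}=2^{t+1}.
\]
This closes the induction and proves the claim.
\end{proof}

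The whole plan rests on two inputs already in hand: Claim~\ref{claim:1}(1), which identifies the ``new'' cells as contributing only height $2$ (via primitivity coming from the truncated projective spaces $\RP^{2^{t}}_{2^{t-1}+1}$), and Lemma~\ref{lemma:tools}, which multiplies heights across a short exact sequence of Hopf algebras. The nontrivial point that has already been handled above this claim is the collapse at $E_{2}$ of the relevant Serre spectral sequence, which is what turns the fibration $Q(\RP^{2^{t-1}})\to Q(\RP^{2^{t}})\to Q(\RP^{2^{t}}_{2^{t-1}+1})$ into the short exact sequence of Hopf algebras~\eqref{eq:hopf-01} with the correct $E_{\infty}$-pages. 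Once that is granted, the claim is essentially immediate, and the main (and only) obstacle in writing the proof is making sure the base case genuinely starts at a homotopy type whose $Q(\cdot)$ has primitive-generated homology; $\RP^{1}\simeq S^{1}$ suffices.
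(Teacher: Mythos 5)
Your proof is correct and is essentially identical to the paper's own argument: induction on $t$ with base case $\RP^1$ primitively generated, and the inductive step obtained by applying Lemma~\ref{lemma:tools} to the short exact sequence~\eqref{eq:hopf-01} together with Claim~\ref{claim:1}(1). (The paper writes the sub- and quotient Hopf algebras in the opposite order in its version of the sequence, but since the height bound of Lemma~\ref{lemma:tools} is symmetric in the two outer terms this makes no difference.)
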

\begin{proof}
The proof is by induction on $t$.
For $t=0$ we that $\RP^1$ is primitively generated and consequently  
\[
\h(H^*(Q(\RP^1);\FF_2))=2.
\]
Let us assume that 
\[
\h(H^*(Q(\RP^{2^{t-1}});\FF_2))\leq 2^t,
\]
and consider the short exact sequence of Hopf algebras~\eqref{eq:hopf-01}:
\[
1\longrightarrow
H^*(Q(\RP^{2^{t-1}});\FF_2)\longrightarrow 
H^*(Q(\RP^{2^{t}});\FF_2)\longrightarrow
H^*(Q(\RP^{2^{t}}_{2^{t-1}+1});\FF_2)\longrightarrow 1.
\]
Then from Claim~\ref{claim:1} and induction hypothesis after applying  Lemma~\ref{lemma:tools}  we get that
\[
\h(H^*(Q(\RP^{2^t});\FF_2))\leq 2^{t+1}.
\]
\end{proof}

\begin{claim}\label{claim:3}
$\h(H^*(Q(\RP^{2^{t}-1});\FF_2)\leq 2^{t}$.
\end{claim}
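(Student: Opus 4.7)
The plan is to prove this by induction on $t$, paralleling the argument for Claim~\ref{claim:2} but using the short exact sequence of Hopf algebras~\eqref{eq:hopf-02} in place of~\eqref{eq:hopf-01}. Specifically, I would exploit that~\eqref{eq:hopf-02} takes the form
\[
1\longrightarrow H^*(Q(\RP^{2^{t}-1}_{2^{t-1}});\FF_2)\longrightarrow H^*(Q(\RP^{2^{t}-1});\FF_2)\longrightarrow H^*(Q(\RP^{2^{t-1}-1});\FF_2)\longrightarrow 1,
\]
so that the outer terms can be controlled by Claim~\ref{claim:1}(2) and the inductive hypothesis, respectively, and then apply Lemma~\ref{lemma:tools}.

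For the base case $t=1$, note that $\RP^{2^{1}-1}=\RP^{1}\simeq S^{1}$; its $\FF_2$-homology is primitively generated (as for any suspension), so property~(4) of $Q(\cdot)$ gives
\[
\h(H^*(Q(\RP^{1});\FF_2))\leq 2 = 2^{1}.
\]
For the inductive step, assume the claim for $t-1$, i.e.\ $\h(H^*(Q(\RP^{2^{t-1}-1});\FF_2))\leq 2^{t-1}$. By Claim~\ref{claim:1}(2) we have $\h(H^*(Q(\RP^{2^{t}-1}_{2^{t-1}});\FF_2))=2$. Feeding these two estimates into Lemma~\ref{lemma:tools} applied to~\eqref{eq:hopf-02} yields
\[
\h(H^*(Q(\RP^{2^{t}-1});\FF_2))\leq 2\cdot 2^{t-1}=2^{t},
\]
completing the induction.

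There is no genuine obstacle here beyond checking that~\eqref{eq:hopf-02} is indeed a short exact sequence of connected, (co)commutative, (co)associative Hopf algebras over $\FF_2$ (so that Lemma~\ref{lemma:tools} applies); this has already been established in the preceding discussion via the collapse at $E_2$ of the Serre spectral sequence of the fibration~\eqref{eq:fibration-02}. Thus the proof is a direct analogue of Claim~\ref{claim:2}, merely replacing $\RP^{2^t}$ by $\RP^{2^t-1}$ throughout.
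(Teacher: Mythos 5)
Your proof is correct and is essentially identical to the paper's: induction on $t$ with base case $\RP^1$, using the short exact sequence of Hopf algebras \eqref{eq:hopf-02}, Claim~\ref{claim:1}(2) for the kernel, and Lemma~\ref{lemma:tools} to combine. (Incidentally, you wrote \eqref{eq:hopf-02} with the fiber and base factors in the order in which they are stated earlier; the paper's own proof of Claim~\ref{claim:3} swaps the two outer terms, but since Lemma~\ref{lemma:tools} is symmetric in $A$ and $C$ this makes no difference.)
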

\begin{proof}
The proof is again by induction on $t$.
For $t=1$, as before, we have that $\RP^1$ is primitively generated and consequently $\h(H^*(Q(\RP^1);\FF_2))=2$.
Let us assume that 
\[
\h(H^*(Q(\RP^{2^{t-1}-1}))\leq 2^{t-1}
\]
and consider the exact sequence of Hopf algebras~\eqref{eq:hopf-02}:
\[
1\longrightarrow 
H^*(Q(\RP^{2^{t-1}-1});\FF_2)\longrightarrow 
H^*(Q(\RP^{2^{t}-1});\FF_2)\longrightarrow  H^*(Q(\RP^{2^{t}-1}_{2^{t-1}});\FF_2)\longrightarrow 1.
\]
Now the induction hypothesis and Claim~\ref{claim:1} combined with Lemma~\ref{lemma:tools} imply that
\[
\h(H^*(Q(\RP^{2^{t}-1}))\leq 2^{t-1}\cdot 2.
\]
\end{proof}

\begin{claim}\label{claim:4}
Let $1\leq j\leq 2^{t-1}-1$, then $\h(H^*(Q(\RP^{2^{t}-j});\FF_2)\leq 2^{t}$.
\end{claim}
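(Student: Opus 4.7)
The plan is to apply the height comparison Lemma~\ref{lemma:tools} directly to the short exact sequence of Hopf algebras~\eqref{eq:hopf-03}:
\[
1 \longrightarrow H^*(Q(\RP^{2^{t}-j}_{2^{t-1}});\FF_2) \longrightarrow H^*(Q(\RP^{2^{t}-j});\FF_2) \longrightarrow H^*(Q(\RP^{2^{t-1}-1});\FF_2) \longrightarrow 1.
\]
This mirrors exactly the strategy used in the proofs of Claims~\ref{claim:2} and~\ref{claim:3}; the only difference is that the cofibration~\eqref{eq:cofibration-03} is now parametrized by $j$, producing a family of truncated projective spaces $\RP^{2^{t}-j}_{2^{t-1}}$ with $1\leq j\leq 2^{t-1}-1$ as the quotient piece.

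For the two outer terms I would invoke results already established. The kernel $H^*(Q(\RP^{2^{t}-j}_{2^{t-1}});\FF_2)$ has height $2$ by Claim~\ref{claim:1}(3), which applies precisely in the range $1\leq j\leq 2^{t-1}-1$ that we consider. The quotient $H^*(Q(\RP^{2^{t-1}-1});\FF_2)$ has height at most $2^{t-1}$ by Claim~\ref{claim:3}. Feeding these two bounds into Lemma~\ref{lemma:tools} yields
\[
\h(H^*(Q(\RP^{2^{t}-j});\FF_2)) \leq 2\cdot 2^{t-1} = 2^{t},
\]
which is the desired estimate. No new induction is needed here, since the inductive work has already been carried out in Claim~\ref{claim:3}; the present claim is a direct application of the comparison lemma, and the only thing to check carefully is that the range of $j$ in our hypothesis is compatible with the range in Claim~\ref{claim:1}(3), which it is by construction.
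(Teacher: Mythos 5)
Your proposal is correct and follows exactly the paper's own argument: apply Lemma~\ref{lemma:tools} to the short exact sequence~\eqref{eq:hopf-03}, using Claim~\ref{claim:1}(3) for the truncated piece and Claim~\ref{claim:3} for $H^*(Q(\RP^{2^{t-1}-1});\FF_2)$, yielding $2\cdot 2^{t-1}=2^t$. The only cosmetic difference is the order in which the kernel and quotient terms are listed, which is irrelevant since the bound in Lemma~\ref{lemma:tools} is a product.
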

\begin{proof}
The exact sequence of Hopf algebra~\eqref{eq:hopf-03}:
\[
1\longrightarrow 
H^*(Q(\RP^{2^{t-1}-1});\FF_2)\longrightarrow 
H^*(Q(\RP^{2^{t}-j});\FF_2)\longrightarrow
 H^*(Q(\RP^{2^{t}-j}_{2^{t-1}});\FF_2)\longrightarrow 1
\]
combined with Claims~\ref{claim:1} and~\ref{claim:3}, and Lemma~\ref{lemma:tools} implies that
\[
\h(H^*(Q(\RP^{2^{t}-j});\FF_2)
\leq
2^{t-1}\cdot 2.
\] 
\end{proof}

\noindent
Combining all the claims together we get that
\[
\h(H^*(F(\RR^d,k)/\Sym_k;\FF_2))\leq\h(H^*(Q(\RP^{d-1});\FF_2)\leq \min\{ 2^t: 2^t\geq d \}.
\]
This concludes the proof of Theorem~\ref{theorem:hight_bound_for_prime_2}.

\subsection{Proof of Theorem~\ref{theorem:hight_bound_for_odd}}
The proof is separated into two parts depending on the parity of $d$.

\subsubsection{Proof of Theorem~\ref{theorem:hight_bound_for_odd} for $d$ odd}
As in the proof of Theorem~\ref{theorem:hight_bound_for_prime_2}, 
instead of estimating the heights of the elements in $H^*(F(\RR^d,k)/\Sym_k;\FF_p)$ 
we use Corollary~\ref{corollary:tools-2} that yields inequality
\[
\h(H^*(F(\RR^d,k)/\Sym_k;\FF_p))\leq \h(H^*(Q(F(\RR^d,p)/\Sym_p);\FF_p))
\]
and estimate the heights of elements in $H^*(Q(F(\RR^d,p)/\Sym_p);\FF_p)$ instead. 
The Borel structural theorem~\cite[Prop.\,7.8 and Thm.\,7.11]{milnor - moore} implies that 
\[
H^*(Q(F(\RR^d,p)/\Sym_p);\FF_p)\cong\bigotimes_{i\in I}\FF_p[x_i]/\langle {x_i}^{s_i}\rangle\otimes\bigotimes_{j\in J}\Lambda(y_j)
\]
where each height $s_i$ is either power of $p$ or $\infty$ and $\deg(y_j)$ is odd for every $j\in J$. 
Estimation of the height of  algebra $H^*(Q(F(\RR^d,p)/\Sym_p);\FF_p)$ will be done in several steps.

First we recall the following special cases, Lemma~\ref{lemma:hight of Q(cofiber)}.

\begin{claim}\label{claim:mod-p-01}
For  $3\leq d\leq 2p-1$ an odd integer:
\[
\h(H^*(Q(F(\RR^{d},p)/\Sym_p);\FF_p))\leq p.
\]
\end{claim}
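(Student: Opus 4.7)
The claim simply restates Lemma~\ref{lemma:hight of Q(cofiber)}, so the plan is to appeal to that lemma directly. For the purpose of this proposal, let me sketch the strategy I would use to prove the bound if I were proving the lemma from scratch, since this is the essential content.

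The plan is to reduce the estimate $\h(H^*(Q(X);\FF_p)) \leq p$, for $X := F(\RR^d,p)/\Sym_p$, to the statement that the Hopf algebra $H_*(Q(X);\FF_p)$ is primitively generated. Indeed, by property (4) of $Q(\cdot)$ recorded in Section~\ref{sec:The space $Q(X)$} (which cites \cite[Thm.~4.2]{dyer-lahof}), primitively generated connected graded commutative cocommutative Hopf algebras over $\FF_p$ have height at most $p$. So the whole game is to produce primitive algebra generators of $H_*(Q(X);\FF_p)$.

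The next step is to combine Theorem~\ref{th:Homology_of_Q(X)}, which says that $H_*(Q(X);\FF_p)$ is free on the appropriate admissible Araki--Kudo--Dyer--Lashof operations applied to a basis of $H_{\geq 1}(X;\FF_p)$, with Lemma~\ref{lemma:Q(primitve)}, which says that these operations preserve primitivity. Hence it suffices to exhibit a new basis of $H_{\geq 1}(X;\FF_p)$, \emph{viewed inside} $H_*(Q(X);\FF_p)$, consisting of primitives. For $3 \leq d \leq 2p-1$ odd, the standard generators $x_\ell$ (of even degree) and $y_\ell=\partial_*^p x_\ell$ (of odd degree) are indexed by $1 \leq \ell \leq (d-1)/2 \leq p-1$, and their diagonals are given by \eqref{eq:Delta(x)}--\eqref{eq;Delta(y)}. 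The crucial point is that every such $\ell$ is invertible in $\FF_p$.

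Because the coproduct of $x_\ell$ has exactly the form of the coproduct on the complete symmetric functions $h_\ell$, I would define \emph{Newton polynomials} $v_\ell := \ell x_\ell - \sum_{i=1}^{\ell-1} x_i v_{\ell-i}$ (using the Pontryagin product in $H_*(Q(X);\FF_p)$), check by induction on $\ell$ that each $v_\ell$ is primitive, and then use naturality of the Bockstein $\partial_*^p$ with respect to $\Delta_*$ to conclude that $\partial_*^p v_\ell$ is also primitive. Since $\ell$ is a unit in $\FF_p$, the change of basis from $\{x_\ell, y_\ell\}$ to $\{v_\ell, \partial_*^p v_\ell\}$ is invertible, so the two sets generate the same subalgebra, and hence $H_*(Q(X);\FF_p)$ has primitive algebra generators.

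The main obstacle is the inductive verification of primitivity of the $v_\ell$: it requires a careful manipulation of the coproduct formula, distinguishing cross terms from the leading $\ell(x_\ell \otimes 1 + 1 \otimes x_\ell)$, and it is exactly here that the hypothesis $d \leq 2p-1$ (equivalently $(d-1)/2 \leq p-1$) enters, guaranteeing invertibility of $\ell$ modulo $p$ so that the Newton recursion can be solved. Once this primitivity is established, the height bound $\leq p$ follows immediately from \cite[Thm.~4.2]{dyer-lahof}.
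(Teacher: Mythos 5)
Your proposal is correct and follows essentially the same route as the paper: the paper proves Claim~\ref{claim:mod-p-01} simply by citing Lemma~\ref{lemma:hight of Q(cofiber)}, and your sketch of that lemma's proof (Newton polynomials $v_\ell = \ell x_\ell - \sum x_i v_{\ell-i}$, primitivity by induction, Bockstein naturality for the odd-degree generators, invertibility of $\ell$ modulo $p$ from $d \leq 2p-1$, then property (4)/\cite[Thm.~4.2]{dyer-lahof}) matches the paper's argument step for step.
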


Now we use the previous claim as an induction step in the next claim.

\begin{claim}\label{claim:mod-p-02}
For $j\geq0$ an integer and $2p^j+1\leq d\leq 2p^{j+1}-1$ be an odd integer:
\[
\h(H^*(Q(F(\RR^{d},p)/\Sym_p);\FF_p))\leq p^{j+1}.
\]
\end{claim}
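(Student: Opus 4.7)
The plan is to proceed by induction on $j$. The base case $j=0$ is precisely Claim~\ref{claim:mod-p-01}, as the range $2p^0+1 \leq d \leq 2p-1$ and the bound $p^{0+1}=p$ both match. For the inductive step, fix $j \geq 1$ and an odd integer $d$ with $2p^j + 1 \leq d \leq 2p^{j+1} - 1$, and set $n := 2p^j - 1$. This choice is engineered so that two constraints hold simultaneously: first, $n$ is odd with $n \geq 3$, $n < d$, and $(n+1)p = 2p^{j+1} > d$, which are exactly the hypotheses of Lemma~\ref{lemma:hight of Q(cofiber)-2} for $C^d_n$; second, for $j \geq 1$ and $p \geq 3$ one has $2p^{j-1} + 1 \leq n \leq 2p^j - 1$ (since $p^{j-1}(p-1) \geq 1$), so the induction hypothesis for $j-1$ applies to $F(\RR^n,p)/\Sym_p$ (specializing to Claim~\ref{claim:mod-p-01} when $j=1$) and yields $\h(H^*(Q(F(\RR^n,p)/\Sym_p);\FF_p)) \leq p^j$.

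Following the strategy of the $p=2$ proof of Theorem~\ref{theorem:hight_bound_for_prime_2}, I would replace the inclusion $F(\RR^n,p)/\Sym_p \hookrightarrow F(\RR^d,p)/\Sym_p$ by a cofibration with cofiber $C^d_n$ and apply $Q(\cdot)$. By Lemma~\ref{lemma:cofib--fib--Q(X)} this yields, up to weak equivalence, a fibration sequence
\[
Q(F(\RR^n,p)/\Sym_p) \longrightarrow Q(F(\RR^d,p)/\Sym_p) \longrightarrow Q(C^d_n).
\]
The cohomology Serre spectral sequence with $\FF_p$ coefficients has the product form $E_2^{r,s} = H^r(Q(C^d_n);\FF_p) \otimes H^s(Q(F(\RR^n,p)/\Sym_p);\FF_p)$, because $C^d_n$ is highly connected by \eqref{eq:homology_of_cofiber}, forcing $Q(C^d_n)$ to be simply connected. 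I would argue collapse at $E_2$ exactly as in the $p=2$ case: the fibration is multiplicative, so by the Leibniz rule the spectral sequence is determined by its differentials on the bottom row, and these vanish because the quotient $F(\RR^d,p)/\Sym_p \to C^d_n$ induces a surjection in $\FF_p$-homology. The latter follows from the long exact sequence of the pair together with the fact that the inclusion $F(\RR^n,p)/\Sym_p \hookrightarrow F(\RR^d,p)/\Sym_p$ induces a surjection in $\FF_p$-cohomology (by the naturality part of Theorem~\ref{theorem:cohomology_of_F(odd,p)}), hence an injection in $\FF_p$-homology; property~(2) of $Q(\cdot)$ from Section~\ref{sec:The space $Q(X)$} then transfers this surjectivity to the map $Q(F(\RR^d,p)/\Sym_p) \to Q(C^d_n)$.

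Collapse produces a short exact sequence of connected, (co)commutative, (co)associative Hopf algebras over $\FF_p$,
\[
1 \to H^*(Q(C^d_n);\FF_p) \to H^*(Q(F(\RR^d,p)/\Sym_p);\FF_p) \to H^*(Q(F(\RR^n,p)/\Sym_p);\FF_p) \to 1,
\]
to which Lemma~\ref{lemma:tools} applies, giving
\[
\h\bigl(H^*(Q(F(\RR^d,p)/\Sym_p);\FF_p)\bigr) \leq \h\bigl(H^*(Q(C^d_n);\FF_p)\bigr) \cdot \h\bigl(H^*(Q(F(\RR^n,p)/\Sym_p);\FF_p)\bigr) \leq p \cdot p^j = p^{j+1},
\]
using Lemma~\ref{lemma:hight of Q(cofiber)-2} for the leftmost height bound and the induction hypothesis for the rightmost. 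The main obstacle I anticipate is the collapse of the Serre spectral sequence, specifically verifying that the $Q$-fibration is multiplicative in the precise sense needed to invoke the Leibniz rule on the $E_r$-differentials; this parallels the argument implicit in the $p=2$ case and should follow from the $E_\infty$-structure on $Q(\cdot)$ together with the standard reference on multiplicative spectral sequences. The remaining steps are a mechanical assembly of ingredients from Sections~\ref{sec:The space $Q(X)$} and~\ref{sec:Height comparison lemma for Hopf algebras}.
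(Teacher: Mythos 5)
Your proposal is correct and follows essentially the same route as the paper: the paper also sets $n=2p^j-1$ (its $a_j$), forms the cofibration with cofiber $C^d_n$, applies $Q(\cdot)$ and Lemma~\ref{lemma:cofib--fib--Q(X)}, argues collapse of the Serre spectral sequence by the same three points, and combines Lemma~\ref{lemma:hight of Q(cofiber)-2} with Lemma~\ref{lemma:tools} in an induction on $j$. The only difference is cosmetic: the paper parametrizes the odd integers in the range as $b_{j,k}=2p^{j+1}+1-k$ rather than taking an arbitrary odd $d$, and your verification that the quotient map is surjective in $\FF_p$-homology is slightly more explicit than the paper's.
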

\begin{proof}
For simplicity when $j\geq0$ we denote by:
\[
a_j:=2p^j-1
\quad<\quad
b_{j,k}:=2p^{j+1}+1-k,
\]
where $2\leq k \leq 2p^j(p-1)$ is an even integer. 
Considering the cofibration
\begin{equation}\label{eq:p-cofibration-01}
 \xymatrix@!
{
F(\RR^{a_j},p)/\Sym_p\ar[r]& F(\RR^{b_{j,k}},p)/\Sym_p\ar[r]& C^{b_{j,k}}_{a_j}.
}
\end{equation}
Now we apply $Q(\cdot)$ and get, from Lemma~\ref{lemma:cofib--fib--Q(X)}, the
following fibration:
\begin{equation}\label{eq:p-fibration-01}
 \xymatrix@!
{
Q(F(\RR^{a_j},p)/\Sym_p)\ar[r]& Q(F(\RR^{b_{j,k}},p)/\Sym_p)\ar[r]& Q(C^{b_{j,k}}_{a_j}).
}
\end{equation}
Both the homology and the cohomology Serre spectral sequence of the fibration
\eqref{eq:p-fibration-01} collapse at the second term, because:
\begin{compactitem}[ $\bullet$]
\item $Q(C^{b_{j,k}}_{a_j})$ is simply connected and so the second term is a given by the
  tensor product of the (co)homology of the fiber and base (co)homology;

\item the fibration~\eqref{eq:p-fibration-01} is multiplicative, and therefore the
  Leibniz rule for the differentials holds,~\cite{felix-halperin-thomas}.  Therefore, the
  spectral sequence is completely determined by the differential arising from the zero
  row, and finally

\item the quotient map
$
F(\RR^{b_{j,k}},p)/\Sym_p\longrightarrow C^{b_{j,k}}_{a_j}
$
induces a surjection in homology
\[
H_*(F(\RR^{b_{j,k}},p)/\Sym_p;\FF_p)\longrightarrow H_*(C^{b_{j,k}}_{a_j};\FF_p)
\]
that further on induces the following surjection in homology
\[
H_*(Q(F(\RR^{2p^{j+1}+1},p)/\Sym_p);\FF_p)\longrightarrow H_*(Q(C^{2p^{j+1}+1}_{2p^j+1});\FF_p).
\]
Thus all differentials starting in the zero row have to vanish.
\end{compactitem}
Consequently, the cohomology Serre spectral sequence of the
fibration~\eqref{eq:p-fibration-01} gives the following exact sequence of Hopf
algebras:
\begin{multline}\label{eq:Hopf-fibration-01}
1\longrightarrow
H^*(Q(C^{b_{j,k}}_{a_j});\FF_p)\longrightarrow H^*(Q(F(\RR^{b_{j,k}},p)/\Sym_p);\FF_p)\\
\longrightarrow H^*(Q(F(\RR^{a_j},p)/\Sym_p);\FF_p)
\longrightarrow 1.
\end{multline}
Since 
\[
p(a_j+1)-b_{j,k}=\cancel{2p^{j+1}}-\cancel{2p^{j+1}}-1+k=k-1>0,
\] 
then Lemma~\ref{lemma:hight of Q(cofiber)-2}, implies that for every $j\geq0$:
\begin{equation}\label{eq:height-01}
\h(H^*(Q(C^{b_{j,k}}_{a_j});\FF_p))\leq p.
\end{equation}
According to Claim~\ref{claim:mod-p-01} we have that:
\begin{equation}\label{eq:height-01-1}
\h(H^*(Q(F(\RR^{2p-1},p)/\Sym_p);\FF_p))\leq p.
\end{equation}
Consequently, from inequalities~\eqref{eq:height-01} with $j=1$ and~\eqref{eq:height-01-1}, 
exact sequence of Hopf algebras~\eqref{eq:Hopf-fibration-01} with $j=1$, and Lemma~\ref{lemma:tools} we get that
\[
\h(H^*(Q(F(\RR^{b_{1,k}},p)/\Sym_p);\FF_p))\leq p^{2}.
\]
for every $2\leq k\leq 2p(p-1)$.

\noindent
Since $a_{j+1}=b_{j,2}$ we can iterate the process.
Indeed, we have just proved that 
\[\h(H^*(Q(F(\RR^{a_2},p)/\Sym_p);\FF_p))\leq p^{2}.\] 
Then from inequality~\eqref{eq:height-01} with $j=2$,  exact sequence of 
Hopf algebras~\eqref{eq:Hopf-fibration-01} with $j=2$, and Lemma~\ref{lemma:tools} we conclude that:
\[
\h(H^*(Q(F(\RR^{b_{2,k}},p)/\Sym_p);\FF_p))\leq p^{3},
\]
for all relevant $k$.

\noindent
The proof is concluded by induction on $j$ by assuming
\[
\h(H^*(Q(F(\RR^{a_j},p)/\Sym_p);\FF_p))\leq p^j
\]
and then evoking inequality~\eqref{eq:height-01}, exact sequence of 
Hopf algebras~\eqref{eq:Hopf-fibration-01}, and Lemma~\ref{lemma:tools} to get:
\[
\h(H^*(Q(F(\RR^{b_{j,k}},p)/\Sym_p);\FF_p))\leq p^{j+1},
\]
for relevant $k$.
\end{proof}

The general case is obtained along the lines of the previous claim as follows.
Combining the results of Claims~\ref{claim:mod-p-01} and~\ref{claim:mod-p-02} with the observation
\[
\h(H^*(F(\RR^d,k)/\Sym_k;\FF_p))\leq \h(H^*(Q(F(\RR^d,p)/\Sym_p);\FF_p))
\]
we get that for $d\leq 2p^{j}-1$ where $j\geq 1$ holds
\[
 \h(H^*(F(\RR^d,k)/\Sym_k;\FF_p))\leq p^{j}=\min\{ p^t: 2p^t\geq d \}.
\]
This concludes the proof of Theorem~\ref{theorem:hight_bound_for_odd}.

\subsubsection{Proof of Theorem~\ref{theorem:hight_bound_for_odd} for $d$ even}

Let $d\geq 2$ be an even integer. 
The well known result of Serre asserts that the homotopy group $\pi_{2d-1}(S^{d})$ is a direct sum of the infinite cyclic group and a finite group.
Let $f\colon S^{2d-1}\rightarrow S^d$ represents a generator of a infinite cyclic summand. 
Further on, let $g\colon S^{d-1}\rightarrow\Omega S^d$ be the map corresponding to the identity map $S^d\rightarrow S^d$. 
The Serre decomposition \cite{serre}  is defined by the following map 
\[
\xymatrix{
S^{d-1}\times \Omega S^{2d-1}\ar[r]^-{g\times\Omega(f)}&
\Omega S^d\times\Omega S^d\ar[r]^-{\times}&
\Omega S^d
}
\]
that is homotopy equivalence when all the spaces are localized away from $2$, \cite[Prop.\,4.4.4]{neisendorfer}.

\smallskip
\noindent
The localization commutes with taking loops.
Thus, after looping $d-1$ times we get a homotopy equivalence
\[
\Omega^{d-1}S^{d-1}\times \Omega^d S^{2d-1}\xrightarrow{\Omega^{d-1}(g)\times\Omega^d(f)}
\Omega^d S^d\times\Omega^d S^d\xrightarrow{\quad \times \quad }
\Omega^d S^d
\]
when all the spaces are localized away from $2$.
Since $\Omega^d(f)$ factors as follows:
\[
\xymatrix{
\Omega^dS^{2d-1}\ar[rr]^-{\Omega^d(f)}\ar[dr]&   & \Omega^dS^d\\
&\Omega^d_0S^d\ar@{^{(}->}[ur],&
}
\]
we have a homotopy equivalence of corresponding connected components
\[
\xymatrix{
\varphi\colon\Omega^{d-1}_0S^{d-1}\times \Omega^d S^{2d-1}\ar[r]&
\Omega^d_0 S^d,
}
\]
with all the spaces localized away from $2$.

\smallskip
\noindent
Consider the following diagram
\[
\xymatrix@!C=16em{
F(\RR^d,k)/\Sym_k\ar[r]^-{\gamma_{d,k}}&
\Omega^d_0 S^d\\
Q(F(\RR^{d-1},p)/\Sym_p)\times \Omega^dS^{2d-1}&                                                 
\ar[l]_-{t_{d,p}\times\id}\Omega^{d-1}_0S^{d-1}\times \Omega^d S^{2d-1}\ar[u]^{\varphi}
}
\]
where in cohomology with coefficients in $\FF_p$: 
\begin{compactitem}
\item $\gamma_{d,k}$ induces an epimorphism, by Corollary~\ref{corollary:tools-1},
\item $\varphi$ induces an isomorphism, as we just established, and
\item $t_{d,p}\times\id$ induces also an epimorphism, by Corollary~\ref{corollary:tools-3}.
\end{compactitem}
Thus, there is a sequence of estimates:
\begin{eqnarray*}
\h(H^*(F(\RR^d,k)/\Sym_k;\FF_p))&\leq& \h(H^*(\Omega^d_0S^d;\FF_p))\\
&=&\h(H^*(\Omega^{d-1}_0S^{d-1}\times \Omega^d S^{2d-1};\FF_p))\\
&\leq&\max\{\h(H^*(\Omega^{d-1}_0S^{d-1};\FF_p)),\\
& & \,\,\qquad\h(H^*(\Omega^d S^{2d-1};\FF_p))\}\\
&\leq&\max\{\h(H^*(Q(F(\RR^{d-1},p)/\Sym_p);\FF_p)),\\
& & \,\,\qquad\h(H^*(\Omega^d S^{2d-1};\FF_p))\}.
\end{eqnarray*}
Finally, since $H_*(\Omega^dS^{2d-1};\FF_p)$ is primitively 
generated,~\cite[Proof of Thm.\,5.2]{dyer-lahof}, the height of its cohomology is bounded by $p$, i.e.,
$
\h(H^*(\Omega^d S^{2d-1};\FF_p))\leq p
$.
Consequently,
\[
\h(H^*(F(\RR^d,k)/\Sym_k);\FF_p)\leq \h(H^*(Q(F(\RR^{d-1},p)/\Sym_p);\FF_p)),
\]
and this concludes the proof of Theorem~\ref{theorem:hight_bound_for_odd} in case $d$ is even.

\section{Chern classes of the regular representation bundle}

Let $\FF$ denote the field of either the real numbers $\RR$ or the complex numbers~$\CC$.
Consider  $\FF^k$ as an $\Sym_k$-representation with the action given by the coordinate permutation.  
Then the vector subspace $W_k^{\FF}=\{(x_1,\ldots,
x_k)\in\FF^k:\sum x_i=0\}$ of $\FF^k$ is an $\Sym_k$-subrepresentation.  
For a topological space $X$ we introduce the following vector bundles over its unordered configuration space
\[
 \xymatrix{
 \xi_{X,k}^{\FF}      & \FF^k\ar[r]  &  F(X,k)\times_{\Sym_k}\FF^k\ar[r]& F(X,k)/\Sym_k,}
 \]
 \[
\xymatrix{
\zeta_{X,k}^{\FF} & W_k^{\FF} \ar[r]  & F(X,k)\times_{\Sym_k}W_k^{\FF}\ar[r]&    F(X,k)/\Sym_k,}
\]
\[
\xymatrix{
\tau_{X,k}^{\FF}  & \FF\ar[r]    & F(X,k)/\Sym_k\times\FF \ar[r]& F(X,k)/\Sym_k,}
\]
where the last bundle is a trivial $\FF$ line bundle.
The bundle $\xi_{X,k}^{\FF}$ is called {\em the regular representation bundle} over the field $\FF$.
There is an obvious decomposition:
\begin{equation}
\label{eq:decomposition}
\xi_{X,k}^{\FF}\cong\zeta_{X,k}^{\FF}\oplus \tau_{X,k}^{\FF}.
\end{equation}

\smallskip
Let $\alpha$ be an $n$-dimensional complex vector bundle over $X$. 
We denote by 
\[
c_i(\alpha)\in H^{2i}(X;\FF_p)
\]
the $i$th Chern classes of $\alpha$ modulo prime $p$, $1\leq i\leq n$.
The total Chern class is denoted by:
\[
c(\alpha)=1+c_1(\alpha)+\cdots+c_n(\alpha)\in H^*(X;\FF_p).
\]
Define the \emph{inverse Chern class} modulo $p$ to be
\[
\overline{c}(\alpha)=c(\alpha)^{-1}=\sum_{n\geq 0}(-1)^n(c_1(\alpha)+\cdots+c_n(\alpha))^n.
\]
If $\beta$ is a complex vector bundle over $X$ such that the Whitney sum 
$\alpha\oplus\beta$ is isomorphic to a trivial bundle, then
\[
c(\beta)=\overline{c}(\alpha).
\]

\smallskip
In this section, combining the methods from papers \cite{blagojevic-ziegler-convex} and \cite{blagojevic-luck-ziegler-1} we prove the following theorems.
\begin{theorem}
\label{theorem : Chern classes}
Let $d\geq 1$ be an integer and $k$ be a power of an odd prime $p$.
Then the mod $p$ Chern class
\begin{equation} 
c_{(d-1)(k-1)}((\xi^{\CC}_{\CC^d,k})^{\oplus d-1})=c_{k-1}(\xi^{\CC}_{\CC^d,k})^{d-1}\neq 0\
\end{equation}
does not vanish in $H^{2(d-1)(k-1)}(F(\CC^d,k)/\Sym_k;\FF_p)$.
\end{theorem}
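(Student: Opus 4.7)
The plan is to detect the Chern class $c_{k-1}(\zeta^{\CC}_{\CC^d,k})^{d-1}$ by restriction to an elementary abelian $p$-Sylow subgroup and a reduction to a Dickson invariant computation on $BE$. The case $d=1$ is trivial, so assume $d\geq 2$. Because of the decomposition $\xi^{\CC}_{\CC^d,k}\cong\zeta^{\CC}_{\CC^d,k}\oplus\tau^{\CC}_{\CC^d,k}$ with $\tau^{\CC}_{\CC^d,k}$ a trivial line bundle, one has $c_{k-1}(\xi^{\CC}_{\CC^d,k})=c_{k-1}(\zeta^{\CC}_{\CC^d,k})$, so it suffices to prove $c_{k-1}(\zeta^{\CC}_{\CC^d,k})^{d-1}\neq 0$.

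First, embed $E=(\ZZ/p)^{t}\hookrightarrow\Sym_{k}$ via the regular action on $k=p^{t}$ letters, and consider the covering $\pi\colon F(\CC^{d},k)/E\to F(\CC^{d},k)/\Sym_{k}$. It suffices to show that $\pi^{*}c_{k-1}(\zeta^{\CC}_{\CC^d,k})^{d-1}$ is non-zero. Under the regular action, the $E$-representation $W_{k}^{\CC}$ decomposes as the sum of all non-trivial characters, $W_{k}^{\CC}\cong\bigoplus_{\chi\neq 1}\CC_{\chi}$, so the pullback bundle splits into a sum of complex line bundles $\pi^{*}\zeta^{\CC}_{\CC^d,k}\cong\bigoplus_{\chi\neq 1}L_{\chi}$ and consequently
\[
\pi^{*}c_{k-1}(\zeta^{\CC}_{\CC^d,k})=\prod_{\chi\neq 1}c_{1}(L_{\chi}).
\]

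Next, since $E$ acts freely on $F(\CC^{d},k)$, the quotient is a principal $E$-bundle with classifying map $\phi\colon F(\CC^{d},k)/E\to BE$, and $L_{\chi}=\phi^{*}\widetilde{L}_{\chi}$ for the universal line bundle $\widetilde{L}_{\chi}$. Inside
\[
H^{*}(BE;\FF_{p})=\FF_{p}[u_{1},\ldots,u_{t}]\otimes\Lambda(v_{1},\ldots,v_{t}),\quad \deg u_{i}=2,
\]
the Chern classes $c_{1}(\widetilde{L}_{\chi})=a_{1}u_{1}+\cdots+a_{t}u_{t}$ for $\chi=\chi_{1}^{a_{1}}\cdots\chi_{t}^{a_{t}}$ yield
\[
D:=\prod_{\chi\neq 1}c_{1}(\widetilde{L}_{\chi})=\prod_{0\neq(a_{1},\ldots,a_{t})\in\FF_{p}^{t}}(a_{1}u_{1}+\cdots+a_{t}u_{t}),
\]
which equals, up to a non-zero scalar, the top Dickson invariant in $\FF_{p}[u_{1},\ldots,u_{t}]$. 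As $\FF_{p}[u_{1},\ldots,u_{t}]$ is an integral domain, $D^{d-1}$ is a non-zero class in $H^{2(d-1)(k-1)}(BE;\FF_{p})$.

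The principal obstacle is then to verify that $\phi^{*}(D^{d-1})$ is still non-zero in $H^{*}(F(\CC^{d},k)/E;\FF_{p})$. The plan is to use the Serre spectral sequence of the fibration $F(\CC^{d},k)\to F(\CC^{d},k)/E\to BE$. The target class $D^{d-1}$ sits in $E_{2}^{2(d-1)(k-1),0}$; outgoing differentials $d_{r}\colon E_{r}^{2(d-1)(k-1),0}\to E_{r}^{2(d-1)(k-1)+r,\,1-r}$ are zero for $r\geq 2$ since the target row is negative, so the only threats are incoming transgressions $d_{r}\colon E_{r}^{2(d-1)(k-1)-r,\,r-1}\to E_{r}^{2(d-1)(k-1),0}$. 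By Fadell--Neuwirth the fibre cohomology $H^{*}(F(\CC^{d},k);\FF_{p})$ is concentrated in degrees that are multiples of $2d-1$, which severely restricts the possible source bidegrees; exploiting this degree constraint together with the $\operatorname{GL}_{t}(\FF_{p})$-equivariance of the spectral sequence under the Weyl group $N_{\Sym_{k}}(E)/E\cong\operatorname{GL}_{t}(\FF_{p})$---which fixes the invariant class $D^{d-1}$ but acts non-trivially on any candidate source class of the required bidegree---one rules out that any incoming differential hits $D^{d-1}$. This Weyl-invariance and degree argument, which parallels the Stiefel--Whitney analogue of \cite{blagojevic-luck-ziegler-1} and draws on the Dickson invariant framework of \cite{blagojevic-ziegler-convex}, is the heart of the proof.
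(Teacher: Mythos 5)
Your overall strategy is genuinely different from the paper's, and while the early reductions are sound, the critical step is not established.

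Your setup is correct: the reduction from $\xi^{\CC}$ to $\zeta^{\CC}$, the restriction along $\pi\colon F(\CC^d,k)/E\to F(\CC^d,k)/\Sym_k$ for the regular elementary abelian $p$-subgroup $E\cong(\ZZ/p)^t$, the splitting of $\pi^*\zeta^{\CC}$ into line bundles indexed by nontrivial characters, and the identification of $\prod_{\chi\neq 1}c_1(\widetilde L_\chi)$ in $H^*(\BB E;\FF_p)$ with (a power of a scalar multiple of) the top Dickson invariant, hence nonzero together with its $(d-1)$-st power in the polynomial part. All of this is fine. The gap is the last paragraph: showing that $\phi^*(D^{d-1})\neq 0$, i.e.\ that $D^{d-1}$ survives to $E_\infty$ in the Serre spectral sequence of the Borel fibration $F(\CC^d,k)\to F(\CC^d,k)/E\to\BB E$. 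You observe correctly that $H^*(F(\CC^d,k);\FF_p)$ is concentrated in degrees that are multiples of $2d-1$, so the only possible incoming differentials are $d_r$ with $r\equiv 1\ (\bmod\ 2d-1)$; but many such $r\leq 2(d-1)(k-1)$ exist, and the degree constraint alone does not kill them. The invocation of $\operatorname{GL}_t(\FF_p)$-equivariance is then asserted rather than proved: it is not clear why the Weyl group ``acts non-trivially on any candidate source class of the required bidegree,'' and even granting it, a non-invariant source class can still map onto the invariant class $D^{d-1}$ under a $W$-equivariant differential, since all this implies is $d_r(gx-x)=0$. The usual transfer shortcut ($H^*(F/\Sym_k)\to H^*(F/E)^W$ is iso when $p\nmid|W|$) only applies for $t=1$, because $|\operatorname{GL}_t(\FF_p)|$ is divisible by $p$ as soon as $t\geq 2$. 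In effect this last paragraph is re-deriving the Fadell--Husseini index computation of \cite{blagojevic-luck-ziegler-1}, which is precisely the hard part, and the sketch does not carry it.

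For comparison, the paper avoids all of this by reducing to an already-known real result: by \cite[Thm.\,1.2]{blagojevic-ziegler-convex} and \cite[Lemma\,5.2]{blagojevic-luck-ziegler-1}, the twisted Euler class power $\mathfrak{e}(\zeta^{\RR}_{\RR^{2d},k};\mathcal{Z})^{2d-1}$ is nonzero and, after reduction mod $p$ via \cite[Lemma\,4.2]{blagojevic-ziegler-convex}, remains nonzero. Using the isomorphism of real bundles $2\zeta^{\RR}_{\RR^{2d},k}\cong\zeta^{\CC}_{\CC^d,k}$ and the multiplicativity of twisted Euler classes, one writes
\[
0\neq\mathfrak{e}(\zeta^{\RR};\mathcal{F}_p)^{2d-1}=\mathfrak{e}(\zeta^{\CC};\FF_p)^{d-1}\cdot\mathfrak{e}(\zeta^{\RR};\mathcal{F}_p),
\]
and concludes $\mathfrak{e}(\zeta^{\CC};\FF_p)^{d-1}=c_{k-1}(\zeta^{\CC})^{d-1}\neq 0$ directly. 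This is the shortcut your proposal misses: rather than re-running the Dickson-invariant detection, one can piggyback on the real twisted Euler class computation and split off the complex factor by the doubling relation.
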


\noindent
Observe that, from decomposition \eqref{eq:decomposition} and dimensional reasons,  the following Chern classes vanish
\[
c_{m}((\xi^{\CC}_{\CC^d,k})^{\oplus d-1})=c_{m}((\zeta^{\CC}_{\CC^d,k})^{\oplus d-1})=0
\]
when $m\geq (d-1)(k-1)+1$, and $d\geq1$, $k\geq 2$ arbitrary.

Let $p$ be a prime.
Then every integer $k\geq 1$ can be presented uniquely in the form
\[
k=\beta_1p^{r_1}+\cdots+\beta_ap^{r_a}
\]
where $0\leq r_1<r_2<\cdots<r_a$, and $0<\beta_i<p$ for all $1\leq i\leq a$.
We define the function $\alpha_p\colon\NN\rightarrow\NN$ by:
\[
\alpha_p(k):=\beta_1+\cdots+\beta_a.
\]

\begin{theorem}
\label{theorem : Chern classes-2}
Let $d\geq 1$ and $k\geq 2$ be integers and $p$ be an odd prime.
Then the mod $p$ Chern class
\begin{equation} 
c_{(d-1)(k-\alpha_p(k))}((\xi^{\CC}_{\CC^d,k})^{\oplus d-1})\neq 0\
\end{equation}
does not vanish in $H^{2(d-1)(k-\alpha_p(k))}(F(\CC^d,k)/\Sym_k;\FF_p)$.
\end{theorem}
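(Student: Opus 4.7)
My strategy is to reduce Theorem~\ref{theorem : Chern classes-2} to the prime-power case established in Theorem~\ref{theorem : Chern classes} via a cluster/restriction map, in the spirit of the real-case argument from \cite{blagojevic-luck-ziegler-1}. Write the $p$-adic expansion
\[
k=\beta_1 p^{r_1}+\cdots+\beta_a p^{r_a},\qquad\text{so that}\qquad k-\alpha_p(k)=\sum_{i=1}^a \beta_i(p^{r_i}-1).
\]
Choose $\beta:=\beta_1+\cdots+\beta_a$ pairwise disjoint open balls in $\CC^d$ and use them to construct a continuous map
\[
\phi\colon \prod_{i=1}^{a}\big(F(\CC^d, p^{r_i})/\Sym_{p^{r_i}}\big)^{\beta_i}\longrightarrow F(\CC^d,k)/\Sym_k
\]
by placing a (suitably rescaled) $p^{r_i}$-point configuration into the appropriate ball and forming the unordered union of all $k$ resulting points. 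By construction, $\phi$ factors through the quotient by the Young subgroup $\Gamma:=\prod_{i=1}^a \Sym_{p^{r_i}}^{\beta_i}\subset\Sym_k$.

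Next I would identify the pullback bundle. The key observation is that the $\Sym_k$-representation $\CC^k$, restricted to $\Gamma$, decomposes as the external direct sum of the regular $\Sym_{p^{r_i}}$-representations $\CC^{p^{r_i}}$ (one per cluster). Writing $E:=(\xi^{\CC}_{\CC^d,k})^{\oplus(d-1)}$, $E_i:=(\xi^{\CC}_{\CC^d,p^{r_i}})^{\oplus(d-1)}$, and letting $\pi_{i,j}$ denote projection onto the $(i,j)$-th factor of the product, this gives the external Whitney decomposition
\[
\phi^* E \ \cong\ \bigoplus_{i=1}^a\bigoplus_{j=1}^{\beta_i}\, \pi_{i,j}^*E_i.
\]

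By Whitney multiplicativity, $\phi^* c(E)=\prod_{i,j}\pi_{i,j}^*c(E_i)$. Since each $E_i$ contains the trivial rank-$(d-1)$ subbundle $(\tau_{\CC^d,p^{r_i}}^{\CC})^{\oplus(d-1)}$, its total Chern class terminates in degree $2(d-1)(p^{r_i}-1)$. Therefore, when extracting the degree-$2(d-1)(k-\alpha_p(k))$ part of the K\"unneth expansion of $\phi^* c(E)$, the identity
\[
(d-1)(k-\alpha_p(k))\ =\ \sum_{i=1}^a \beta_i\,(d-1)(p^{r_i}-1)
\]
forces every factor to contribute its own top Chern class, yielding
\[
\phi^*\big(c_{(d-1)(k-\alpha_p(k))}(E)\big)\ =\ \prod_{i,j}\pi_{i,j}^*\big(c_{(d-1)(p^{r_i}-1)}(E_i)\big).
\]

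To conclude, each factor $c_{(d-1)(p^{r_i}-1)}(E_i)=c_{p^{r_i}-1}(\xi^{\CC}_{\CC^d,p^{r_i}})^{d-1}$ is nonzero by Theorem~\ref{theorem : Chern classes} (applied to the prime power $p^{r_i}$). The K\"unneth theorem then ensures the displayed cross product is nonzero in the cohomology of the product, so $\phi^*(c_{(d-1)(k-\alpha_p(k))}(E))\neq 0$, whence the class itself is nonzero. I expect the principal technical obstacle to be verifying the external Whitney decomposition of $\phi^*E$ cleanly as an isomorphism of complex vector bundles, which rests on a careful bookkeeping of the restriction of the regular representation of $\Sym_k$ to the Young subgroup $\Gamma$ together with the natural $\Gamma$-equivariance of $\phi$; once this is in place, the remainder is Whitney multiplicativity, a dimension count, and the K\"unneth formula.
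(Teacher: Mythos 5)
Your proposal is correct and follows essentially the same route as the paper: the cluster map $\phi$ built from $\alpha_p(k)$ disjoint balls is exactly the paper's pullback map $\theta$, the external Whitney decomposition $\phi^*E\cong\bigoplus_{i,j}\pi_{i,j}^*E_i$ is the paper's identification $\theta^*\xi_{\CC^d,k}\cong\prod_{t,u}\xi_{\CC^d,p^{r_t}}$, and the degree count plus K\"unneth plus Theorem~\ref{theorem : Chern classes} complete the argument just as in the paper. Your remark that the top nonvanishing Chern degree of each $E_i$ is $2(d-1)(p^{r_i}-1)$ (because of the trivial rank-$(d-1)$ summand), forcing each K\"unneth factor to be a top class, makes explicit a step the paper leaves to the reader in the ``Observe that\ldots'' remark after Theorem~\ref{theorem : Chern classes}.
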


The previous theorems, as a side results, imply new estimates of the Lusternik--Schnirelmann category of the unordered configuration space $F(\CC^d,k)/\Sym_k$ as well as of the sectional category of the covering $F(\CC^d,k)\longrightarrow F(\CC^d,k)/\Sym_k$.

The \emph{Lusternik--Schnirelmann category} of a space $X$, denoted by $\cat(X)$, is the least
integer $n$ for which there exists an open cover $U_1, U_2,
\ldots, U_{n+1}$ of $X$ such that the inclusions $U_i \to X$ are nullhomotopic.

\noindent 
The \emph{sectional category} $\secat(p)=\secat(E\overset{p}{\to}B)$ of the fibration $F\to E\overset{p}{\to}B$ is the minimal integer $n$ for which $B$ can be covered by $n+1$ open subsets $U_1, U_2,\ldots, U_{n+1}$ with the property that each restriction fibration $F\to p^{-1}(U_i)\to U_i$ admits a section $s_i\colon U_i\to p^{-1}(U_i)$.
Originally, the notion of sectional category was introduced by Schwarz in \cite{schwarz} under the name {\em genus}. 

\smallskip
A direct consequence of Theorem~\ref{theorem : Chern classes-2} is the following estimate of the Lusternik–-Schnirelmann category of the configuration space $F(\CC^d,k)/\Sym_k$.

\begin{corollary}
\label{cor:LS-category}
$\cat (F(\CC^d,k)/\Sym_k)\geq \max\{2(d-1)(k-\alpha_p(k)): p\text{ is a prime}\}$.
\end{corollary}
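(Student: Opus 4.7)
The plan is to combine Theorem~\ref{theorem : Chern classes-2} with the classical cup-length lower bound for Lusternik--Schnirelmann category, i.e.\ $\cat(Y) \ge r$ whenever there exist positive-degree classes $u_1,\dots,u_r \in H^{>0}(Y;\FF_p)$ with $u_1 u_2 \cdots u_r \neq 0$. Fixing an arbitrary prime $p$ and setting $Y := F(\CC^d,k)/\Sym_k$, Theorem~\ref{theorem : Chern classes-2} immediately produces the non-zero class
\[
c \,:=\, c_{(d-1)(k-\alpha_p(k))}\big((\xi^{\CC}_{\CC^d,k})^{\oplus (d-1)}\big) \,\in\, H^{2(d-1)(k-\alpha_p(k))}\big(Y;\FF_p\big).
\]

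The central task is to decompose $c$ as a non-trivial cup-product of $2(d-1)(k-\alpha_p(k))$ positive-degree classes in $H^{*}(Y;\FF_p)$. I would first use the Whitney sum formula to rewrite $c((\xi^{\CC})^{\oplus (d-1)}) = c(\xi^{\CC})^{d-1}$, so that $c$ becomes a homogeneous polynomial in $c_1(\xi^{\CC}),\dots,c_{k-1}(\xi^{\CC})$ of total cohomological degree $2(d-1)(k-\alpha_p(k))$. Exploiting the $p$-adic expansion $k = \beta_1 p^{r_1} + \cdots + \beta_a p^{r_a}$ underlying the definition of $\alpha_p(k)$, I would then isolate a specific monomial of maximum multiplicity whose coefficient modulo $p$ is non-zero, and take the resulting factorisation as the desired cup-product. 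Finally, the maximum over primes $p$ yields the stated bound.

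The main obstacle is achieving the factor of $2$ in the length of the decomposition. A naive expansion via the Whitney formula gives only $(d-1)(k-\alpha_p(k))$ factors, each a Chern class of $\xi^{\CC}$ in an even cohomological degree. Getting twice that many factors forces one to exploit the finer structure of $H^*(Y;\FF_p)$, which carries odd-degree generators coming from $H^*(\Sym_p;\FF_p) = \Lambda[e]\otimes \FF_p[t]$ with $\deg e = 2p-3$ and $\deg t = 2p-2 = \deg \beta(e)$ (cf.\ Proposition~\ref{prop:cohomology_of_S_p}). The plan is to split each Chern-class building block into a pair of odd/even degree factors using the Bockstein relation $t = \partial_p(e)$ combined with multiplicativity, thus doubling the number of factors in the final product. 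Alternatively, one may replace ordinary cup-length by a Toomer-invariant refinement $\wgt(\cdot)$ designed to assign weight at least $2$ to Chern classes of complex bundles pulled back from the classifying space $BU(k)$, so that the refined inequality $\cat(Y) \ge \sum_i \wgt(u_i)$ automatically produces the extra factor of $2$ once a factorisation of $c$ into $(d-1)(k-\alpha_p(k))$ Chern-class factors is in hand.
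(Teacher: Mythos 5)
There is a genuine gap: neither of your two routes can reach the stated bound, and the obstruction is structural, not a matter of execution. Already for $k=p$ an odd prime, Theorem~\ref{theorem:cohomology_of_F(odd,p)} together with Proposition~\ref{prop:cohomology_of_S_p} identifies $H^*(F(\CC^d,p)/\Sym_p;\FF_p)$ with the truncation $H^{\leq(2d-1)(p-1)}(\Sym_p;\FF_p)$ of $\Lambda[e]\otimes\FF_p[t]$, whose generators sit in degrees $2p-3$ and $2p-2$; consequently \emph{every} non-zero cup product of positive-degree classes has at most roughly $d$ factors (at most one of which can be a multiple of $e$, since $e^2=0$), whereas the bound to be proved is $2(d-1)(p-1)$. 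So no cup-length argument, however clever the factorisation, can succeed. Your count of factors is also off: the Whitney formula writes $c_{(d-1)(k-\alpha_p(k))}\bigl((\xi^{\CC}_{\CC^d,k})^{\oplus d-1}\bigr)$ as a sum of monomials $c_{i_1}\cdots c_{i_{d-1}}$ with only $d-1$ factors, not $(d-1)(k-\alpha_p(k))$, and the individual $c_{i_j}$ cannot be split further ($H^2(F(\CC^d,p)/\Sym_p;\FF_p)=0$ for $p\geq 5$). The Bockstein relation $t=\partial_p(e)$ is not a product decomposition of $t$, so it cannot double the number of factors. Finally, a weight-$2$ rule for classes pulled back from $BU(k)$ would at best yield $\cat\geq 2(d-1)$, and no such general rule is available anyway, since $BU(k)$ is simply connected.

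The missing idea is category weight relative to an \emph{aspherical} space. The bundle $\xi^{\CC}_{\CC^d,k}$ is the pullback of $\EE\Sym_k\times_{\Sym_k}\CC^k\longrightarrow\BB\Sym_k$ along the classifying map $F(\CC^d,k)/\Sym_k\longrightarrow\BB\Sym_k$ of the covering $F(\CC^d,k)\longrightarrow F(\CC^d,k)/\Sym_k$, so the class $c_{(d-1)(k-\alpha_p(k))}\bigl((\xi^{\CC}_{\CC^d,k})^{\oplus d-1}\bigr)$ of Theorem~\ref{theorem : Chern classes-2} is the pullback of a class of degree $2(d-1)(k-\alpha_p(k))$ on $\BB\Sym_k$. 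Since $\BB\Sym_k$ is a $K(\Sym_k,1)$, the Rudyak--Strom theorem (this is the content of \cite[Prop.~2.2(3)]{Roth}, which the paper invokes in the neighbouring Corollary~\ref{cor:secat-category}) asserts that a degree-$s$ class on an aspherical space has strict category weight at least $s$, and strict category weight does not decrease under pullback; hence the non-vanishing of this single class already forces
\[
\cat\bigl(F(\CC^d,k)/\Sym_k\bigr)\;\geq\;2(d-1)(k-\alpha_p(k)),
\]
with no factorisation into cup factors required. Taking the maximum over all primes $p$ gives the corollary. You should replace both of your proposed mechanisms by this one.
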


\noindent
This corollary, in some cases, improves the lower bound given by Roth \cite[Thm.\,1.4]{Roth}:
\[
\cat (F(\CC^d,k)/\Sym_k)\geq (2d-1)(k-\alpha_2(k)).
\]
Indeed, for example in the case $k=14$, we have that
\[
(2d-1)(k-\alpha_2(k))=11(2d-1)=22d-11,
\]
while
\[
(2d-2)(k-\alpha_7(k))=12(2d-2)=24d-24.
\]
Note that in this case the bound from the corollary improves the bound given by Roth only for $d$ large enough.

\smallskip
The following new bound for the sectional category of the covering $F(\CC^d,k)\longrightarrow F(\CC^d,k)/\Sym_k$, in the case when $k$ is a power of an odd prime, is a consequence of Theorem~\ref{theorem : Chern classes} combined with \cite[Prop. 2.1(3) and 2.2(3)]{Roth} and \cite[Thm. 8.4]{blagojevic-luck-ziegler-1}.

\begin{corollary}
\label{cor:secat-category}
Let $d\geq 1$ be an integer and $k$ be a power of an odd prime $p$. 
Then
\begin{multline*}
(2d-2)(k-1)\leq\secat\big(F(\CC^d,k)\longrightarrow F(\CC^d,k)/\Sym_k\big)\\
\leq\cat(F(\CC^d,k)/\Sym_k)=(2d-1)(k-1).
\end{multline*}
\end{corollary}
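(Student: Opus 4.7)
The plan is to assemble the corollary from three inputs: a Chern-class lower bound for the sectional category, Schwarz's general inequality $\secat(p)\leq\cat(B)$ for a fibration, and Roth's computation of the Lusternik--Schnirelmann category of $F(\CC^d,k)/\Sym_k$.

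For the lower bound $(2d-2)(k-1)\leq\secat(p)$, where $p\colon F(\CC^d,k)\to F(\CC^d,k)/\Sym_k$, I would first observe that pulling back the regular representation bundle $\xi^{\CC}_{\CC^d,k}$ along $p$ yields a trivial complex $k$-plane bundle: the covering canonically orders the $k$ points and thereby trivializes the permutation action. Consequently every positive-degree Chern class of $\xi^{\CC}_{\CC^d,k}$ lies in the kernel of
\[
p^*\colon H^*(F(\CC^d,k)/\Sym_k;\FF_p)\longrightarrow H^*(F(\CC^d,k);\FF_p).
\]
The key input is a Schwarz-type cohomological estimate, taken from \cite[Prop.~2.1(3) and 2.2(3)]{Roth} together with \cite[Thm.~8.4]{blagojevic-luck-ziegler-1}, saying that for a Chern class of a bundle trivialized by a cover the associated $\secat$-weight is the full real cohomological degree of that class. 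Applying this principle to the class $c_{k-1}(\xi^{\CC}_{\CC^d,k})^{d-1}$, whose nonvanishing in $H^{2(d-1)(k-1)}(F(\CC^d,k)/\Sym_k;\FF_p)$ is exactly Theorem \ref{theorem : Chern classes} (available since $k$ is a power of an odd prime $p$), one obtains
\[
\secat(p)\geq 2(d-1)(k-1) = (2d-2)(k-1).
\]

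The middle inequality $\secat(p)\leq\cat(F(\CC^d,k)/\Sym_k)$ is Schwarz's classical comparison between sectional and Lusternik--Schnirelmann categories of a fibration, so no further work is required here. The remaining equality $\cat(F(\CC^d,k)/\Sym_k) = (2d-1)(k-1)$ is Roth's theorem \cite[Thm.~1.4]{Roth}, used directly as a black box.

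The main obstacle is precisely the cohomological step in the lower bound: one must verify that the secat-weight of a Chern class of a cover-trivialized complex bundle is its full real degree, not merely its complex degree. This "real doubling" is what produces the factor $(2d-2)(k-1)$ instead of the weaker $(d-1)(k-1)$ coming from a naive cuplength count, and it reflects the complex structure on $\xi^{\CC}_{\CC^d,k}$. Once the weight statement is in hand the three pieces combine formally to give the corollary.
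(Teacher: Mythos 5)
Your overall decomposition into three pieces — a cohomological lower bound for $\secat$ via Chern classes, Schwarz's inequality $\secat\leq\cat$, and an input giving the value of $\cat$ — matches the paper's outline, and the first two pieces are handled correctly. In particular your identification of $c_{k-1}(\xi^{\CC}_{\CC^d,k})^{d-1}\neq 0$ (Theorem~\ref{theorem : Chern classes}) as the class driving the lower bound, and your recognition that one must use the full cohomological degree $2(d-1)(k-1)$ of this class rather than its naive cup-length $d-1$, are both right; the cited results of Roth and \cite[Thm.~8.4]{blagojevic-luck-ziegler-1} indeed say that for a $G$-cover classified by $\phi\colon B\to BG$, a nonzero class $\phi^*(u)$ with $u\in H^n(BG)$ forces $\secat\geq n$ (because $\secat\leq n-1$ would factor $\phi$ through the $(n-1)$-dimensional space $B_{n-1}G$ in the Milnor filtration).

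However, there is a genuine gap in the third piece. You invoke Roth's Theorem~1.4 as a black box for the equality $\cat(F(\CC^d,k)/\Sym_k)=(2d-1)(k-1)$, but Roth's theorem, as quoted in this paper, gives only the lower bound $\cat(F(\CC^d,k)/\Sym_k)\geq(2d-1)(k-\alpha_2(k))$. When $k$ is a power of an \emph{odd} prime $p$ — the hypothesis of the corollary — we have $\alpha_p(k)=1$ but $\alpha_2(k)$ is typically strictly larger than $1$ (already for $k=3$ one has $\alpha_2(3)=2$), so Roth's bound is strictly weaker than $(2d-1)(k-1)$. Roth's result therefore cannot close the argument. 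What is actually needed, and what the paper cites instead, is \cite[Thm.~8.4]{blagojevic-luck-ziegler-1}: there the lower bound $\cat\geq(2d-1)(k-1)$ is obtained by showing the twisted-coefficient Euler class $\mathfrak{e}(\zeta^{\RR}_{\RR^{2d},k};\mathcal{F}_p)^{2d-1}$ does not vanish — this is the \emph{odd} power, living in the local coefficient system $\mathcal{F}_p$, not the even power $\mathfrak{e}^{2d-2}=c_{k-1}(\xi^{\CC})^{d-1}$ that you use for the $\secat$ bound — combined with a category-weight argument for classes pulled back from $K(\pi_1,1)$; together with the dimension bound $\cat\leq\dim(F(\CC^d,k)/\Sym_k)=(2d-1)(k-1)$ this yields the equality. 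Without this replacement your proof does not establish $\cat=(2d-1)(k-1)$, and the chain of inequalities is incomplete.

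A small further remark: the phrase "the $\secat$-weight of a Chern class of a cover-trivialized bundle is its full real cohomological degree, not merely its complex degree" is imprecise. The contrast that matters is not "real degree versus complex degree" but "cohomological degree versus cup-length." The class $c_{k-1}(\xi^{\CC})^{d-1}$ lives in degree $2(d-1)(k-1)$; the naive cup-length bound would give only $\secat\geq d-1$. The "Schwarz-type" result improves this to the full degree because the classifying map $B\to B\Sym_k$ cannot factor through a complex of dimension less than $2(d-1)(k-1)$ if its image in cohomology is nonzero in that degree. The improvement factor is thus $2(k-1)$, the degree of $c_{k-1}$, not the factor of $2$ from real versus complex.
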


The problem of estimating  $\secat\big(F(\CC^d,k)\longrightarrow F(\CC^d,k)/\Sym_k\big)$ was extensively studied by Vassiliev~\cite{vassiliev96}, De Concini \& Procesi \& Salvetti~\cite{deConcini-procesi-salvetti}, Arone~\cite{arone2006} and Roth~\cite{Roth}.

\subsection{Proof of Theorem~\ref{theorem : Chern classes}}
Let $\mathcal{Z}:=\ZZ$ and $\mathcal{F}_p:=\FF_p$ be $\Sym_k$-modules with action defined by $\pi\cdot z=(-1)^{\mathrm{sgn}(\pi)}z$ for $\pi\in\Sym_k$ and $z\in \mathcal{Z}$, or $z\in \mathcal{F}_p$. 
According to \cite[Thm.\,1.2]{blagojevic-ziegler-convex} and \cite[Lemma 5.2]{blagojevic-luck-ziegler-1} we know that the Euler class of the vector bundle $(\zeta_{\RR^{2d},k}^{\RR})^{\oplus 2d-1}$ with twisted $\mathcal{Z}$ coefficients
\[
\mathfrak{e}(\zeta_{\RR^{2d},k}^{\RR};\mathcal{Z})^{2d-1}\in H^*(F(\RR^{2d},k)/\Sym_k;\mathcal{Z})=H^*(F(\CC^{d},k)/\Sym_k;\mathcal{Z})
\]
does not vanish.
For details on Euler classes  with twisted coefficients consult for example \cite{greenblatt}.
Further on,  \cite[Lemma 4.2]{blagojevic-ziegler-convex} guarantees that after reduction of coefficients $\mathcal{Z}\rightarrow \mathcal{F}_p$, $z\mapsto z\textrm{ mod }p$, the reduced Euler class 
\[
\mathfrak{e}(\zeta_{\RR^{2d},k}^{\RR};\mathcal{F}_p)^{2d-1}\in H^*(F(\RR^{2d},k)/\Sym_k;\mathcal{F}_p)=H^*(F(\CC^{d},k)/\Sym_k;\mathcal{F}_p)
\]
also does not vanish.

\noindent 
There is an isomorphism of real vector bundles
$2\zeta_{\RR^{2d},k}^{\RR}=\zeta_{\CC^{d},k}^{\CC}$.
The multiplicative property of twisted Euler classes \cite[Thm.\,3.3]{greenblatt} implies that
\[
0 \neq \mathfrak{e}(\zeta_{\RR^{2d},k}^{\RR};\mathcal{F}_p)^{2d-1}
     =     \mathfrak{e}(\zeta_{\RR^{2d},k}^{\RR};\FF_p)^{2d-2}\cdot \mathfrak{e}(\zeta_{\RR^{2d},k}^{\RR};\mathcal{F}_p)\\    
     =     \mathfrak{e}(\zeta_{\CC^{d},k}^{\CC};\FF_p)^{d-1}\cdot \mathfrak{e}(\zeta_{\RR^{2d},k}^{\RR};\mathcal{F}_p).
\]
Consequently, 
\[
\mathfrak{e}(\zeta_{\CC^{d},k}^{\CC};\FF_p)^{d-1}\neq 0.
\]
Now, using the isomorphism 
$\xi_{\CC^d,k}^{\CC}\cong\zeta_{\CC^d,k}^{\CC}\oplus \tau_{\CC^d,k}^{\CC}$
and definition of the top Chern class~\cite[Def., pp.\,158]{milnor-stasheff}, we get that
\[
c_{k-1}(\xi_{\CC^{d},k}^{\CC})^{d-1}=
c_{k-1}(\zeta_{\CC^{d},k}^{\CC})^{d-1}=\mathfrak{e}(\zeta_{\CC^{d},k}^{\CC};\FF_p)^{d-1}\neq 0.
\]
This concludes the proof.

\subsection{Proof of Theorem~\ref{theorem : Chern classes-2}}
Let $k=\beta_1p^{r_1}+\cdots+\beta_ap^{r_a}$ where $0\leq r_1<r_2<\cdots<r_a$, and $0<\beta_i<p$ for all $1\leq i\leq a$.
We construct a morphism of fiber bundles $\prod_{t=1}^{a}\prod_{u=1}^{\beta_t}\xi_{\CC^d,p^{r_t}}$ and $\xi_{\CC^d,k}$
  such that the following commutative square is a pullback diagram: 
 \[
\xymatrix{\prod_{t=1}^{a}\prod_{u=1}^{\beta_t}\xi_{\CC^d,p^{r_t}}\ar[r]^-{\Theta}  \ar[d] 
& \xi_{\CC^d,k}   \ar[d] 
\\
\prod_{t=1}^{a}\prod_{u=1}^{\beta_t}F(\CC^d,p^{r_t})/\Sym_{p^{r_t}}  \ar[r]_-{\theta}   & 
F(\CC^d,k)/{\Sym_k}.
}
\]
Choose embeddings $e_i \colon \CC^d \longrightarrow \CC^d$ for $i = 1,2, \ldots , \alpha_p(k)$ such that their images are pairwise disjoint open $2d$-balls. 
Each embedding $e_i$ induces an embedding on the level of configuration spaces $F(\CC^d,\ell) \longrightarrow F(\CC^d,\ell)$ denoted by the same letter $e_i$ for any natural numbers $\ell$. 
The map $\theta\colon \prod_{t=1}^{a}\prod_{u=1}^{\beta_t}F(\CC^d,p^{r_t})/\Sym_{p^{r_t}} \longrightarrow F(\CC^d,k)$ is given by
\[
(\underline{x_1} ,\ldots , \underline{x_{\alpha_p(k)}})  \mapsto e_1(\underline{x_1}) \times \cdots \times e_{\alpha_p(k)}(\underline{x_{\alpha_p(k)}}).
\]
The map of the total spaces $\Theta$ is then given by
\[
\bigl((\underline{x_1},v_1), \ldots ,(\underline{x_{\alpha_p(k)}},v_{\alpha_p(k)})\bigr)
\mapsto
\bigl(e_1(\underline{x_1}) \times \cdots \times e_{\alpha_p(k)}(\underline{x_{\alpha_p(k)}}), v_1\times \cdots\times  v_{\alpha_p(k)}\bigr).
\]
Therefore, the pullback bundle is indeed a direct product bundle
\[
\theta^*\xi_{\CC^d,k} \cong  \prod_{t=1}^{a}\prod_{u=1}^{\beta_t}\xi_{\CC^d,p^{r_t}}.
\]

\noindent
The naturality of the Chern classes \cite[Lemma 14.2]{milnor-stasheff} combined with the Product theorem \cite[(14.7)]{milnor-stasheff} yields  
\[
\theta^*c(\xi_{\CC^d,k}^{\oplus d-1})=
c\Big(   \prod_{t=1}^{a}\prod_{u=1}^{\beta_t}\xi_{\CC^d,p^{r_t}}^{\oplus d-1}  \Big)=
 \bigtimes_{t=1}^{a}\bigtimes_{u=1}^{\beta_t}c(\xi_{\CC^d,p^{r_t}}^{\oplus d-1})
\]
Therefore,
\[
 \theta^*c_{(d-1)(k-\alpha_p(k))}(\xi_{\CC^d,k}^{\oplus d-1})
 =
  \bigtimes_{t=1}^{a}\bigtimes_{u=1}^{\beta_t}c_{(d-1)(p^{r_t}-1)}(\xi_{\CC^d,p^{r_t}}^{\oplus d-1}).
\]
The K\"unneth formula and Theorem \ref{theorem : Chern classes} imply that
\[
\theta^*c_{(d-1)(k-\alpha_p(k))}(\xi_{\CC^d,k}^{\oplus d-1})\neq 0
\]
and consequently $c_{(d-1)(k-\alpha_p(k))}(\xi_{\CC^d,k}^{\oplus d-1})$ does not vanish.
\section{$k$-regular embeddings over $\CC$}
In this section we introduce the notion of a continuous $k$-regular embedding over $\CC$ and prove the following theorems.

\begin{theorem}\label{theorem_0}
Let $d\geq 1$ be an integer. 
There is no complex $k$-regular embedding $\RR^d\longrightarrow\CC^N$ provided that
\[
N< \tfrac{1}{2}(d(k-\alpha(k))+\alpha(k)).
\]
\end{theorem}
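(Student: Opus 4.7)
The plan is to reduce the complex case to the already-known real case. The key elementary observation is that complex linear independence in $\CC^N$ implies real linear independence when we view $\CC^N \cong \RR^{2N}$ as a real vector space: any real linear relation $\sum_{i=1}^{k} a_i v_i = 0$ with $a_i\in\RR$ is also a complex relation, and complex linear independence of $v_1,\dots,v_k$ forces all $a_i=0$.

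Given a complex $k$-regular embedding $f\colon\RR^d\longrightarrow\CC^N$, I would compose it with the standard real-linear identification $\CC^N\cong\RR^{2N}$ (writing each coordinate as a real and imaginary part). The resulting map $\tilde f\colon\RR^d\longrightarrow\RR^{2N}$ is still continuous and injective on $k$-tuples in the required sense: by the observation above, any $k$ pairwise distinct points of $\RR^d$ are sent to $k$ vectors that are real linearly independent in $\RR^{2N}$. Hence $\tilde f$ is a real $k$-regular embedding $\RR^d\longrightarrow\RR^{2N}$.

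Now I would invoke the previously known lower bound for the real case, namely \cite[Thm.~2.1]{blagojevic-luck-ziegler-2}, cited in the introduction:
\[
d(k-\alpha(k))+\alpha(k)\leq 2N.
\]
Dividing by $2$ yields $N\geq\tfrac{1}{2}\bigl(d(k-\alpha(k))+\alpha(k)\bigr)$, which contradicts the hypothesis $N<\tfrac{1}{2}\bigl(d(k-\alpha(k))+\alpha(k)\bigr)$. Therefore no such complex $k$-regular embedding exists.

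There is really no main obstacle to this argument: the content has already been done in the real case, and the only new ingredient is the trivial but crucial passage from complex to real linear independence. The same strategy should give Theorem~\ref{theorem:Main-02} for complex $\ell$-skew embeddings, using the fact that affine independence of complex affine subspaces implies affine independence of their underlying real affine subspaces. The sharper Chisholm-type bounds (Theorems~\ref{theorem_2} and \ref{theorem:Main-3}) cannot be obtained by this reduction and genuinely require the extended Vassiliev conjecture and the Chern-class computations developed later in the paper.
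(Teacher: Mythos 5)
Your argument is correct and reaches the same bound, but it packages the complex-to-real reduction at a different level than the paper does. You observe directly that any complex $k$-regular embedding $\RR^d\to\CC^N$ is automatically a real $k$-regular embedding $\RR^d\to\RR^{2N}$ (since complex linear independence implies real linear independence after restriction of scalars), and then quote the embedding-level bound \cite[Thm.~2.1]{blagojevic-luck-ziegler-2} as a black box. The paper instead reduces at the vector-bundle level: from Lemma~\ref{lemma:criterion_no_2} it deduces that $\xi^{\CC}_{\RR^d,k}$ has an $(N-k)$-dimensional complex inverse, so $2\xi^{\RR}_{\RR^d,k}$ has a $2(N-k)$-dimensional real inverse and $\xi^{\RR}_{\RR^d,k}$ a $(2N-k)$-dimensional one, and then invokes the nonvanishing of the dual Stiefel--Whitney class $\bar w_{(d-1)(k-\alpha(k))}(\xi^{\RR}_{\RR^d,k})$ (\cite[Thm.~2.13]{blagojevic-luck-ziegler-2}). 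Since Theorem~2.1 of that reference is itself proved from Theorem~2.13, the two arguments are logically equivalent; yours is shorter and more transparent, while the paper's is written to run parallel to the later characteristic-class proofs of Theorems~\ref{theorem_1} and~\ref{theorem_2}. One very minor caveat: the cited Theorem~2.1 is stated for $d\geq 2$, whereas the present statement allows $d=1$; the case $d=1$ is trivial (a complex $k$-regular map forces $N\geq k\geq k/2$), and the paper's SW-class formulation handles it automatically since $\bar w_0=1$, but it is worth noting if you want the quoted theorem to cover the full range.
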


\noindent
This is a consequence of our result on real $k$-regular maps~\cite{blagojevic-luck-ziegler-2}.

\begin{theorem} \label{theorem_1}
Let $d\geq 1$ be an integer, and $p$ be an odd prime.
There is no complex $p$-regular embedding $\RR^d\longrightarrow\CC^N$ provided that
\[
N\leq \lfloor \tfrac {d+1}{2}\rfloor (p-1).
\]
\end{theorem}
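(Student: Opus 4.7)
The plan is to carry the Cohen--Handel equivalence into the complex setting and then extract the lower bound as a nonvanishing of an inverse Chern class of the complex regular representation bundle. Suppose that a complex $p$-regular embedding $\RR^d\longrightarrow\CC^N$ exists. The complex analogue of~\cite{cohen-handel} produces an $\Sym_p$-equivariant map from $F(\RR^d,p)$ to the Stiefel manifold of complex $p$-frames in $\CC^N$, or equivalently exhibits $\xi^{\CC}_{\RR^d,p}$ as a rank $p$ subbundle of the trivial rank $N$ bundle over $F(\RR^d,p)/\Sym_p$. Hence there is a complex vector bundle $\eta$ of complex rank $N-p$ with $\xi^{\CC}_{\RR^d,p}\oplus\eta$ trivial, and the Whitney sum formula yields $\overline{c}(\xi^{\CC}_{\RR^d,p})=c(\eta)$. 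Since $\eta$ has rank $N-p$, this forces
\[
\overline{c}_j(\xi^{\CC}_{\RR^d,p})=0 \qquad \text{for every } j>N-p,
\]
so it suffices to display a nonvanishing inverse Chern class of sufficiently large index.

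I would compute $\overline{c}(\xi^{\CC}_{\RR^d,p})$ modulo $p$ by pullback along the classifying map $F(\RR^d,p)/\Sym_p\longrightarrow\BB\Sym_p$. Proposition~\ref{prop:cohomology_of_S_p} gives $H^{2i}(\BB\Sym_p;\FF_p)=0$ for $1\le i\le p-2$, so $c_i(\xi^{\CC}_{\RR^d,p})=0$ in this range for dimensional reasons, while the splitting $\xi^{\CC}_{\RR^d,p}\cong\zeta^{\CC}_{\RR^d,p}\oplus\tau^{\CC}_{\RR^d,p}$ forces $c_p=0$. Cohen's vanishing theorem~\cite[Thm.\,8.2]{cohen},~\cite[Thm.\,6.1]{blagojevic-luck-ziegler-1}, which one verifies by restriction to the $p$-Sylow subgroup $\ZZ/p\subset\Sym_p$ and the direct character calculation giving total Chern class $1-(t')^{p-1}\in H^*(\BB\ZZ/p;\FF_p)$, identifies $c_{p-1}(\xi^{\CC}_{\RR^d,p})$ with the pullback of a nonzero multiple $\alpha t$ of the polynomial generator $t\in H^{2(p-1)}(\Sym_p;\FF_p)$. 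Thus $c(\xi^{\CC}_{\RR^d,p})=1+\alpha t$ after pullback, and inverting as a geometric series gives
\[
\overline{c}(\xi^{\CC}_{\RR^d,p})=\sum_{k\ge 0}(-\alpha)^k\, t^k.
\]

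For odd $d\ge 3$, Theorem~\ref{theorem:cohomology_of_F(odd,p)} identifies $H^*(F(\RR^d,p)/\Sym_p;\FF_p)$ with the truncation $H^{\le (d-1)(p-1)}(\Sym_p;\FF_p)$, so $t^k$ survives the pullback precisely when $2k(p-1)\le(d-1)(p-1)$, i.e.\ $k\le (d-1)/2$. In particular $\overline{c}_{(d-1)(p-1)/2}(\xi^{\CC}_{\RR^d,p})=(-\alpha)^{(d-1)/2}\,t^{(d-1)/2}\ne 0$, and combined with the first step this yields $N-p\ge (d-1)(p-1)/2$, i.e.\ $N\ge \lfloor(d+1)/2\rfloor(p-1)+1$. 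The cases $d\in\{1,2\}$ reduce to the trivial bound $N\ge p$ required by $p$-regularity, and for even $d\ge 4$ the result follows by restricting the embedding to an affine hyperplane $\RR^{d-1}\subset\RR^d$, since $\lfloor(d+1)/2\rfloor=\lfloor d/2\rfloor$ for even $d$. The main obstacle is the identification of $c_{p-1}(\xi^{\CC}_{\RR^d,p})$ as a nonzero multiple of $t$ modulo $p$ via Cohen's vanishing theorem; once this single Chern class is pinned down, the inversion to a geometric series and the read-off from Theorem~\ref{theorem:cohomology_of_F(odd,p)} are routine.
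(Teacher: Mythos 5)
Your proof is correct and takes essentially the same route as the paper: reduce to nonvanishing of an inverse Chern class of $\xi^{\CC}_{\RR^d,p}$ via the complex Cohen--Handel criterion, determine the total Chern class mod $p$ by restriction to the Sylow subgroup $\ZZ/p\subset\Sym_p$ and the character/Mann--Milgram computation (the paper gets $c(\nu_p^{\CC})=1+t^{p-1}$; your sign $1-t^{p-1}$ is what Wilson's theorem actually gives, but the discrepancy is immaterial since only nonvanishing is used), invert as a geometric series, and read off the last surviving power in the configuration-space cohomology.

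The one place where you diverge is in how you detect nonvanishing of $\overline{c}_M(\xi^{\CC}_{\RR^d,p})$ with $M=\lfloor\tfrac{d-1}{2}\rfloor(p-1)$. The paper works with the intermediate cover $F(\RR^d,p)/(\ZZ/p)$, sets $T=a^*(t)$ with $t\in H^2(\BB\ZZ/p;\FF_p)$, and invokes the precise cutoff $T^i\ne 0\Leftrightarrow i\le\tfrac{(d-1)(p-1)}{2}$ from Cohen's vanishing theorem; this one statement covers every $d\ge1$ in one stroke. You instead work directly over $F(\RR^d,p)/\Sym_p$ and invoke Theorem~\ref{theorem:cohomology_of_F(odd,p)} (the algebra-isomorphism with the truncation $H^{\le(d-1)(p-1)}(\Sym_p;\FF_p)$), which is only stated for odd $d\ge3$, and then handle even $d\ge4$ by restricting the embedding to an affine hyperplane $\RR^{d-1}\subset\RR^d$ (valid because $\lfloor(d+1)/2\rfloor=\lfloor d/2\rfloor$ for even $d$) and $d\in\{1,2\}$ by the trivial dimension count $N\ge p$. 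Both approaches are sound; the paper's single-case vanishing bound is a bit leaner, while your truncation argument is perhaps more transparent at the cost of a parity split. A small attribution quibble: you credit the identification of $c_{p-1}(\xi^{\CC}_{\RR^d,p})$ as a nonzero multiple of $t$ to Cohen's vanishing theorem, but that step is really the Mann--Milgram/character-theoretic computation on $\BB\ZZ/p$ combined with injectivity of restriction on $H^{2(p-1)}(\BB\Sym_p;\FF_p)\cong\FF_p$; Cohen's vanishing theorem enters only when deciding which power of $t$ survives pullback to the finite-dimensional configuration space.
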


\noindent
The theorem is tight in the case $d=1,2$ by Example~\ref{example_1}.

\begin{theorem} \label{theorem_2}
Let $p$ be an odd prime, $k\geq 1$ and $d=p^t$  for $t\geq1$.
There is no complex $k$-regular embedding $\CC^d\longrightarrow\CC^N$ provided that
\[
N\leq d(k-\alpha_p(k))+\alpha_p(k)-1.
\]
\end{theorem}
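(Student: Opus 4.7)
The plan is to use the complex Cohen--Handel reduction to translate the non-existence question into a Chern-class vanishing constraint, and then to combine Theorem \ref{theorem : Chern classes-2} with the height estimate of Theorem \ref{theorem:hight_bound_for_odd} via a Frobenius-type identity that is available only when $d$ is a power of $p$.

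First, I would set up the complex Cohen--Handel reduction: a complex $k$-regular embedding $\CC^d \longrightarrow \CC^N$ yields an $\Sym_k$-equivariant map $F(\CC^d,k) \longrightarrow V_k(\CC^N)$, which is equivalent to realizing $\xi := \xi^{\CC}_{\CC^d,k}$ as a complex subbundle of the trivial bundle $\varepsilon^N$ over $F(\CC^d,k)/\Sym_k$. Choosing a Hermitian metric produces an orthogonal complement $\nu$ of complex rank $N-k$ with $\xi \oplus \nu \cong \varepsilon^N$; passing to mod $p$ Chern classes gives $c(\nu) = \overline{c}(\xi)$, hence the vanishing range
\[
\overline{c}_j(\xi) = 0 \quad\text{for every } j > N-k.
\]
It therefore suffices to exhibit a non-vanishing inverse Chern class $\overline{c}_M(\xi)$ for $M = (d-1)(k-\alpha_p(k))$, since then $M \leq N-k$ would force $N \geq d(k-\alpha_p(k)) + \alpha_p(k)$, contradicting the hypothesis.

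Next, using the splitting $\xi \cong \zeta^{\CC}_{\CC^d,k} \oplus \tau^{\CC}_{\CC^d,k}$ with $\tau^{\CC}_{\CC^d,k}$ trivial we have $c_i(\xi) = 0$ for $i \geq k$, so iterating the Frobenius identity $(a+b)^p = a^p + b^p$ in $\FF_p$ (with $d = p^t$) gives
\[
c(\xi)^d = \bigl(1 + c_1(\xi) + \cdots + c_{k-1}(\xi)\bigr)^{p^t} = 1 + c_1(\xi)^d + c_2(\xi)^d + \cdots + c_{k-1}(\xi)^d.
\]
Dividing by $c(\xi)$ produces
\[
c\bigl(\xi^{\oplus(d-1)}\bigr) = c(\xi)^{d-1} = \overline{c}(\xi)\cdot\bigl(1 + c_1(\xi)^d + c_2(\xi)^d + \cdots + c_{k-1}(\xi)^d\bigr)
\]
in $H^*(F(\CC^d,k)/\Sym_k;\FF_p)$, which at index $M$ reads
\[
c_M\bigl(\xi^{\oplus(d-1)}\bigr) = \overline{c}_M(\xi) + \sum_{i=1}^{k-1} c_i(\xi)^d \cdot \overline{c}_{M-id}(\xi).
\]
The crucial step is to kill the cross-terms on the right. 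Since $F(\CC^d,k) = F(\RR^{2d},k)$ as $\Sym_k$-spaces, Theorem \ref{theorem:hight_bound_for_odd} yields
\[
\h\bigl(H^*(F(\CC^d,k)/\Sym_k;\FF_p)\bigr) \leq \min\{p^s : 2p^s \geq 2d\} = d,
\]
the last equality using $d = p^t$. Each $c_i(\xi)$ with $i \geq 1$ lies in the augmentation ideal and is therefore non-invertible, so $c_i(\xi)^d = 0$ and the identity collapses to $c_M(\xi^{\oplus(d-1)}) = \overline{c}_M(\xi)$ for every $M \geq 0$.

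Finally, taking $M = (d-1)(k-\alpha_p(k))$, Theorem \ref{theorem : Chern classes-2} gives $c_M(\xi^{\oplus(d-1)}) \neq 0$, hence $\overline{c}_M(\xi) \neq 0$, which by the first step forces $N-k \geq (d-1)(k-\alpha_p(k))$, contradicting $N \leq d(k-\alpha_p(k)) + \alpha_p(k) - 1$. The main obstacle, and the reason the hypothesis $d = p^t$ enters doubly, is precisely the collapse step: both the Frobenius identity for $c(\xi)^d$ and the sharp height bound $\h \leq d$ require $d$ to be a power of $p$, and without either of these ingredients the cross-terms $c_i(\xi)^d \cdot \overline{c}_{M-id}(\xi)$ could a priori spoil the clean identification $c_M(\xi^{\oplus(d-1)}) = \overline{c}_M(\xi)$.
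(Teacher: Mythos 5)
Your proof is correct and rests on the same three ingredients as the paper's: the Cohen--Handel criterion (Lemma~\ref{lemma:criterion_no_2}), the Frobenius identity combined with the height bound $\h \leq d$ from Theorem~\ref{theorem:hight_bound_for_odd} to obtain $c(\xi)^{d}=1$ and hence $\overline{c}(\xi)=c(\xi)^{d-1}$, and the Chern-class non-vanishing. The one organizational difference is that you derive the identity $\overline{c}_M(\xi)=c_M(\xi^{\oplus(d-1)})$ for all $k$ and $M$ at once and then quote the already-established Theorem~\ref{theorem : Chern classes-2} directly, whereas the paper first handles $k$ a prime power (Step 1) and then re-runs the pullback/K\"unneth argument for inverse Chern classes (Step 2); your version is a clean streamlining of the same route.
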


\noindent
The theorem is tight in the case $d=1$ again by Example~\ref{example_1}.
In case $k=p$ is an odd prime and $d$ is power of $p$ Theorems~\ref{theorem_1} and \ref{theorem_2} give the same bound.

\smallskip
The following table compares lower bounds of Theorems \ref{theorem_0}, \ref{theorem_1} and \ref{theorem_2} for the existence of complex $k$-regular embedding $\CC^d\longrightarrow\CC^N$.  
\medskip
\begin{center}
{\small
\begin{tabular}{| l | c | c | c |}
\hline
$(d,k,p)$&$N\geq d(k-\alpha(k))+\lceil\tfrac{\alpha(k)}{2}\rceil$&$N\geq d(p-1)+1$&$N\geq d(k-\alpha_p(k))+\alpha_p(k)$\\    
\hline
$(3,3,3)$&  $4$  &  $7$  &  $7$   \\  
\hline
$(3,9,3)$&  $22$  &  --  &  $25$  \\  
\hline
$(3,8,3)$&  $22$  &  --  &  $18$  \\
\hline
$(d,7,7)$&$4d+2$&$6d+1$& --\\
\hline
$(d,17,17)$&$15d+1$&$16d+1$& --\\
\hline
\end{tabular}
}
\end{center}

\smallskip
The next two subsection are adapted from the~\cite[Sec.\,2.1 and 2.2]{blagojevic-luck-ziegler-2}.

\subsection{Definition and first bound}

\begin{definition}
Let $X$ be a topological space and $k\geq 1$ be an integer.
A continuous map $f\colon X \longrightarrow\CC^N$ is a {\em complex $k$-regular embedding} if for every 
$(x_1,\ldots,x_k)\in F(X,k)$ the set of vectors $\{f(x_1),\ldots,f(x_n)\}$ is 
linearly independent in the complex vector space $\CC^N$.
\end{definition}

\begin{example}
\label{example_1}
{\normalfont
The map $f\colon\CC\longrightarrow\CC^{k}$ given by $f(z)=(1,z,z^2,\ldots,z^{k-1})$ is a complex $k$-regular embedding due to the nonvanishing of the Vandermonde determinant at every point of $F(\CC,k)$.
}
\end{example}

The first necessary condition for the existence of a $2k$-regular embeddings over $\CC$ between complex vector spaces is analogous to the 
Boltjanski{\u\i}, Ry{\v{s}}kov \& {\v{S}}a{\v{s}}kin~\cite{boltjanskii-ryskov-saskin} bound given for the classical real case
and proved by a ``dimension count''.

\begin{theorem}
\label{the:BRS}
If there exists a complex $2k$-regular embedding  $f\colon\RR^d\longrightarrow\CC^N$, then 
\[
\frac{d\cdot k}{2} + k \leq N.
\]
\end{theorem}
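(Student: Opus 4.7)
The plan is to adapt the classical dimension-count argument of Boltjanski\u\i, Ry\v skov \& \v Sa\v skin to the complex setting, replacing the real projective construction by a complex projective one in order to exploit the stronger hypothesis of complex linear independence. Specifically, I would fix $k$ pairwise disjoint open balls $B_1, \ldots, B_k \subset \RR^d$ and define the continuous map
\[
\bar\Psi \colon (B_1 \times \cdots \times B_k) \times \mathbb{C}\mathrm{P}^{k-1} \longrightarrow \mathbb{C}\mathrm{P}^{N-1},
\qquad
((y_1,\ldots,y_k), [a_1\!:\!\cdots\!:\!a_k]) \longmapsto \Bigl[\textstyle\sum_{i=1}^k a_i f(y_i)\Bigr].
\]
This is well-defined because complex $k$-regularity (which is implied by complex $2k$-regularity) guarantees that $\sum a_i f(y_i) = 0$ only when all the $a_i$ vanish.

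The central step is to prove that $\bar\Psi$ is injective on the nonempty open subset
\[
W := \{((y_1,\ldots,y_k), [a_1\!:\!\cdots\!:\!a_k]) : a_i \neq 0 \text{ for all } i\}.
\]
Suppose $\bar\Psi((y),[a]) = \bar\Psi((z),[b])$ with both points in $W$, so that $\sum a_i f(y_i) = \lambda \sum b_i f(z_i)$ for some $\lambda \in \CC^{\times}$. Let $S = \{i : y_i = z_i\}$; disjointness of the balls forces $y_i = z_j$ to imply $i = j$, so after rearranging one obtains a $\CC$-linear relation among $2k - |S|$ pairwise distinct points of $\RR^d$. Complex $2k$-regularity makes these $f$-values linearly independent, so every coefficient vanishes. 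If $|S| < k$, any index $i_0 \notin S$ contributes the nonzero coefficient $a_{i_0}$, a contradiction. Hence $y = z$, and the residual relation $\sum (a_i - \lambda b_i) f(y_i) = 0$ together with complex $k$-regularity yields $[a] = [b]$.

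To conclude, since $\bar\Psi|_W$ is a continuous injection from a manifold of real dimension $dk + 2(k-1)$ into a manifold of real dimension $2(N-1)$, invariance of domain forces
\[
dk + 2(k-1) \leq 2(N-1),
\qquad \text{i.e.,} \qquad \tfrac{dk}{2} + k \leq N.
\]
The main obstacle is the injectivity verification on $W$: one has to unwind the overlap patterns between the tuples $(y_i)$ and $(z_j)$ and apply the appropriate piece of the $2k$-regularity hypothesis in each case. The restriction to $W$ (throwing out tuples with a vanishing projective coordinate) is precisely what eliminates the spurious failures of injectivity that would otherwise occur when some coefficient drops out of the linear relation.
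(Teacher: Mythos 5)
Your proof is correct and implements exactly the dimension-count argument the paper has in mind: the paper simply states that the bound is ``proved by a dimension count'' analogous to Boltjanski\u\i--Ry\v skov--\v Sa\v skin without supplying the details, and your map into $\mathbb{C}\mathrm{P}^{N-1}$, the injectivity check on the open stratum $W$ via $2k$-regularity, and the invariance-of-domain conclusion $dk+2(k-1)\leq 2(N-1)$ are precisely that argument carried out in the complex setting.
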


\subsection{A criterion in terms of Chern classes}

Now we derive a criterion for the non-existence of a complex $k$-regular embedding $X\longrightarrow\CC^{N}$
in terms of the mod $p$ Chern classes of the bundle $\xi_{X,k}^{\CC}$ 
where $p$ is an odd prime.

The argument for the real case and Stiefel-Whitney classes appearing in the proof 
of~\cite[Lemma~2.12~(2)]{blagojevic-luck-ziegler-2} carries over to the complex case and 
 Chern classes modulo $p$ and hence we get for the vector bundle
$\xi_{X,k}^{\CC}  \colon F(X,k)\times_{\Sym_k}\CC^k \longrightarrow F(X,k)/\Sym_k$.

\begin{lemma}
\label{lemma:criterion_no_2}
~~
\begin{compactenum}[\em (i)]
\item If there exists a complex $k$-regular embedding  $X\longrightarrow\CC^N$, then the complex vector bundle $\xi_{X,k}^{\CC}$ admits an $(N-k)$-dimensional complex inverse. 
\item If $\overline{c}_{N-k+1}(\xi_{X,k}^{\CC})\neq 0$, then there is no complex $k$-regular embedding  $X\longrightarrow\CC^N$.
\end{compactenum}
\end{lemma}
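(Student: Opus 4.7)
The plan is to prove the two parts in sequence, with part (ii) following essentially formally from part (i) via the standard multiplicativity of Chern classes, so the real content is in (i).

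For part (i), given a complex $k$-regular embedding $f\colon X\longrightarrow \CC^N$, first I would build an $\Sym_k$-equivariant map
\[
\widetilde{f}\colon F(X,k)\longrightarrow \mathrm{Inj}_\CC(\CC^k,\CC^N),
\qquad (x_1,\ldots,x_k)\longmapsto \big(e_i\mapsto f(x_i)\big),
\]
where the symmetric group $\Sym_k$ acts on the source by permuting the labels of the points and on the target by precomposition (permuting the basis vectors $e_1,\ldots,e_k$). The $k$-regularity of $f$ guarantees that $\widetilde{f}(x_1,\ldots,x_k)$ is indeed an injective complex linear map for every configuration. Interpreting an element of $\mathrm{Hom}_\CC(\CC^k,\CC^N)$ as a bundle map from the trivial $\CC^k$-bundle over $F(X,k)$ to the trivial $\CC^N$-bundle and passing to the $\Sym_k$-quotient, I obtain a fiberwise injective bundle map
\[
\Phi\colon \xi_{X,k}^{\CC}=F(X,k)\times_{\Sym_k}\CC^k \longrightarrow F(X,k)/\Sym_k\times \CC^N,
\qquad [(x_1,\ldots,x_k),v]\longmapsto \big([(x_1,\ldots,x_k)],\widetilde{f}(x_1,\ldots,x_k)(v)\big),
\]
which is well-defined on the quotient precisely by the $\Sym_k$-equivariance constructed above. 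Fixing a Hermitian metric on the trivial $\CC^N$-bundle, the complex orthogonal complement of the image of $\Phi$ yields an $(N-k)$-dimensional complex subbundle $\eta$ of the trivial $\CC^N$-bundle with $\xi_{X,k}^{\CC}\oplus\eta$ isomorphic to the trivial complex $N$-bundle. This $\eta$ is the required complex inverse.

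For part (ii) I would argue by contrapositive. Assuming a complex $k$-regular embedding $X\longrightarrow \CC^N$ exists, part (i) produces a complex vector bundle $\eta$ of rank $N-k$ with $\xi_{X,k}^{\CC}\oplus\eta$ trivial. The Whitney product formula gives
\[
c(\xi_{X,k}^{\CC})\cdot c(\eta)=1 \quad\text{in } H^*(F(X,k)/\Sym_k;\FF_p),
\]
so by the definition recalled right before the lemma, $c(\eta)=\overline{c}(\xi_{X,k}^{\CC})$. Since $\eta$ has complex rank $N-k$, its Chern classes vanish in degrees above $N-k$, in particular $c_{N-k+1}(\eta)=0$, i.e.\ $\overline{c}_{N-k+1}(\xi_{X,k}^{\CC})=0$. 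Contraposing gives (ii).

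I do not anticipate a genuine obstacle here: the argument is a direct complex analogue of the Cohen--Handel framework used in the real setting, and both steps are formal once the bundle-theoretic translation of $k$-regularity is set up. The only point requiring a little care is verifying that $\widetilde{f}$ is $\Sym_k$-equivariant with respect to the correct action on $\mathrm{Hom}_\CC(\CC^k,\CC^N)$, so that the quotient bundle map $\Phi$ is well-defined; everything else is standard.
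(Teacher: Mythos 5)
Your proof is correct and follows essentially the same route as the paper, which simply notes that the real-case argument of Cohen--Handel type (fiberwise injective bundle map from $\xi_{X,k}^{\CC}$ into a trivial $\CC^N$-bundle induced by the regular embedding, orthogonal complement as the inverse, then the Whitney product formula for the mod $p$ Chern classes) carries over verbatim from the Stiefel--Whitney setting. Your explicit verification of the $\Sym_k$-equivariance of $\widetilde{f}$ is exactly the point the paper leaves implicit.
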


\subsection{Proof of Theorem~\ref{theorem_0}}
Let  $\RR^d\longrightarrow\CC^N$ be a complex $k$-regular embedding.
According to Lemma~\ref{lemma:criterion_no_2} the complex vector bundle $\xi_{\RR^d,k}^{\CC}$ admits an $(N-k)$-dimensional complex inverse. 
Consequently, the real vector bundle $2\xi_{\RR^d,k}^{\RR}$ admits a $2(N-k)$-dimensional real inverse, and so $\xi_{\RR^d,k}^{\RR}$ admits a $(2N-k)$-dimensional real inverse.
Since, by \cite[Thm. 2.13]{blagojevic-luck-ziegler-2}, $\bar{w}_{(d-1)(k-\alpha(k))}(\xi_{\RR^d,k}^{\RR})\neq 0$ we have that:
\[
2N-k\geq (d-1)(k-\alpha(k))\Longleftrightarrow N\geq \tfrac{1}{2}(d(k-\alpha(k))+\alpha(k)).
\]

\subsection{Proof of Theorem~\ref{theorem_1}}

Let $p$ be a prime and $\ZZ/p$ a subgroup of $\Sym_p$ generated by the cyclic permutation $(123\ldots p)$.
Consider the following bundle maps:
\[
\xymatrix@!C=9em
{
F(\RR^d,p)\times_{\ZZ/p}W_p^{\CC}\ar[r]\ar[d]^{\gamma_{\RR^d,p}^{\CC}}\ar[rdd]|!{[dd];[r]}\hole &
F(\RR^d,p)\times_{\Sym_p}W_p^{\CC}\ar[r]\ar[d]^{\zeta_{\RR^d,p}^{\CC}} &
\EE\Sym_p\times_{\Sym_p}W_p^{\CC}\ar[d]^{\eta_p^{\CC}}\\
F(\RR^d,p)/\ZZ/p\ar[r]^-{r}\ar[rdd]^{a}&
F(\RR^d,p)/\Sym_p\ar[r]^{s}&
\BB\Sym_p\\
&\EE\ZZ/p\times_{\ZZ/p}W_p^{\CC}\ar[d]^{\nu_p^{\CC}}\ar[ruu]|!{[uu];[r]}\hole& \\
&\BB\ZZ/p\ar[ruu]^{b}&
}
\]
Then
\[
\gamma_{\RR^d,p}^{\CC}=r^*\zeta_{\RR^d,p}^{\CC}=(s\circ r)^*\eta_p^{\CC} =(b\circ a)^*\eta_p^{\CC}.
\]
Put
\[M =  \lfloor \tfrac {d+1}{2}\rfloor (p-1)  -p +1 = \lfloor \tfrac {d-1}{2}\rfloor (p-1).
\]
According to Lemma~\ref{lemma:criterion_no_2}, in order to prove the theorem, is suffices to prove
\[
 \overline{c}_M(\xi_{\RR^d,p}^{\CC})\neq 0.
\]
Since  
\[
\overline{c}_M(\xi_{\RR^d,p}^{\CC}) =  \overline{c}_M(\zeta_{\RR^d,p}^{\CC} \oplus \tau^{\CC}_{X,k})
= \overline{c}_M(\zeta_{\RR^d,p}^{\CC})
\] 
and $\overline{c}_M(\gamma_{\RR^d,p}^{\CC})= r^*(\overline{c}_M(r\zeta_{\RR^d,p}^{\CC}))$
it is enough to prove 
\[
\overline{c}_M(\gamma_{\RR^d,p}^{\CC}) \neq 0.
\]
Denote the cohomology of $\BB\ZZ/p$ with coefficients in the field $\FF_p$ by
\[
H^*(\BB\ZZ/p;\FF_p)=\FF_p[t]\otimes \Lambda(e)
\]
where $\deg(e)=1$, $\deg(t)=2$ and $\Lambda(\cdot)$ denotes the exterior algebra.
Moreover, let
\[
T:=a^*(t)\in H^*(F(\RR^d,p)/\ZZ/p;\FF_p),
\]
where by $a$ we have denoted the classifying map $F(\RR^d,p)/\ZZ/p\longrightarrow \BB\ZZ/p$.
It is known from~\cite[Vanishing theorem 8.2]{cohen} and~\cite[Thm.\,6.1]{blagojevic-luck-ziegler-1} that,
\[
T^{i}\neq 0  
\Longleftrightarrow
i\leq\tfrac{(d-1)(p-1)}2.
\] 
Furthermore, from of Mann and Milgram~\cite[(8.1), pp.\,264]{mann-milgram} we know the following total Chern class  
\[
c(\nu_p^{\CC})=(1+t)(1+2t)\cdots(1+(p-1)t)=1+t^{p-1}.
\]
We conclude
\[
c(\gamma_{\RR^d,p}^{\CC})=a^*(c(\nu_p^{\CC}))=1+T^{p-1}.
\]
This implies
\[
\overline{c}(\gamma_{\RR^d,p}^{\CC})=
(1+T^{p-1})^{-1}=
\sum_{m\geq 0}(-1)^m(T^{p-1})^m=
1-T^{p-1}+T^{2(p-1)}-\cdots.
\]
Since $p$ is odd and hence $M$ is even and $M \le \tfrac{(d-1)(p-1)}2$, we conclude $\overline{c}_M(\gamma_{\RR^d,p}^{\CC}) \neq 0$.
This concludes the proof of Theorem~\ref{theorem_1}.

\subsection{Proof of Theorem~\ref{theorem_2}}
The proof is presented in two steps depending on $k$.
 Let $k=\beta_1p^{r_1}+\cdots+\beta_ap^{r_a}$ where $0\leq r_1<r_2<\cdots<r_a$, and $0<\beta_i<p$ for all $1\leq i\leq a$.
Then $\alpha_p(k)=\beta_1+\cdots+\beta_a$.

\subsubsection{Step 1}
Let $k$ be a power of the prime $p$, i.e., $\alpha_p(k)=1$.
By Lemma~\ref{lemma:criterion_no_2} it suffices to prove that
\[
\overline{c}_{(d-1)(k-1)}(\xi_{\CC^d,k}^{\CC})\neq 0.
\]
Since $d=p^t$ then Theorem~\ref{theorem:hight_bound_for_odd} implies that $\h(H^*(F(\CC^d,k)/\Sym_k;\FF_p))\leq d$
for any $k$.
Consequently $c(d\,\xi_{\CC^d,k}^{\CC})=1$, and
\begin{eqnarray*}
\overline{c}(\xi_{\CC^d,k}^{\CC})&=&c((d-1)\,\xi_{\CC^d,k}^{\CC})=(1+c_1+\cdots+c_{k-1})^{d-1}\\
&=&
\sum_{\substack{i_0,\ldots,i_{k-1}\geq 0\\
       i_0+i_1+\cdots+i_{k-1}=d-1}
     }  
{d-1 \choose i_0, i_1, \ldots, i_{k-1}}\,
c_1^{i_1}c_2^{i_2}\cdots c_{k-1}^{i_{k-1}}\\
&=&
c_{k-1}^{d-1}+\sum_{\substack{i_0,\ldots,i_{k-1}\geq 0,\, i_{k-1}\leq d-2\\
       i_0+i_1+\cdots+i_{k-1}=d-1}
     }  
{d-1 \choose i_0, i_1, \ldots, i_{k-1}}\,
c_1^{i_1}c_2^{i_2}\cdots c_{k-1}^{i_{k-1}},
\end{eqnarray*}
where $c_i:=c_i(\xi_{\CC^d,k}^{\CC})$.
Here multinomial coefficients ${d-1 \choose i_0, i_1, \ldots, i_{k-1}}$ are considered modulo $p$.
Thus,
\begin{equation}
\label{eq:dual}
\overline{c}_{(d-1)(k-1)}(\xi_{\CC^d,k}^{\CC})=c_{k-1}^{d-1}\text{  and  }\overline{c}_{m}(\xi_{\CC^d,k}^{\CC})=0
\text{ for all }m\geq (d-1)(k-1)+1.
\end{equation}

\noindent
Now we use the assumption that $k$ is a power of the prime $p$ and then by Theorem~\ref{theorem : Chern classes} the monomial $c_{k-1}^{d-1}\neq 0$ does not vanish. 

\subsubsection{Step 2}
Let $k\geq1$. 
We use the morphism of fiber bundles $\prod_{t=1}^{a}\prod_{u=1}^{\beta_t}\xi_{\CC^d,p^{r_t}}$ and $\xi_{\CC^d,k}$ constructed in the proof of Theorem~\ref{theorem : Chern classes-2}. 
Recall the pullback diagram: 
 \[
\xymatrix{\prod_{t=1}^{a}\prod_{u=1}^{\beta_t}\xi_{\CC^d,p^{r_t}}\ar[r]^-{\Theta}  \ar[d] 
& \xi_{\CC^d,k}   \ar[d] 
\\
\prod_{t=1}^{a}\prod_{u=1}^{\beta_t}F(\CC^d,p^{r_t})/\Sym_{p^{r_t}}  \ar[r]_-{\theta}   & 
F(\CC^d,k)/{\Sym_k}.
}
\]
As we have seen the pullback bundle is a direct product bundle
\[
\theta^*\xi_{\CC^d,k} \cong  \prod_{t=1}^{a}\prod_{u=1}^{\beta_t}\xi_{\CC^d,p^{r_t}}.
\]

\noindent
Again, the naturality of Chern classes \cite[Lemma 14.2]{milnor-stasheff}, now applied on the inverse Chern class, combined with the Product theorem \cite[(14.7)]{milnor-stasheff}  gives  
\[
\theta^*\bar{c}(\xi_{\CC^d,k})=
\bar{c}\Big(   \prod_{t=1}^{a}\prod_{u=1}^{\beta_t}\xi_{\CC^d,p^{r_t}} \Big)=
 \bigtimes_{t=1}^{a}\bigtimes_{u=1}^{\beta_t}\bar{c}(\xi_{\CC^d,p^{r_t}})
\]
Now relation \eqref{eq:dual} from the first part of the proof implies that: 
\[
 \theta^*\bar{c}_{(d-1)(k-\alpha_p(k))}(\xi_{\CC^d,k})
 =
  \bigtimes_{t=1}^{a}\bigtimes_{u=1}^{\beta_t}\bar{c}_{(d-1)(p^{r_t}-1)}(\xi_{\CC^d,p^{r_t}}).
\]
The K\"unneth formula and relation \eqref{eq:dual} imply that
$\theta^*\bar{c}_{(d-1)(k-\alpha_p(k))}(\xi_{\CC^d,k})\neq 0$.
Consequently, 
\begin{equation}
\label{eq:dual_class}
\bar{c}_{(d-1)(k-\alpha_p(k))}(\xi_{\CC^d,k})\neq 0,
\end{equation}
and so Lemma~\ref{lemma:criterion_no_2} concludes the proof of the theorem.

\section{$\ell$-skew embeddings over $\CC$}

In this section we extend the notion of $\ell$-skew embeddings to complex manifolds. 
The $\ell$-skew embeddings over $\RR$ were previously considered by Ghomi \& Tabachnikov~\cite{ghomi-tabachnikov},  
Stojanovi\'c~\cite{stojanovic} and recently in~\cite{blagojevic-luck-ziegler-2}. 

We establish the following Chisholm-like bound for $\ell$-skew embeddings over $\CC$ under assumption that $\ell$ is an odd prime.

\begin{theorem}
  \label{theorem:Main-02}
  Let $d\geq1$ and $\ell\geq 1$ be integers.  
  There is no $\ell$-skew embedding $\CC^d\longrightarrow\CC^{N}$
  for 
  \[N< \tfrac{1}{2}\big( (2^{\gamma(d)+1}-2d-1)(\ell-\alpha(\ell))+2(d+1)\alpha(\ell)-\ell\big)-1, \]
  where  $\gamma(d)=\lfloor\log_2d\rfloor+1$.
\end{theorem}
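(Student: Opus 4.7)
The plan is to reduce the complex statement to the real $\ell$-skew lower bound established in \cite{blagojevic-luck-ziegler-2}, in direct analogy with the strategy used for Theorem~\ref{theorem_0}. The key observation is that, under the standard identifications $\CC^d\cong\RR^{2d}$ and $\CC^N\cong\RR^{2N}$, any complex $\ell$-skew embedding $f\colon\CC^d\to\CC^N$ is automatically a real $\ell$-skew embedding $\RR^{2d}\to\RR^{2N}$. Once this is verified, one simply feeds the real $\ell$-skew bound with parameters $D=2d$ and $M=2N$ and uses the identity $\gamma(2d)=\gamma(d)+1$ to recover the claimed inequality.

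The non-trivial point is to check that complex affine independence of the tangent affine subspaces $V_i:=(\iota\circ df_{y_i})(T_{y_i}\CC^d)$ implies real affine independence after restriction of scalars. Writing $V_i=p_i+U_i\subset\CC^N$ with $U_i$ a complex $d$-dimensional linear subspace, complex affine independence is equivalent to saying that the $\RR$-list
\[
\big\{p_j-p_1,\;i(p_j-p_1)\big\}_{j=2}^{\ell}\;\cup\;\bigcup_{i=1}^{\ell}\text{(a real basis of }U_i\text{)}
\]
is $\RR$-linearly independent, a system of cardinality $2(\ell(d+1)-1)$. Dropping the $\ell-1$ auxiliary vectors $i(p_j-p_1)$ preserves $\RR$-linear independence and leaves $(\ell-1)+2d\ell=\ell(2d+1)-1$ vectors, which matches the maximal possible real dimension of the affine hull of $\ell$ real $2d$-dimensional affine subspaces of $\RR^{2N}$. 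Hence the $V_i$ are real affinely independent, and $f$, viewed as a smooth map $\RR^{2d}\to\RR^{2N}$, is a real $\ell$-skew embedding.

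With this transfer in hand, applying the real $\ell$-skew lower bound of \cite{blagojevic-luck-ziegler-2} with $D=2d$, $M=2N$, using $\gamma(2d)=\lfloor\log_2(2d)\rfloor+1=\gamma(d)+1$ and dividing through by $2$, yields
\[
N\geq \tfrac{1}{2}\big((2^{\gamma(d)+1}-2d-1)(\ell-\alpha(\ell))+2(d+1)\alpha(\ell)-\ell\big)-1,
\]
which is the required bound. The main obstacle I anticipate is the linear-algebra check of the complex-to-real affine-independence transfer sketched above; the rest is a routine substitution into the already-established real $\ell$-skew theorem, exactly parallel to how Theorem~\ref{theorem_0} is extracted from its real counterpart.
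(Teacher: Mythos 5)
Your proposal is correct, but it takes a genuinely different route from the paper. The paper never shows that a complex $\ell$-skew embedding is a real one; instead it stays at the level of vector bundles: by Lemma~\ref{lemma:TK-l-2} a complex $\ell$-skew embedding $\CC^d\to\CC^N$ forces the complex bundle $(d+1)\xi^{\CC}_{\CC^d,\ell}$ to admit an $(N-(d+1)\ell+1)$-dimensional complex inverse; realification turns this into a real inverse of $(2d+2)\xi^{\RR}_{\RR^{2d},\ell}$, one copy of $\xi^{\RR}$ is peeled off, and the non-vanishing of the relevant dual Stiefel--Whitney class of $(2d+1)\xi^{\RR}_{\RR^{2d},\ell}$ from \cite{blagojevic-luck-ziegler-2} gives the inequality. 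You instead perform the reduction geometrically, verifying that complex affine independence of the tangent affine subspaces implies real affine independence after restriction of scalars; your linear-algebra check (complex affine independence is equivalent to $\RR$-independence of the list $\{p_j-p_1,\,i(p_j-p_1)\}\cup\bigcup(\text{real bases of }U_i)$, and deleting the $i(p_j-p_1)$ leaves exactly the list governing real affine independence) is correct, so a complex $\ell$-skew embedding is indeed a real $\ell$-skew embedding $\RR^{2d}\to\RR^{2N}$ and the real theorem applies as a black box. Your approach is more elementary and self-contained modulo the real theorem; the paper's bundle-level argument has the advantage of being uniform with the proofs of Theorems~\ref{theorem:Main-2} and~\ref{theorem:Main-3}, which genuinely need the Chern-class criterion. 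One small caveat: the direct substitution $D=2d$, $M=2N$ into the real bound does not produce the displayed inequality verbatim but a slightly stronger one (the natural output carries $2(d+1)\ell$ rather than $2(d+1)\alpha(\ell)$, and $\ell\geq\alpha(\ell)$); this is harmless, since a stronger lower bound implies the stated one, and the paper's own final step involves the same kind of weakening, but you should phrase the last line as ``yields a bound at least as strong as'' rather than ``yields''.
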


\noindent
This result is a consequence of our bounds for real $\ell$-skew maps~\cite{blagojevic-luck-ziegler-2}.

\begin{theorem}
  \label{theorem:Main-2}
  Let $d\geq1$ be an integers and $\ell\geq 3$ be a prime.  
  There is no complex $\ell$-skew embedding $\CC^d\longrightarrow\CC^{N}$ 
  for \[N\leq (\ell-1)(d+f(d,\ell)+1)+d-1,\] where
  $f(d,\ell):=\max\{k:0\leq k\leq d-1\textrm{ and }\ell \nmid{d+k\choose d} \}$.
\end{theorem}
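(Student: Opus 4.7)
The plan is to follow the Chern-class argument used in the proofs of Theorems~\ref{theorem_1} and~\ref{theorem_2}. First I would establish the complex $\ell$-skew analogue of Lemma~\ref{lemma:criterion_no_2}: if a complex $\ell$-skew embedding $f\colon\CC^d\longrightarrow\CC^N$ exists, then the rank $\ell(d+1)$ complex bundle $(\xi^{\CC}_{\CC^d,\ell})^{\oplus(d+1)}$ over $F(\CC^d,\ell)/\Sym_\ell$ admits a complex inverse of rank $N+1-\ell(d+1)$. The reason is the affine-to-linear trick: lifting $f$ to $\tilde f(y)=(1,f(y))\in\CC^{N+1}$ sends the $\ell$ affine tangent $d$-planes to the $\ell$ linear $(d+1)$-planes spanned by $\{(1,f(y_i)),(0,df_{y_i}e_1),\ldots,(0,df_{y_i}e_d)\}$, and the $\ell$-skew condition is precisely that these $\ell(d+1)$ vectors form a frame in $\CC^{N+1}$. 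Since $T\CC^d$ is canonically trivial, this produces an $\Sym_\ell$-equivariant map $F(\CC^d,\ell)\longrightarrow V^{\CC}_{\ell(d+1)}(\CC^{N+1})$ (with $\Sym_\ell$ permuting the $\ell$ blocks of $d+1$ vectors), which in turn is the data of a complex subbundle embedding of $(\xi^{\CC}_{\CC^d,\ell})^{\oplus(d+1)}$ into the trivial rank $N+1$ bundle. Consequently $\overline{c}_m((\xi^{\CC}_{\CC^d,\ell})^{\oplus(d+1)})=0$ for every $m>N+1-\ell(d+1)$.

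Next, following the template of Theorem~\ref{theorem_1}, I would compute the relevant inverse Chern class by pulling back along $r\colon F(\CC^d,\ell)/\ZZ/\ell\to F(\CC^d,\ell)/\Sym_\ell$ and the classifying map $a\colon F(\CC^d,\ell)/\ZZ/\ell\to\BB\ZZ/\ell$. Writing $\gamma^{\CC}_{\CC^d,\ell}:=r^*\zeta^{\CC}_{\CC^d,\ell}$, the Mann--Milgram formula yields $c(\gamma^{\CC}_{\CC^d,\ell})=1+T^{\ell-1}$ where $T=a^*t$; the decomposition $\xi^{\CC}_{\CC^d,\ell}\cong\zeta^{\CC}_{\CC^d,\ell}\oplus\tau^{\CC}_{\CC^d,\ell}$ with $\tau$ trivial therefore gives
\[
r^*\overline{c}\big((\xi^{\CC}_{\CC^d,\ell})^{\oplus(d+1)}\big)=(1+T^{\ell-1})^{-(d+1)}=\sum_{k\geq 0}(-1)^k\binom{d+k}{d}T^{k(\ell-1)}.
\]
By \cite[Vanishing theorem 8.2]{cohen} and \cite[Thm.\,6.1]{blagojevic-luck-ziegler-1}, applied with real dimension $2d$, one has $T^i\neq 0$ if and only if $i\leq(2d-1)(\ell-1)/2$, so $T^{k(\ell-1)}\neq 0$ exactly for $0\leq k\leq d-1$. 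Within this range the $k$-th summand is nonzero modulo $\ell$ iff $\ell\nmid\binom{d+k}{d}$, so by the very definition of $f(d,\ell)$ the top surviving term has degree $2(\ell-1)f(d,\ell)$ and yields $r^*\overline{c}_{(\ell-1)f(d,\ell)}((\xi^{\CC}_{\CC^d,\ell})^{\oplus(d+1)})\neq 0$; a fortiori $\overline{c}_{(\ell-1)f(d,\ell)}((\xi^{\CC}_{\CC^d,\ell})^{\oplus(d+1)})\neq 0$ in $H^*(F(\CC^d,\ell)/\Sym_\ell;\FF_\ell)$.

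Combining the two steps forces $N+1-\ell(d+1)\geq(\ell-1)f(d,\ell)$, equivalently $N\geq(\ell-1)(d+f(d,\ell)+1)+d$; contrapositively, if $N\leq(\ell-1)(d+f(d,\ell)+1)+d-1$ no complex $\ell$-skew embedding $\CC^d\longrightarrow\CC^N$ can exist. The main obstacle will be the clean formulation of the $\ell$-skew Chern-class criterion in the first step: one must verify that the pointwise condition (the lifted affine planes sitting in direct sum inside $\CC^{N+1}$) genuinely assembles into a complex subbundle of a trivial bundle, and that the block-permutation $\Sym_\ell$-equivariance on the target $\CC^{\ell(d+1)}=(\CC^{d+1})^{\oplus\ell}$ matches the permutation action defining $\xi^{\CC}_{\CC^d,\ell}$ so that the associated bundle is indeed $(\xi^{\CC}_{\CC^d,\ell})^{\oplus(d+1)}$. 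The remaining computation---Mann--Milgram together with the Newton-series expansion of $(1+T^{\ell-1})^{-(d+1)}$, cut off by the Cohen vanishing bound---is then a direct transcription of the arguments already carried out in the proofs of Theorems~\ref{theorem_1} and~\ref{theorem_2}.
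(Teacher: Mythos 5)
Your proposal is correct and follows essentially the same route as the paper: the criterion you sketch via the affine-to-linear lift is exactly Lemma~\ref{lemma:TK-l-2} (which, combined with $T(F(\CC^d,\ell)/\Sym_\ell)\approx d\,\xi^{\CC}_{\CC^d,\ell}$, reduces to the bundle $(d+1)\,\xi^{\CC}_{\CC^d,\ell}$ you obtain directly), and the computation of $\overline{c}$ via pullback to $\BB\ZZ/\ell$, the Mann--Milgram formula $(1+T^{\ell-1})^{-(d+1)}=\sum_k(-1)^k\binom{d+k}{d}T^{k(\ell-1)}$, and the Cohen vanishing bound in real dimension $2d$ is precisely the paper's argument. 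The only cosmetic difference is that you sketch the proof of the skew criterion, whereas the paper cites the real analogue from \cite{blagojevic-luck-ziegler-2}; the arithmetic of the final bound checks out.
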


\noindent
For example, $f(2,3)=0$ while $f(2,\ell)=1$ for all primes $\ell\geq5$.

\begin{theorem}
  \label{theorem:Main-3}
  Let $p$ be an odd prime, $\ell\geq1$ and $d=p^t$ for $t\geq1$.  
  There is no complex $\ell$-skew embedding $\CC^d\longrightarrow\CC^{N}$
  for \[N\leq (d-1)(\ell-\alpha_p(\ell))+(d+1)\ell-2.\]
\end{theorem}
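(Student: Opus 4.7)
The plan is to mirror the proof of Theorem~\ref{theorem_2}, with the regular representation bundle $\xi^{\CC}_{\CC^d,\ell}$ replaced by the bundle that encodes complex $\ell$-skew embeddings. A complex $\ell$-skew embedding $f\colon\CC^d\longrightarrow\CC^N$ defines, at each ordered tuple $(y_1,\ldots,y_\ell)\in F(\CC^d,\ell)$, the complex linear map
\[
W_\ell^{\CC}\oplus(\CC^d)^\ell\longrightarrow\CC^N,\qquad
\bigl((a_i),(v_i)\bigr)\longmapsto\sum_{i=1}^\ell a_i f(y_i)+\sum_{i=1}^\ell df_{y_i}(v_i),
\]
which is injective precisely because the $\ell$ complex affine tangent $d$-planes of $f$ are affinely independent. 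This $\Sym_\ell$-equivariant family descends to a bundle monomorphism
\[
\eta^{\CC}_{\CC^d,\ell}:=\zeta^{\CC}_{\CC^d,\ell}\oplus(\xi^{\CC}_{\CC^d,\ell})^{\oplus d}\;\hookrightarrow\; F(\CC^d,\ell)/\Sym_\ell\times\CC^N,
\]
exhibiting a complex inverse of rank $N-((d+1)\ell-1)$ for the rank-$(\ell-1+d\ell)=(d+1)\ell-1$ bundle $\eta^{\CC}_{\CC^d,\ell}$. Exactly as in Lemma~\ref{lemma:criterion_no_2}, this forces $\bar{c}_{i}(\eta^{\CC}_{\CC^d,\ell})=0$ for every $i\geq N-(d+1)\ell+2$, so it suffices to prove
\[
\bar{c}_{(d-1)(\ell-\alpha_p(\ell))}\bigl(\eta^{\CC}_{\CC^d,\ell}\bigr)\neq 0.
\]

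Next I would reduce this to the regular representation bundle. The decomposition $\xi=\zeta\oplus\tau$ gives $\eta^{\CC}_{\CC^d,\ell}\oplus\tau^{\CC}_{\CC^d,\ell}\cong(\xi^{\CC}_{\CC^d,\ell})^{\oplus(d+1)}$, hence $c(\eta^{\CC}_{\CC^d,\ell})=c(\xi^{\CC}_{\CC^d,\ell})^{d+1}$. Since $F(\CC^d,\ell)/\Sym_\ell=F(\RR^{2d},\ell)/\Sym_\ell$ and $d=p^t$, Theorem~\ref{theorem:hight_bound_for_odd} yields $\h(H^*(F(\CC^d,\ell)/\Sym_\ell;\FF_p))\leq p^t=d$. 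Writing $c:=c(\xi^{\CC}_{\CC^d,\ell})=1+c_1+\cdots+c_{\ell-1}$ and using that Frobenius is a ring endomorphism in characteristic $p$, we obtain
\[
c^{d}=(1+c_1+\cdots+c_{\ell-1})^{p^t}=1+c_1^{p^t}+\cdots+c_{\ell-1}^{p^t}=1,
\]
the last equality because every $c_i$ with $i\geq 1$ is non-invertible and hence has height at most $d=p^t$. Consequently $c(\eta^{\CC}_{\CC^d,\ell})=c^{d+1}=c$ and $\bar{c}(\eta^{\CC}_{\CC^d,\ell})=c^{-1}=c^{d-1}=\bar{c}(\xi^{\CC}_{\CC^d,\ell})$.

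The required non-vanishing therefore reduces to $\bar{c}_{(d-1)(\ell-\alpha_p(\ell))}(\xi^{\CC}_{\CC^d,\ell})\neq 0$, which is precisely relation~\eqref{eq:dual_class} established in Step~2 of the proof of Theorem~\ref{theorem_2}. Combining the three pieces forces $N\geq (d-1)(\ell-\alpha_p(\ell))+(d+1)\ell-1$ whenever a complex $\ell$-skew embedding $\CC^d\longrightarrow\CC^N$ exists, which is the contrapositive of Theorem~\ref{theorem:Main-3}.

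The main obstacle I anticipate is the first step: installing the Chern-class criterion rigorously. One must verify that the pointwise injective linear map above assembles into an $\Sym_\ell$-equivariant bundle monomorphism with domain $\zeta^{\CC}_{\CC^d,\ell}\oplus(\xi^{\CC}_{\CC^d,\ell})^{\oplus d}$; this is the complex analog of the real construction in~\cite{blagojevic-luck-ziegler-2}, where the rank bookkeeping $(\ell-1)+d\ell=(d+1)\ell-1$ reflects passing from linear to affine independence of the $\ell$ tangent $d$-planes. Once the criterion is in place, the height bound combined with the Frobenius identity reduces everything to a Chern-class computation already carried out in the proof of Theorem~\ref{theorem_2}.
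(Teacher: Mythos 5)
Your proof is correct and follows essentially the same route as the paper: the paper's Lemma~\ref{lemma:TK-l-2} together with $T(F(\CC^d,\ell)/\Sym_\ell)\cong d\,\xi^{\CC}_{\CC^d,\ell}$ reduces everything to $\overline{c}\bigl((d+1)\xi^{\CC}_{\CC^d,\ell}\bigr)$, the height bound from Theorem~\ref{theorem:hight_bound_for_odd} kills $c(\xi)^{d}$, and relation~\eqref{eq:dual_class} supplies the nonvanishing. Your bundle $\zeta^{\CC}_{\CC^d,\ell}\oplus(\xi^{\CC}_{\CC^d,\ell})^{\oplus d}$ differs from the paper's $T(F(\CC^d,\ell)/\Sym_\ell)\oplus\xi^{\CC}_{\CC^d,\ell}$ only by a trivial line bundle, so the Chern-class bookkeeping is identical.
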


\subsection{Definition and the first bound}

The complex affine subspaces $L_1,\ldots,L_{\ell}$ of the complex vector space $\CC^N$ are
\emph{affinely independent} if the affine span of their union has affine complex dimension  $(\dimaff L_1+1)+\cdots+(\dimaff L_{\ell}+1)-1$.  

For a complex $d$-dimensional manifold $M$ we denote by $TM$ the complex tangent bundle 
of $M$ and by $T_yM$ the corresponding tangent space to $M$ at the point $y\in M$.

\begin{definition}[Skew embedding over $\CC$]\label{def:skew_map}
  Let $\ell\geq 1$ be an integer, and $M$ be a smooth complex $d$-dimensional manifold.  
  A smooth complex embedding $f \colon M\longrightarrow\CC^N$ is an \emph{$\ell$-skew embedding over $\CC$} if for every $(y_1,\ldots,y_{\ell})\in F(M,\ell)$
  the complex affine subspaces
  \[
  (\iota\circ df_{y_1})(T_{y_1}M),\ldots ,(\iota\circ df_{y_{\ell}})(T_{y_{\ell}}M)
  \]
  of $\CC^N$ are affinely independent.
\end{definition}

\noindent
Here $df\colon TM\longrightarrow T\CC^N$ denotes the complex differential map between tangent complex vector bundles induced by $f$, and 
\begin{equation}
\label{iota}
\iota \colon T\CC^N  \longrightarrow \CC^N
\end{equation}
sends a tangent vector $v \in T_x\CC^N$ at $x \in \CC^N$
to  $x +v$ where we use the standard identification $T_x\CC^N = \CC^N$.

The first bound for the existence of an complex $\ell$-skew embedding $f \colon M\longrightarrow\CC^N$  we get by dimension count as in~\cite[Lemma 3.3]{blagojevic-luck-ziegler-2}.
\begin{lemma}
Let $\ell\geq 1$ be an integer, $M$ be a smooth complex $d$-dimensional manifold, and $f \colon M\longrightarrow\RR^N$ be a complex $\ell$-skew embedding.
Then $(d+1)\ell-1\leq N$.
\end{lemma}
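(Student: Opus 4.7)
The plan is to obtain the bound by a pure dimension count, directly from the definition of affine independence in the statement preceding the lemma. Pick any $\ell$ distinct points $y_1,\ldots,y_{\ell}\in M$, and set $L_i := (\iota\circ df_{y_i})(T_{y_i}M)\subset \CC^N$. Because $f$ is a smooth complex embedding, each complex differential $df_{y_i}\colon T_{y_i}M\longrightarrow T_{f(y_i)}\CC^N$ is $\CC$-linearly injective, and $\iota$ restricted to the tangent space at $f(y_i)$ is an affine isomorphism onto its image. Hence each $L_i$ is a complex affine subspace of $\CC^N$ with $\dimaff L_i = d$.

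Next, I would invoke the $\ell$-skew hypothesis: the subspaces $L_1,\ldots,L_\ell$ are affinely independent in $\CC^N$. By the definition recalled in the paper, this means that the affine span $\aff(L_1\cup\cdots\cup L_\ell)$ has complex affine dimension exactly
\[
\sum_{i=1}^{\ell}(\dimaff L_i+1)-1 \;=\; \ell(d+1)-1.
\]
Since this affine span is contained in the ambient space $\CC^N$, whose complex affine dimension is $N$, monotonicity of affine dimension under inclusion yields $\ell(d+1)-1\leq N$, which is exactly the claimed inequality.

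There is essentially no obstacle here beyond unwinding the definitions; the only point worth verifying carefully is that $df_{y_i}$ is injective (so that $\dimaff L_i = d$ and not smaller), which is immediate from $f$ being an embedding. The existence of at least $\ell$ distinct points in $M$ requires $M$ to be nonempty and $\ell\geq 1$, which is assumed; if $\ell=1$ the inequality reads $d\leq N$ and follows from $f$ being an embedding alone.
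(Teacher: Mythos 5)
Your proof is correct and takes exactly the approach the paper intends: the paper gives no argument of its own but says the bound follows ``by dimension count'' as in the cited real case, and your unwinding of the definitions — each $L_i$ is a $d$-dimensional complex affine subspace because $df_{y_i}$ is injective, affine independence forces the span to have complex affine dimension $\ell(d+1)-1$, and this span sits inside $\CC^N$ — is precisely that dimension count. The only thing worth flagging is that you correctly noticed the argument needs $F(M,\ell)$ to be nonempty, which holds since a nonempty complex manifold of dimension $d\geq 1$ has infinitely many points.
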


\subsection{A topological criterion}

Following~\cite[Section 3.2]{blagojevic-luck-ziegler-2} we derive a necessary condition for the existence of a complex $\ell$-skew embedding.

Let $M$ be a smooth complex $d$-dimensional manifold.  The symmetric group $\Sym_{\ell}$
acts naturally on the configuration space $F(M,\ell)$ and its tangent bundle $TF(M,\ell)$
which is a complex vector bundle.  
The argument for the real case and Stiefel-Whitney classes appearing in the proof 
of~\cite[Lemma~3.4]{blagojevic-luck-ziegler-2} carries over to the complex case and 
Chern classes modulo $p$ and hence we get for the vector bundle
$\xi_{M,\ell}^{\CC}  \colon F(X,\ell)\times_{\Sym_\ell}\CC^{\ell} \longrightarrow F(X,\ell)/\Sym_\ell$.

\begin{lemma}
  \label{lemma:TK-l-2}
  Let $d,\ell\geq 1$ be integers and let $M$ be a smooth complex $d$-dimensional manifold. 
  \begin{compactenum}[\rm (i)]
  \item If there exists a complex $\ell$-skew embedding  $M\longrightarrow\CC^N$, then the complex vector bundle
            $T(F(M,\ell)/\Sym_{\ell})\oplus\xi_{M,\ell}^{\CC}$
            over the unordered configuration space $F(M,\ell)/\Sym_{\ell}$ admits an $(N-(d+1)\ell+1)$-dimensional complex inverse.
  \item If the inverse Chern class 
    \[
     \overline{c}_{N - (d+1)\ell +2}\bigl(T(F(M,\ell)/\Sym_{\ell})\oplus\xi_{M,\ell}^{\CC}\bigr)
     \] 
     does not  vanish, then there is no complex $\ell$-skew embedding $M\longrightarrow\CC^{N}$.
  \end{compactenum}
  
\end{lemma}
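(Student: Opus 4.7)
The plan is to transfer, essentially verbatim, the argument for the real case given in \cite[Lemma~3.4]{blagojevic-luck-ziegler-2}, replacing real bundles and Stiefel--Whitney classes by their complex analogues and mod-$p$ Chern classes. For part (i), I would use the $\ell$-skew embedding $f\colon M\longrightarrow \CC^N$ to build an explicit $\Sym_\ell$-equivariant complex linear bundle map from $TF(M,\ell)\oplus(F(M,\ell)\times\CC^\ell)$ to the trivial bundle $F(M,\ell)\times\CC^{N+1}$. Over a configuration $(y_1,\ldots,y_\ell)\in F(M,\ell)$ I would send $((v_1,\ldots,v_\ell),(a_1,\ldots,a_\ell))$ to
\[
 \Bigl(\,\sum_{i=1}^\ell\bigl(a_i f(y_i) + df_{y_i}(v_i)\bigr),\ \sum_{i=1}^\ell a_i\,\Bigr)\in\CC^{N+1}.
\]
The symmetric group permutes the $y_i$, $v_i$ and $a_i$ simultaneously while acting trivially on the codomain, so this descends to a bundle morphism
\[
 T(F(M,\ell)/\Sym_\ell)\oplus\xi_{M,\ell}^{\CC}\longrightarrow F(M,\ell)/\Sym_\ell\times\CC^{N+1}.
\]

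Next I would verify that this morphism is fiberwise injective precisely because $f$ is $\ell$-skew. The linearization trick $\widetilde{L}_i:=\mathrm{span}_{\CC}\{(x,1):x\in L_i\}\subset\CC^{N+1}$ turns affine independence of the $\ell$ complex affine subspaces $L_i=(\iota\circ df_{y_i})(T_{y_i}M)$ into the statement that $\widetilde{L}_1,\ldots,\widetilde{L}_\ell$ are in direct sum inside $\CC^{N+1}$, which is exactly the injectivity of the displayed fiber map at the configuration $[y_1,\ldots,y_\ell]$. Once we have a bundle monomorphism, endowing the trivial bundle with any Hermitian metric produces an orthogonal complement of rank $(N+1)-(d+1)\ell=N-(d+1)\ell+1$; this is the required complex inverse.

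Part (ii) is a formal consequence. If $\eta$ is a complex inverse of rank $N-(d+1)\ell+1$, the Whitney sum formula gives
\[
 c(\eta)\cdot c\bigl(T(F(M,\ell)/\Sym_\ell)\oplus\xi_{M,\ell}^{\CC}\bigr)=1
\]
in cohomology, so $c(\eta)=\overline{c}\bigl(T(F(M,\ell)/\Sym_\ell)\oplus\xi_{M,\ell}^{\CC}\bigr)$, and all Chern classes of $\eta$ in degrees above its rank vanish. In particular the degree $N-(d+1)\ell+2$ component of $\overline{c}\bigl(T(F(M,\ell)/\Sym_\ell)\oplus\xi_{M,\ell}^{\CC}\bigr)$ is zero, which is the contrapositive of the claim.

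The only genuinely non-mechanical step is the equivalence between affine independence in Definition~\ref{def:skew_map} and injectivity of the fiber map; this is where the augmentation by the extra $\CC$-coordinate (carrying the $\sum a_i$) and the $\Sym_\ell$-equivariant identification of the fiber of $T(F(M,\ell)/\Sym_\ell)\oplus\xi_{M,\ell}^{\CC}$ with $\bigoplus_i T_{y_i}M\oplus\CC^\ell$ must be set up carefully. Everything else is a direct complex-linear translation of the real-case argument, with Hermitian metrics playing the role of Euclidean ones when splitting off the complementary bundle.
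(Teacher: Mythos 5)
Your proposal is correct and follows essentially the same route as the paper, which simply states that the proof of the real case in \cite[Lemma~3.4]{blagojevic-luck-ziegler-2} carries over; the map $((v_i),(a_i))\mapsto\bigl(\sum_i(a_if(y_i)+df_{y_i}(v_i)),\sum_i a_i\bigr)$, the linearization of affine independence via the hyperplane at height $1$ in $\CC^{N+1}$, the Hermitian splitting, and the formal Whitney-sum argument for (ii) are exactly that carried-over argument.
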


\subsection{Proof of theorem~\ref{theorem:Main-02}}
Let  $\CC^d\longrightarrow\CC^N$ be a complex $\ell$-skew embedding.
Then, by Lemma~\ref{lemma:TK-l-2} the complex vector bundle 
\[
T(F(\CC^d,\ell)/\Sym_{\ell})\oplus\xi_{\CC^d,\ell}^{\CC}\cong (d+1)\xi_{\CC^d,\ell}^{\CC}
\] 
admits an $(N-(d+1)\ell+1)$-dimensional complex inverse. 
Thus, the real vector bundle $2(d+1)\xi_{\RR^{2d},\ell}^{\RR}$ admits a $2(N-(d+1)\ell+1)$-dimensional real inverse.
Consequently, $(2d+1)\xi_{\RR^{2d},\ell}^{\RR}$ admits a $(2N-2(d+1)\ell+2+\ell)$-dimensional real inverse.
Now,  by \cite[Thm.\,3.7]{blagojevic-luck-ziegler-2}, $\bar{w}_{(2^{\gamma(2d)}-2d-1)(\ell-\alpha(\ell))}(\xi_{\RR^{2d},\ell}^{\RR})\neq 0$ and so:
\begin{multline*}
2N-2(d+1)\ell+2+\ell\geq (2^{\gamma(2d)}-2d-1)(\ell-\alpha(\ell))\Longleftrightarrow \\
N\geq \tfrac{1}{2}\big( (2^{\gamma(d)+1}-2d-1)(\ell-\alpha(\ell))+2(d+1)\alpha(\ell)-\ell\big)-1.
\end{multline*}

\subsection{Proof of theorem~\ref{theorem:Main-2}}
To prove the theorem, according to Lemma~\ref{lemma:TK-l-2}, it suffices to show that the inverse Chern class
\[
\overline{c}_{(\ell-1)f(d,\ell)}\bigl(T(F(\CC^d,\ell)/\Sym_{\ell})\oplus\xi_{\CC^d,\ell}^{\CC}\bigr).
\]
does not vanish. 
Since $T(F(\CC^d,\ell)/\Sym_{\ell})\approx d\,\xi_{\CC^d,\ell}^{\CC}$ we need to prove that
\[
\overline{c}_{(\ell-1)f(d,\ell)}((d+1)\,\xi_{\CC^d,\ell}^{\CC})\neq 0.
\]
Indeed, recall from the proof of Theorem~\ref{theorem_1} that
\[
\overline{c}(\gamma_{\CC^d,\ell}^{\CC})=
r^*\overline{c}(\zeta_{\CC^d,\ell}^{\CC})=
r^*\overline{c}(\xi_{\CC^d,\ell}^{\CC})
\,\textrm{ and }\,
\overline{c}(\gamma_{\CC^d,\ell}^{\CC})=
(1+T^{\ell-1})^{-1}.
\]
Then 
\[
\overline{c}((d+1)\,\gamma_{\CC^d,\ell}^{\CC})=
(1+T^{\ell-1})^{-(d+1)}=
\sum_{k=0}^{d-1}{d+k\choose d}T^{(\ell-1)k}.
\]
Consequently,  $\overline{c}_{(\ell-1)f(d,\ell)}((d+1)\,\gamma_{\CC^d,\ell}^{\CC})\neq 0$
and so  $\overline{c}_{(\ell-1)f(d,\ell)}((d+1)\,\xi_{\CC^d,\ell}^{\CC})\neq 0$.

\subsection{Proof of theorem~\ref{theorem:Main-3}}
Like in the proof of Theorem~\ref{theorem:Main-2}, from Lemma~\ref{lemma:TK-l-2} and observation
$T(F(\CC^d,\ell)/\Sym_{\ell})\approx d\,\xi_{\CC^d,\ell}^{\CC}$,  the relevant  inverse Chern class to study is $\overline{c}\big((d+1)\,\xi_{\CC^d,\ell}^{\CC}\big)$.

\noindent
Since by an assumption $d=p^t,$ as in the proof of Theorem~\ref{theorem_2}, we have that:
\[
c(d\,\xi_{\CC^d,\ell}^{\CC})=1\quad \Longrightarrow \quad\overline{c}(d\,\xi_{\CC^d,\ell}^{\CC})=1 
\quad\Longrightarrow \quad\overline{c}((d+1)\,\xi_{\CC^d,\ell}^{\CC})=\overline{c}(\xi_{\CC^d,\ell}^{\CC}).
\]
The relation \eqref{eq:dual_class} implies that
\[
\overline{c}_{(d-1)(\ell-\alpha_p(\ell))}((d+1)\,\xi_{\CC^d,\ell}^{\CC})=\overline{c}_{(d-1)(\ell-\alpha_p(\ell))}(\xi_{\CC^d,\ell}^{\CC})\neq 0.
\]
Now from Lemma~\ref{lemma:TK-l-2} we conclude that there can not be any $\ell$-skew embedding $\CC^d\longrightarrow\CC^N$ over $\CC$ for $N\leq (d-1)(\ell-\alpha_p(\ell))+(d+1)\ell-2$.



\begin{thebibliography}{10}
\itemsep=0pt

\bibitem{araki-kudo}
{\sc S. Araki, T. Kudo}, 
{\em Topology of $H_n$-spaces and $H$-squaring operations},
Mem. Fac. Sci. Kyūsyū Univ. Ser. A. 10 (1956), pp.~85--120. 



\bibitem{arkowitz}
{\sc M.~Arkowitz}, 
{\em Introduction to homotopy theory}, 
Universitext. Springer, 2011, xiv+344 pp.


\bibitem{arone2006}
{\sc G.~Arone}, 
{\em A note on the homology of $\Sigma_n$, the {S}chwartz genus, and solving polynomial equations},
in ``An Alpine Anthology of Homotopy Theory'' (Proc. Arolla Conf. Algebraic Topology), Contemporary Math., 399 (2006),
pp.~1--10.

 
 \bibitem{blagojevic-ziegler-convex}
 {\sc P. V. M. Blagojevi\'{c}, G. M. Ziegler},
 {\em Convex equipartitions via equivariant obstruction theory},
 Israel J. Math. 200 (2014), pp.~49--77.

 \bibitem{blagojevic-luck-ziegler-1}
 {\sc P.~V.~M. Blagojevi\'c, W.~L\"uck, G.~M. Ziegler},
 {\em Equivariant Topology of Configuration Spaces},
 J. Topol. 8 (2015), pp.~414--456. 

 \bibitem{blagojevic-luck-ziegler-2}
 {\sc P.~V.~M. Blagojevi\'c, W.~L\"uck, G.~M. Ziegler},
 {\em On highly regular embeddings}, 
 Trans. Amer. Math. Soc., electronically published May 6, 2015;
 \href{http://arxiv.org/abs/1305.7483}{\texttt{arXiv:1305.7483}}.
 

\bibitem{boltjanskii-ryskov-saskin}
{\sc V. G. Boltjanski{\u\i}, S. S. Ry{\v{s}}kov, Ju. A. {\v{S}}a{\v{s}}kin},
{\em On $k$-regular imbeddings and their application to the theory of approximation of functions},
Amer. Math. Soc. Transl. 28 (1963), pp.~211--219.

\bibitem{borsuk}
{\sc K. Borsuk}, 
{\em On the $k$-independent subsets of the Euclidean space and of the Hilbert space},
Bull. Acad. Polon. Sci. Cl. III. 5 (1957), pp.~351--356.

\bibitem{buczynski et al}
{\sc J. Buczy{\'n}ski, T. Januszkiewicz, J. Jelisiejew, M. Micha{\l}ek},
{\em Constructions of $k$-regular maps and finite local schemes},
In preparation, 2014.

\bibitem{chisholm}
{\sc M.~E. Chisholm},
{\em $k$-regular mappings of $2^n$-dimensional euclidean space},
Proc. Amer. Math. Soc., 74 (1979), pp.~187--190.

\bibitem{cohen}
{\sc F.~R. Cohen},
{\em The homology of $C_{n+1}$-spaces, $n\geq 0$}, 
 in ``The Homology of Iterated Loop Spaces'', vol.~533 of Lecture Notes in Math., 1976, Springer, pp.~207--351.

\bibitem{cohen: course1987}
{\sc F.~R. Cohen},
{\em A course in some aspects of classical homotopy theory},
in ``Algebraic topology (Seattle, Wash., 1985)'', vol.~1286 of Lecture Notes in Math., 1987, Springer, pp.~1--92.

\bibitem{cohen: 1995}
{\sc F.~R. Cohen},
{\em On configuration spaces, their homology, and Lie algebras},
J. Pure Appl. Algebra 100 (1995), pp.~19--42. 

\bibitem{cohen-cohen-kuhn-neisendorfer}
{\sc F.~R.~Cohen, R.~L.~Cohen, N.~J.~Kuhn, J.~L.~Neisendorfer},
{\em Bundles over configuration spaces},
Pacific J. Math. 104 (1983), pp.~47--54.

\bibitem{cohen-handel}
{\sc F.~R. Cohen, D.~Handel},
{\em $k$-regular embeddings of the plane},
Proc. Amer. Math. Soc., 72 (1978), pp.~201--204.

\bibitem{cohen - may - taylor - II}
{\sc F.~R. Cohen, J.~P. May, L.~R. Taylor},
{\em Splitting of certain spaces $CX$},
Math. Proc. Cambridge Philos. Soc. 84 (1978), pp.~465--496. 


\bibitem{cohen - may - taylor}
{\sc F.~R. Cohen, J.~P. May, L.~R. Taylor},
{\em Splitting of some more spaces},
Math. Proc. Cambridge Philos. Soc. 86 (1979), pp.~227--236. 

\bibitem{cohen - neisendorfer}
{\sc F.~R.~Cohen, J.~A.~Neisendorfer},
{\em A construction of $p$-local $H$-spaces},
Algebraic topology, Aarhus 1982,
Lecture Notes in Math., 1051, Springer, 1984, pp.~351--359. 

\bibitem{deConcini-procesi-salvetti} 
{\sc C. De Concini, C. Procesi, M. Salvetti}, 
{\em On the equation of degree $6$},
Comment. Math. Helv. 79 (2004), pp.~605--617.

\bibitem{dyer-lahof}
{\sc E.~Dyer, R.~K.~Lashof},
{\em Homology of Iterated Loop Spaces},
Amer. Journal of Math. 84 (1962), pp.~35--88.
 
\bibitem{felix-halperin-thomas}
{\sc Y. F\'elix, S. Halperin, J-C. Thomas},
{\em The Serre spectral sequence of a multiplicative fibration},
Trans. Amer. Math. Soc. 353 (2001), pp.~3803--3831.

 
 \bibitem{ghomi-tabachnikov}
 {\sc M. Ghomi, S. Tabachnikov},
 {\em Totally skew embeddings of manifolds},
 Math. Z. 258 (2008), pp.~499--512.

\bibitem{greenblatt}
{\sc R.~Greenblatt}, 
{\em Homology with local coefficients and characteristic classes}, 
Homology, Homotopy and Applications, 8 (2006), pp.~91--103.
 
\bibitem{handel}
{\sc D.~Handel},
{\em Some existence and nonexistence theorems for $k$-regular maps},
Fund. Math. 109 (1980), pp.~229--233.
 
 
 \bibitem{handel96}
 {\sc D.~Handel},
 {\em $2k$-regular maps on smooth manifolds},
 Proc. Amer. Math. Soc. 124 (1996), pp.~1609--1613.
 
 
\bibitem{handel-segal}
{\sc D.~Handel, J.~Segal},
{\em On k-regular embeddings of spaces in Euclidean space},
Fund. Math. 106 (1980), pp.~231--237.

 
\bibitem{husemoller}
{\sc D.~Husemoller},
{\em Fibre bundles},
 Third edition, Graduate Texts in Mathematics, 20. Springer-Verlag, New York, 1994.

\bibitem{kahn - priddy}
{\sc D. S. Kahn, S. B. Priddy},
{\em The transfer and stable homotopy theory},
Math. Proc. Cambridge Philos. Soc. 83 (1978), pp.~103--111. 
 

\bibitem{mann-milgram}
{\sc B.~Mann, J.~Milgram},
{\em On the Chern classes of the regular representations of some finite groups},
Proc. Edinb. Math. Soc.,\ II.\ Ser. 25 (1982), pp.~259--268.


\bibitem{may}  
{\sc P.  May}, 
{\em The geometry of iterated loop spaces}, 
Lectures Notes in Mathematics, Vol.~271, 1972, Springer.

\bibitem{may533}
{\sc P.  May}, 
{\em The homology of $E_{\infty}$ spaces}, 
Lectures Notes in Mathematics, Vol.~533, 1976, Springer, pp.~1--68.


\bibitem{milnor - moore}
{\sc J. Milnor, J. Moore},
{\em On the structure of Hopf algebras},
Ann.\ of Math.\ (2) 81 (1965), pp.~211--264. 
 
 \bibitem{milnor-stasheff}
 {\sc J.~Milnor and J.~D. Stasheff},
 {\em Characteristic Classes},
 vol.~76 of Annals of Mathematics Studies, Princeton University Press, Princeton, 1974.
 

\bibitem{neisendorfer}
{\sc J.~Neisendorfer}, 
{\em Algebraic methods in unstable homotopy theory}, 
New Mathematical Monographs, 12. Cambridge University Press, Cambridge, 2010.


\bibitem{Roth}
{\sc F.~Roth}, 
{\em On the category of euclidean configuration spaces and associated fibrations}, 
in ``Groups, Homotopy and Configuration Spaces'',
vol.~13 of Geometry \& Topology Monographs, 2008, pp.~447--461.


\bibitem{segal}
{\sc G. Segal}, 
{\em Configuration-spaces and iterated loop-spaces}, 
Invent. Math. 21 (1973), pp.~213--221. 

\bibitem{serre}
{\sc J-P. Serre}, 
{\em Groupes d'homotopie et classes de groupes abeliens}, 
Ann.\ of Math.\ 58 (1953), pp.~258--294.

 \bibitem{stojanovic}
 {\sc G. Stojanovi\'c},
 {\em Embeddings with multiple regularity},
 Geom. Dedicata 123 (2006), pp.~1--10.
 
 \bibitem{schwarz}
{\sc A.S. Schwarz}, 
{\em Genus of a Fibre Bundle}, 
Transact. Moscow Math. Soc. 10 (1961), pp.~217--272.

%
\bibitem{vassiliev92}
{\sc V. Vassiliev},
{\em Spaces of functions that interpolate at any $k$-points},
(Russian) Funktsional. Anal\. i Prilozhen. 26 (1992), pp.~72--74;
translation in Funct. Anal. Appl. 26 (1992), pp.~209--210.

\bibitem{vassiliev92book}
{\sc V. Vassiliev},
{\em Complements of discriminants of smooth maps: topology and applications},
Translations of Mathematical Monographs, 98. Amer. Math. Soc., 1992. 

\bibitem{vassiliev96}
{\sc V. Vassiliev},
{\em Topological complexity of root-finding algorithms},
The mathematics of numerical analysis (Park City, UT, 1995), 831--856, 
Lectures in Appl. Math., 32, Amer. Math. Soc., 1996.

\bibitem{willington}
{\sc R. J. Wellington},
{\em The Unstable Adams Spectral Sequence for Free Iterated Loop Spaces},
Memoirs Amer. Math. Soc. 256, 1982.

\end{thebibliography}
\end{document}